\numberwithin{equation}{section}
\theoremstyle{plain}
\newtheorem{thm}{Theorem}[section]
\newtheorem{cor}[thm]{Corollary}
\newtheorem{lem}[thm]{Lemma}
\newtheorem{prop}[thm]{Proposition}
\theoremstyle{definition}
\newtheorem{defn}[thm]{Definition}
\newtheorem{rem}[thm]{Remark}
\numberwithin{equation}{section}
\def\A{{\mathcal A}}
\def\Re{\operatorname{Re}}
\def\beq{\begin{eqnarray}}
	\def\eeq{\end{eqnarray}}
\def\beqa{\begin{eqnarray*}}
	\def\eeqa{\end{eqnarray*}}
\def\beqn{\begin{equation}}
	\def\eeqn{\end{equation}}
\def\mg#1{}
\renewcommand{\epsilon}{\varepsilon}
\renewcommand{\phi}{\varphi}
\renewcommand{\bf}[1]{\textbf{#1}}
\renewcommand{\it}[1]{\textit{#1}}
\renewcommand{\sf}[1]{\textsf{#1}}
\renewcommand{\Re}[1]{\sf{Re}(#1)}
\numberwithin{equation}{section}
\setlist[enumerate]{font=\upshape,noitemsep, topsep=0pt} 
\setlist[itemize]{noitemsep, topsep=0pt}
\begin{document}
	
	\title[Canonical Decompositions and Conditional Dilations]{Canonical Decompositions and Conditional Dilations of $\Gamma_{E(3; 3; 1, 1, 1)}$-Contraction and $\Gamma_{E(3; 2; 1, 2)}$-Contraction}
	\author{Dinesh Kumar Keshari, Avijit Pal and Bhaskar Paul}
	
	\address[ D. K. Keshari]{School of Mathematical Sciences, National Institute of Science Education and Research Bhubaneswar, An OCC of Homi Bhabha National Institute, Jatni, Khurda,  Odisha-752050, India}
\email{dinesh@niser.ac.in}

\address[A. Pal]{Department of Mathematics, IIT Bhilai, 6th Lane Road, Jevra, Chhattisgarh 491002}
\email{A. Pal:avijit@iitbhilai.ac.in}

\address[B. Paul]{Department of Mathematics, IIT Bhilai, 6th Lane Road, Jevra, Chhattisgarh 491002}
\email{B. Paul:bhaskarpaul@iitbhilai.ac.in }

\subjclass[2010]{47A15, 47A20, 47A25, 47A45.}

\keywords{$\Gamma_{E(3; 3; 1, 1, 1)} $-contraction, $\Gamma_{E(3; 2; 1, 2)} $-contraction, Spectral set, Complete spectral set, $\Gamma_{E(3; 3; 1, 1, 1)}$-unitary,  $\Gamma_{E(3; 2; 1, 2)}$-unitary, Beurling-Lax-Halmos theorem , $\Gamma_{E(3; 3; 1, 1, 1)}$-isometry, $\Gamma_{E(3; 2; 1, 2)}$-isometry, Completely non-unitary contraction}
	\maketitle
	
	\begin{abstract}

A $7$-tuple of commuting bounded operators $\mathbf{T} = (T_1, \dots, T_7)$ defined on a Hilbert space $\mathcal{H}$ is said to be a \textit{$\Gamma_{E(3; 3; 1, 1, 1)}$-contraction} if $\Gamma_{E(3; 3; 1, 1, 1)}$ is a spectral set for $\mathbf{T}$. Let $(S_1, S_2, S_3)$ and $(\tilde{S}_1, \tilde{S}_2)$ be tuples of commuting bounded operators on $\mathcal{H}$ satisfying $S_i \tilde{S}_j = \tilde{S}_j S_i$ for $1 \leq i \leq 3$ and $1 \leq j \leq 2$. The tuple $\mathbf{S} = (S_1, S_2, S_3, \tilde{S}_1, \tilde{S}_2)$ is called a \textit{$\Gamma_{E(3; 2; 1, 2)}$-contraction} if $\Gamma_{E(3; 2; 1, 2)}$ is a spectral set for $\mathbf{S}$.

In this paper, we establish the existence and uniqueness of the fundamental operators associated with $\Gamma_{E(3; 3; 1, 1, 1)}$-contractions and $\Gamma_{E(3; 2; 1, 2)}$-contractions. Furthermore, we obtain a Beurling-Lax-Halmos type representation for invariant subspaces corresponding to a pure $\Gamma_{E(3; 3; 1, 1, 1)}$-isometry and a pure $\Gamma_{E(3; 2; 1, 2)}$-isometry.

We also construct a conditional dilation for a $\Gamma_{E(3; 3; 1, 1, 1)}$-contraction and  a $\Gamma_{E(3; 2; 1, 2)}$-contraction and develop an explicit functional model for a certain subclass of these operator tuples. Finally, we demonstrate that every $\Gamma_{E(3; 3; 1, 1, 1)}$-contraction (respectively, $\Gamma_{E(3; 2; 1, 2)}$-contraction) admits a unique decomposition as a direct sum of a $\Gamma_{E(3; 3; 1, 1, 1)}$-unitary (respectively, $\Gamma_{E(3; 2; 1, 2)}$-unitary) and a completely non-unitary $\Gamma_{E(3; 3; 1, 1, 1)}$-contraction (respectively, $\Gamma_{E(3; 2; 1, 2)}$-contraction).

\end{abstract}
	
	\section{Introduction}
Let $\Omega$ be a compact subset of $\mathbb C ^m,$ and let $\mathcal O(\Omega)$ be the algebra of holomorphic functions defined on an open set that contains $\Omega.$ Let $\mathbf{T}=(T_1,\ldots,T_m)$ be a  $m$-tuple of commuting bounded operators on some Hilbert space $\mathcal H$. The joint spectrum of $\mathbf {T} $ is denoted by  $\sigma(\mathbf T)$. Let $\rho_{\mathbf T}:\mathcal O(\Omega)\rightarrow\mathcal B(\mathcal H)$ be a map defined as folows   $$1\to I~{\rm{and}}~z_i\to T_i~{\rm{for}}~1\leq i\leq m. $$ It is evident that $\rho_{\mathbf T}$ is a homomorphism. A compact set $\Omega\subset \mathbb C^m$ is said to be a spectral set for a $m$-tuple of commuting bounded operators $\mathbf{T}=(T_1,\ldots,T_m)$ if $\sigma(\mathbf T)\subseteq \Omega$ and the  homomorphism $\rho_{\mathbf T}:\mathcal O(\Omega)\rightarrow\mathcal B(\mathcal H)$ is contractive. The closed unit disc serves as the spectral set for every contraction defined on a  Hilbert space $\mathcal H$, as established by the following theorem [Corollary 1.2, \cite{paulsen}].
\begin{thm} 
Let $T\in \mathcal B(\mathcal H)$ be a contraction. Then
$$\|p(T)\|\leq \|p\|_{\infty, \bar {\mathbb D}}:=\sup\{|p(z)|: |z|\leq1\} $$ for every polynomial $p.$
\end{thm}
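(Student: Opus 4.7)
The plan is to prove this via Sz.-Nagy's unitary dilation theorem, which is the cleanest route to von Neumann's inequality. The idea is to pass from the contraction $T$ to a unitary operator $U$ acting on a larger space, on which the spectral theorem reduces the operator norm estimate to a scalar sup estimate on the unit circle.

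First, I would invoke Sz.-Nagy's dilation theorem: there exists a Hilbert space $\mathcal{K} \supseteq \mathcal{H}$ and a unitary operator $U \in \mathcal{B}(\mathcal{K})$ such that
\[
T^n = P_{\mathcal{H}} U^n \big|_{\mathcal{H}} \quad \text{for all } n \geq 0,
\]
where $P_{\mathcal{H}}$ denotes the orthogonal projection from $\mathcal{K}$ onto $\mathcal{H}$. Because this power-dilation property is linear in $n$, it extends to any polynomial, so that $p(T) = P_{\mathcal{H}} \, p(U) \big|_{\mathcal{H}}$ for every polynomial $p$. Consequently $\|p(T)\| \leq \|p(U)\|$. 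If a self-contained construction is desired, one can sketch the classical Schäffer matrix model: set $D_T = (I - T^*T)^{1/2}$ and $D_{T^*} = (I - TT^*)^{1/2}$, take $\mathcal{K} = \bigoplus_{n \in \mathbb{Z}} \mathcal{H}$, and define $U$ in block form so that the $(0,0)$ block is $T$ and the defect operators appear on the sub- and super-diagonals in a way that makes $U$ both an isometry and a co-isometry.

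Second, since $U$ is unitary, the continuous functional calculus gives
\[
\|p(U)\| = \sup_{\lambda \in \sigma(U)} |p(\lambda)|.
\]
The spectrum $\sigma(U)$ lies on the unit circle $\mathbb{T}$, and by the maximum modulus principle $|p(\lambda)| \leq \sup_{|z| \leq 1} |p(z)| = \|p\|_{\infty, \bar{\mathbb{D}}}$ for every $\lambda \in \mathbb{T}$. Combining with the previous step yields
\[
\|p(T)\| \leq \|p(U)\| \leq \|p\|_{\infty, \bar{\mathbb{D}}},
\]
which is the desired inequality.

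The only substantive step is the existence of the unitary dilation; once that is in hand, the rest is routine functional calculus and a maximum-modulus argument. The main obstacle, therefore, is the Schäffer-type construction (or an equivalent approach via positive-definite kernels), specifically verifying that the block operator defined above is indeed unitary and that it dilates powers of $T$. All other steps are standard applications of the spectral theorem for normal operators and elementary complex analysis, so no further difficulty is anticipated.
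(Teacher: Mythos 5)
Your argument is correct and follows exactly the route the paper itself indicates: it does not prove this theorem but cites it as von Neumann's inequality from \cite{paulsen}, remarking immediately afterwards that ``the spectral theorem for unitary operators and the existence of power dilation for contractions ensure the validity of the von Neumann inequality,'' which is precisely your Sz.-Nagy dilation plus functional-calculus argument. No gaps; the passage from the power-dilation identity to $p(T)=P_{\mathcal H}\,p(U)|_{\mathcal H}$ and the maximum-modulus step are both sound.
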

The following theorem presented here is a refined version of the Sz.-Nagy dilation theorem [Theorem 1.1, \cite{paulsen}].
\begin{thm} Let $T\in \mathcal B(\mathcal H)$ be a contraction. Then there exists a larger Hilbert space $\mathcal K$ that contains $\mathcal H$ as a subspace, and a unitary operator $U$ acting on a Hilbert space $\mathcal K \supseteq \mathcal H$ with the property that $\mathcal K$ is the smallest closed reducing subspace for $U$ containing $\mathcal H$ such that
$$P_\mathcal H\,U^n_{|\mathcal H}=T^n, ~{\rm{ for ~all}} ~n\in \mathbb N\cup \{0\}.$$
\end{thm}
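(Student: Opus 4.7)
The plan is to construct $U$ explicitly via Schäffer's unitary dilation on a canonical bilateral direct sum of defect spaces around $\mathcal{H}$, and then pass to the minimal reducing subspace. First I would form the defect operators $D_T := (I - T^*T)^{1/2}$ and $D_{T^*} := (I - T T^*)^{1/2}$, which are well-defined positive operators since $T$ is a contraction, and let $\mathcal{D}_T := \overline{\operatorname{Ran}(D_T)}$ and $\mathcal{D}_{T^*} := \overline{\operatorname{Ran}(D_{T^*})}$ be the defect spaces inside $\mathcal{H}$. The key algebraic input is the identity $T(I - T^*T) = (I - T T^*)T$, which, via the continuous functional calculus for positive operators (approximating $t \mapsto t^{1/2}$ by polynomials on the spectrum), yields the intertwining relations $T D_T = D_{T^*} T$ and $D_T T^* = T^* D_{T^*}$; in particular $T(\mathcal{D}_T) \subseteq \mathcal{D}_{T^*}$ and $T^*(\mathcal{D}_{T^*}) \subseteq \mathcal{D}_T$.

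Next, I would set $\mathcal{K}_0 := \Big(\bigoplus_{n \leq -1} \mathcal{D}_{T^*}\Big) \oplus \mathcal{H} \oplus \Big(\bigoplus_{n \geq 1} \mathcal{D}_T\Big)$, identifying $\mathcal{H}$ as the coordinate-$0$ summand, and define $U_0 : \mathcal{K}_0 \to \mathcal{K}_0$ to act as a bilateral shift away from the central coordinates, with the transition from positions $\{-1,0\}$ of the input into positions $\{0,1\}$ of the output given by the block operator
\[
M := \begin{pmatrix} D_{T^*} & T \\ -T^* & D_T \end{pmatrix} : \mathcal{D}_{T^*} \oplus \mathcal{H} \longrightarrow \mathcal{H} \oplus \mathcal{D}_T.
\]
Unitarity of $M$ reduces to checking the four block-entries of $M^* M$ and $M M^*$: the diagonal identities $D_T^2 + T^* T = I$ and $D_{T^*}^2 + T T^* = I$ are immediate from the definitions, and the off-diagonal identities are precisely $D_{T^*} T = T D_T$ and $T^* D_{T^*} = D_T T^*$ from the previous step. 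Since the shifts on the negative and positive tails are manifestly unitary, $U_0$ is a unitary on $\mathcal{K}_0$.

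For the dilation identity I would induct on $n$: if $h \in \mathcal{H} \hookrightarrow \mathcal{K}_0$ (zero in all non-central coordinates), then $U_0 h$ equals $T h$ at coordinate $0$ and $D_T h$ at coordinate $1$, and further applications of $U_0$ push the defect tail rightwards while keeping $T^n h$ at coordinate $0$, so $P_{\mathcal{H}} U_0^n|_{\mathcal{H}} = T^n$ for every $n \geq 0$. To obtain minimality, I would finally define
\[
\mathcal{K} := \overline{\operatorname{span}}\{\,U_0^n h : n \in \mathbb{Z},\, h \in \mathcal{H}\,\}, \qquad U := U_0|_{\mathcal{K}},
\]
which is by construction the smallest closed reducing subspace of $\mathcal{K}_0$ for $U_0$ containing $\mathcal{H}$; the restricted $U$ is still unitary (because $\mathcal{K}$ reduces $U_0$) and inherits the dilation identity. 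The main obstacle in this plan is the step from the algebraic identity $T(I - T^*T) = (I - TT^*)T$ to its square-root version $T D_T = D_{T^*} T$, which is not purely algebraic and genuinely uses the functional calculus for positive operators; once this intertwining is in hand, every remaining verification is a mechanical block computation.
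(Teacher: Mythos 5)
Your proposal is correct: the intertwining $TD_T=D_{T^*}T$ via functional calculus, the unitarity of the Julia-type block $M$, the staircase action of $U_0$ giving $P_{\mathcal H}U_0^n|_{\mathcal H}=T^n$, and the passage to the smallest reducing subspace are all sound. The paper does not prove this theorem at all --- it quotes it from Paulsen's book and merely remarks afterwards that ``Schaffer constructed a unitary dilation for a contraction $T$'' --- and your argument is precisely that standard Sch\"affer construction, so it matches the route the paper implicitly relies on.
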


Schaffer constructed  a unitary dilation for a contraction $T$. The spectral theorem for unitary operators and the existence of power dilation for contractions ensure the validity of the von Neumann inequality. Let $\Omega$ be a compact subset of $\mathbb C ^m. $ Let $F=\left(\!(f_{ij})\!\right)$ be a matrix-valued polynomial defined on $\Omega.$ We say that $\Omega$ is a complete spectral set (complete $\Omega$-contraction) for $\mathbf T$ if $\|F(\mathbf T) \| \leq \|F\|_{\infty, \Omega}$ for every $F\in \mathcal O(\Omega)\otimes \mathcal M_{k\times k}(\mathbb C), k\geq 1$.  We say that $\Omega$ possesses property $P$ if the following holds: if $\Omega$ is a spectral set for a commuting $m$-tuple of operators $\mathbf{T}$, then it is a complete spectral set for $\mathbf{T}$. We say that a $m$-tuple of  commuting bounded operators $\mathbf{T}$ with $\Omega$ as a spectral set, have a $\partial \Omega$ normal dilation if there is a Hilbert space $\mathcal K$ that contains $\mathcal H$ as a subspace, along with a commuting $m$-tuple of normal operators $\mathbf{N}=(N_1,\ldots,N_m)$ on $\mathcal K$ whose joint spectrum lies in $\partial \Omega$, and$$P_{\mathcal H}F(\mathbf N)\mid_{\mathcal H}=F(\mathbf T) ~{\rm{for~ all~}} F\in \mathcal O(\Omega).$$

In 1969, Arveson \cite{A,AW} demonstrated that a commuting $m$-tuple of operators $\mathbf{T}$ admits a $\partial \Omega$ normal dilation if and only if $\Omega$ is a spectral set for $\mathbf{T}$ and $\mathbf{T}$ satisfies the property $P.$ J. Agler \cite{agler} proved in 1984 that the annulus possesses property $P$. M. Dristchell and S. McCullough \cite{michel} showed that property $P$ does not hold for domains in the complex plane $\mathbb{C}$ with connectivity $n\geq 2$. In the multivariable setting, both the bidisc and the symmetrized bidisc are known to possess property $P$, as established by Agler and Young \cite{young} and Ando \cite{paulsen}, respectively. Parrott \cite{paulsen} provided the counterexample for $\mathbb D^n$ when $n > 2$. G. Misra \cite{GM,sastry}, V. Paulsen \cite{vern}, and E. Ricard \cite{pisier} demonstrated that no ball in $\mathbb{C}^m$, with respect to some norm $\|\cdot\|_{\Omega}$ and for $m \geq 3$, can have property $P$. It is further shown in \cite{cv} that if $B_1$ and $B_2$ are not simultaneously diagonalizable via unitary, the set $\Omega_{\mathbf B}:= \{(z_1,z_2) :\|z_1 B_1 + z_2 B_2 \|_{\rm op} < 1\}$ fails to have property $P$, where $\mathbf B=(B_1, B_2)$ in $\mathbb C^2 \otimes \mathcal M_2(\mathbb C)$ with $B_1$ and $B_2$ are linearly independent.  

Let $\mathcal M_{n\times n}(\mathbb{C})$ be the set of all $n\times n$ complex matrices and  $E$ be a linear subspace of $\mathcal M_{n\times n}(\mathbb{C}).$ We define the function $\mu_{E}: \mathcal M_{n\times n}(\mathbb{C}) \to [0,\infty)$ as follows:
\begin{equation}\label{mu}
\mu_{E}(A):=\frac{1}{\inf\{\|X\|: \,\ \det(1-AX)=0,\,\, X\in E\}},\;\; A\in \mathcal M_{n\times n}(\mathbb{C})
	\end{equation}
with the understanding that $\mu_{E}(A):=0$ if $1-AX$ is  nonsingular for all $X\in E$ \cite{ds, jcd}.   Here $\|\cdot\|$ denotes the operator norm. Let  $E(n;s;r_{1},\dots,r_{s})\subset \mathcal M_{n\times n}(\mathbb{C})$ be the vector subspace comprising block diagonal matrices, defined as follows:
\begin{equation}\label{ls}
    	E=E(n;s;r_{1},...,r_{s}):=\{\operatorname{diag}[z_{1}I_{r_{1}},....,z_{s}I_{r_{s}}]\in \mathcal M_{n\times n}(\mathbb{C}): z_{1},...,z_{s}\in \mathbb{C}\},
\end{equation}
 where $\sum_{i=1}^{s}r_i=n.$ We recall the definition of $\Gamma_{E(3; 3; 1, 1, 1)}$, $\Gamma_{E(3; 2; 1, 2)}$ and $\Gamma_{E(2; 2; 1, 1)}$ \cite{Abouhajar,Bharali, apal1}. The sets $\Gamma_{E{(2;2;1,1)}}$, $\Gamma_{E(3; 3; 1, 1, 1)}$ and $\Gamma_{E(3; 2; 1, 2)}$ are defined as 
 \begin{equation*}
\begin{aligned}
\Gamma_{E{(2;2;1,1)}}:=\Big \{\textbf{x}=(x_1=a_{11}, x_2=a_{22}, x_3=a_{11}a_{22}-a_{12}a_{21}=\det A)\in \mathbb C^3: &\\A\in \mathcal M_{2\times 2}(\mathbb C)~{\rm{and}}~\mu_{E(2;2;1,1)}(A)\leq 1\Big \},
\end{aligned}
\end{equation*}	 
 \begin{equation*}
\begin{aligned}
\Gamma_{E{(3;3;1,1,1)}}:=\Big \{\textbf{x}=(x_1=a_{11}, x_2=a_{22}, x_3=a_{11}a_{22}-a_{12}a_{21}, x_4=a_{33}, x_5=a_{11}a_{33}-a_{13}a_{31},&\\ x_6=a_{22}a_{33}-a_{23}a_{32},x_7=\det A)\in \mathbb C^7: A\in \mathcal M_{3\times 3}(\mathbb C)~{\rm{and}}~\mu_{E(3;3;1,1,1)}(A)\leq 1\Big \}
\end{aligned}
\end{equation*}	
$${\rm{and}}$$  
\begin{equation*}
\begin{aligned}
\Gamma_{E(3;2;1,2)}:=\Big\{( x_1=a_{11},x_2=\det \left(\begin{smallmatrix} a_{11} & a_{12}\\
					a_{21} & a_{22}
				\end{smallmatrix}\right)+\det \left(\begin{smallmatrix}
					a_{11} & a_{13}\\
					a_{31} & a_{33}
				\end{smallmatrix}\right),x_3=\operatorname{det}A, y_1=a_{22}+a_{33}, &\\ y_2=\det  \left(\begin{smallmatrix}
					a_{22} & a_{23}\\
					a_{32} & a_{33}\end{smallmatrix})\right)\in \mathbb C^5
:A\in \mathcal M_{3\times 3}(\mathbb C)~{\rm{and}}~\mu_{E(3;2;1,2)}(A)\leq 1\Big\}.
\end{aligned}
\end{equation*}

The sets $\Gamma_{E(3; 2; 1, 2)}$ and $\Gamma_{E(2; 2; 1, 1)}$  are referred to as $\mu_{1,3}-$\textit{quotient} and tetrablock, respectively \cite{Abouhajar, Bharali}. Young studied the symmetrized bidisc $\Gamma_{E(2;1;2)}$ and the tetrablock $\Gamma_{E(2; 2;1,1)}$, in collaboration with several co-authors \cite{Abouhajar,JAgler,young,ay,ay1,JAY, JANY}, from an operator theoretic point of view. Agler and Young established normal dilation for a pair of commuting operators with the symmetrized bidisc as a spectral set \cite{JAgler, young}. Various authors have also investigated symmetrized polydisc $\Gamma_n$ and studied the properties of $\Gamma_n$-isometries, $\Gamma_n$-unitaries, the Wold decomposition, and conditional dilation of $\Gamma_n$-contractions \cite{SS, A. Pal}. T. Bhattacharyya studied the tetrablock isometries, tetrablock unitaries, the Wold decomposition for tetrablock, and conditional dilation for tetrablock contarctions \cite{Bhattacharyya}. However, whether the tetrablock and $\Gamma_n, n>3,$ have the property $P$ remains unresolved.

Let  $$K=\{\textbf{x}=(x_1,\ldots,x_7)\in \Gamma_{E(3;3;1,1,1)} :x_1=\bar{x}_6x_7, x_3=\bar{x}_4x_7,x_5=\bar{x}_2x_7 ~{\rm{and}}~|x_7|=1\}$$ 
$$\rm{and}$$
\[ K_1 = \{x = (x_1, x_2, x_3, y_1, y_2) \in \Gamma_{E(3;2;1,2)} : x_1 = \overline{y}_2 x_3, x_2 = \overline{y}_1 x_3, |x_3| = 1 \}. \]

We begin with the following definitions that will be essential for our discussion.

\begin{defn}\label{def-1}
		\begin{enumerate}
			\item If $\Gamma_{E(3; 3; 1, 1, 1)}$ is a spectral set for $\textbf{T} = (T_1, \dots, T_7)$, then the $7$-tuple of commuting bounded operators $\textbf{T}$ defined on a  Hilbert space $\mathcal{H}$ is referred to as a \textit{$\Gamma_{E(3; 3; 1, 1, 1)}$-contraction}.
			
			\item Let $(S_1, S_2, S_3)$ and $(\tilde{S}_1, \tilde{S}_2)$ be tuples of commuting bounded operators defined on a Hilbert space $\mathcal{H}$ with $S_i\tilde{S}_j = \tilde{S}_jS_i$ for $1 \leqslant i \leqslant 3$ and $1 \leqslant j \leqslant 2$. We say that  $\textbf{S} = (S_1, S_2, S_3, \tilde{S}_1, \tilde{S}_2)$ is a $\Gamma_{E(3; 2; 1, 2)}$-contraction if $ \Gamma_{E(3; 2; 1, 2)}$ is a spectral set for $\textbf{S}$.
			
\item A commuting $7$-tuple of normal operators $\textbf{N} = (N_1, \dots, N_7)$ defined on a Hilbert space $\mathcal{H}$ is  a \textit{$\Gamma_{E(3; 3; 1, 1, 1)}$-unitary} if the Taylor joint spectrum $\sigma(\textbf{N})$ is contained in the set $K$.  		
			\item A commuting $5$-tuple of normal operators $\textbf{M} = (M_1, M_2, M_3, \tilde{M}_1, \tilde{M}_2)$ on a Hilbert space $\mathcal{H}$ is referred as a \textit{$\Gamma_{E(3; 2; 1, 2)}$-unitary} if the Taylor joint spectrum $\sigma(\textbf{M})$ is contained in $K_1.$ 
					
\item A  $\Gamma_{E(3; 3; 1, 1, 1)}$-isometry (respectively, $\Gamma_{E(3; 2; 1, 2)}$-isometry) is defined as the restriction of a $\Gamma_{E(3; 3; 1, 1, 1)}$-unitary (respectively, $\Gamma_{E(3; 2; 1, 2)}$-unitary)  to a joint invariant subspace. In other words, a $\Gamma_{E(3; 3; 1, 1, 1)}$-isometry ( respectively, $\Gamma_{E(3; 2; 1, 2)}$-isometry) is a $7$-tuple (respectively, $5$-tuple) of commuting bounded operators that possesses simultaneous extension to a \textit{$\Gamma_{E(3; 3; 1, 1, 1)}$-unitary} (respectively, \textit{$\Gamma_{E(3; 2; 1, 2)}$-unitary}). It is important to observe that a $\Gamma_{E(3; 3; 1, 1, 1)}$-isometry (respectively, $\Gamma_{E(3; 2; 1, 2)}$-isometry ) $\textbf{V}=(V_1\dots,V_7)$ (respectively,  $\textbf{W}=(W_1,W_2,W_3,\tilde{W}_1,\tilde{W}_2)$) consists of commuting subnormal operators with $V_7$ (respectively, $W_3$)  is an isometry.

\item   We say that $\textbf{V}$ (respectively, $\textbf{W}$)   is a pure $\Gamma_{E(3; 3; 1, 1, 1)}$-isometry (respectively, pure $\Gamma_{E(3; 2; 1, 2)}$-isometry) if $V_7$ (respectively, $W_3$) is a  pure isometry,  that is, a shift of some multiplicity.		
\end{enumerate}
	\end{defn}

We denote the unit circle by $\mathbb T.$  Let $\mathcal E$  be a separable Hilbert space. Let  $\mathcal B(\mathcal E)$ denote the space of bounded linear operators on $\mathcal E$ equipped with the operator norm. Let $H^2(\mathcal E)$ denote  the  Hardy space of analytic $\mathcal E$-valued functions defined on the unit disk  $\mathbb D$. Let $ L^2(\mathcal E)$ represent the Hilbert space of square-integrable $\mathcal E$-valued functions on the unit circle $\mathbb T,$ equipped with the natural inner product. The space $H^{\infty}(\mathcal B(\mathcal E))$ consists of bounded analytic $\mathcal B(\mathcal E)$-valued functions defined on $\mathbb D$. Let $L^{\infty}(\mathcal B(\mathcal E))$ denote the space of bounded measurable $\mathcal B(\mathcal E)$-valued functions on $\mathbb T$.  For $\phi \in L^{\infty}(\mathcal B(\mathcal E)),$ the Toeplitz operator associated with the symbol  $\phi$ is denoted by $T_{\phi}$ and is defined as follows: 
$$T_{\phi}f=P_{+}(\phi f), f \in H^2(\mathcal E),$$ where $P_{+} : L^2(\mathcal E) \to H^2(\mathcal E)$ is the orthogonal projecton.  In particular, $T_z$ is the
unilateral shift operator $M_z$ on $H^2(\mathcal E)$  and $T_{\bar{z}}$ is the backward shift $M_z^*$ on $H^2(\mathcal E)$.

In section $2$, we discuss various properties of $\Gamma_{E(3; 3; 1, 1, 1)}$-contractions and $\Gamma_{E(3; 2; 1, 2)}$-contractions. We further establish the existence and uniqueness of the fundamental operators associated with $\Gamma_{E(3; 3; 1, 1, 1)}$-contractions and $\Gamma_{E(3; 2; 1, 2)}$-contractions. In section $3$, we obtain a Beurling-Lax-Halmos type representation for invariant subspaces corresponding to a pure $\Gamma_{E(3; 3; 1, 1, 1)}$-isometry and a pure $\Gamma_{E(3; 2; 1, 2)}$-isometry. We  also provide an alternative proof of a Beurling-Lax-Halmos-type theorem. In section $4$, we  construct a conditional dilation for a $\Gamma_{E(3; 3; 1, 1, 1)}$-contraction and  a $\Gamma_{E(3; 2; 1, 2)}$-contraction. In section $5$, we develop an explicit functional model for a certain subclass of $\Gamma_{E(3; 3; 1, 1, 1)}$-contractions and $\Gamma_{E(3; 2; 1, 2)}$-contractions. In section $6$, we demonstrate that every $\Gamma_{E(3; 3; 1, 1, 1)}$-contraction (respectively, $\Gamma_{E(3; 2; 1, 2)}$-contraction) admits a unique decomposition as a direct sum of a $\Gamma_{E(3; 3; 1, 1, 1)}$-unitary (respectively, $\Gamma_{E(3; 2; 1, 2)}$-unitary) and a completely non-unitary $\Gamma_{E(3; 3; 1, 1, 1)}$-contraction (respectively, completely non-unitary $\Gamma_{E(3; 2; 1, 2)}$-contraction).

\section{More on $\Gamma_{E(3; 3; 1, 1, 1)}$-Contractions and $\Gamma_{E(3; 2; 1, 2)}$-Contractions}
This section begins with a review of the fundamental concepts needed for subsequent discussions. We recall the definitions of spectrum, spectral radius, and numerical radius of an operator. Let $\sigma(T)$ represent the spectrum of $T$, defined as $$\sigma(T):=\{\lambda \in \mathbb{C} \mid T-\lambda I~{\rm{ is~ not ~invertible}}\}.$$ Furthermore, the numerical radius of a bounded operator \( T \) on a Hilbert space \( \mathcal{H} \) is defined by $$\omega(T):=\sup\{|\langle Tx,x\rangle|:\| x \|=1\}.$$  A straightforward calculation yields that $r(T)\leq \omega(T)\leq \|T\|$ for a bounded operator $T$, where the spectral radius is defined as $$r(T):= \sup_{ \lambda \in \sigma(T)}|\lambda|.$$ 
We will state a lemma due to Ando,  which we will refer to several times in this paper. 
\begin{lem}[Lemma $2.9,$ \cite{Roy}]\label{numer}
The numerical radius of an operator $X$ is not greater than one if and only if ${\rm{Re }}zX \leq I$ for all complex numbers $z \in \mathbb T.$ \end{lem}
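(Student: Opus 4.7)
The plan is to reduce the operator inequality $\operatorname{Re}(zX) \leq I$ to a scalar condition on the quadratic form and then invoke the elementary identity $\sup_{z \in \mathbb{T}} \operatorname{Re}(zw) = |w|$, valid for every $w \in \mathbb{C}$. Everything should follow by tracking the definitions; there is no deeper input needed.

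First I would observe that for every unit vector $h \in \mathcal{H}$ and every $z \in \mathbb{T}$, writing $w = \langle Xh, h\rangle$, the quadratic form identity
\[
\bigl\langle \operatorname{Re}(zX)\,h,\, h\bigr\rangle \;=\; \operatorname{Re}\bigl(z\langle Xh, h\rangle\bigr) \;=\; \operatorname{Re}(zw)
\]
holds. Since $\operatorname{Re}(zX)$ is self-adjoint, the condition $\operatorname{Re}(zX) \leq I$ is equivalent to $\langle \operatorname{Re}(zX)h, h\rangle \leq 1$ for every unit vector $h$, and hence to the scalar inequality $\operatorname{Re}(z\langle Xh, h\rangle) \leq 1$ for all such $h$ and all $z \in \mathbb{T}$.

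For the forward direction, assume $\omega(X) \leq 1$, so $|w| = |\langle Xh, h\rangle| \leq 1$ for every unit $h$. Then
\[
\operatorname{Re}(zw) \;\leq\; |zw| \;=\; |w| \;\leq\; 1
\]
for every $z \in \mathbb{T}$, which by the identification above gives $\operatorname{Re}(zX) \leq I$. For the converse, fix a unit vector $h$ and set $w = \langle Xh, h\rangle$. If $w = 0$ the estimate $|w| \leq 1$ is immediate; otherwise take $z_0 := \overline{w}/|w| \in \mathbb{T}$, so that $z_0 w = |w|$. Applying the hypothesis at $z_0$ yields
\[
|w| \;=\; \operatorname{Re}(z_0 w) \;=\; \bigl\langle \operatorname{Re}(z_0 X)\,h,\, h\bigr\rangle \;\leq\; 1.
\]
Taking the supremum over unit vectors $h$ gives $\omega(X) \leq 1$. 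There is no genuine obstacle here; the only point requiring a bit of care is the choice of the unimodular phase $z_0$ that aligns $zw$ with the positive real axis, which is precisely the mechanism converting the uniform inequality over $\mathbb{T}$ into a bound on the modulus $|\langle Xh, h\rangle|$.
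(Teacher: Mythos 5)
Your proof is correct. The paper does not prove this lemma at all --- it is quoted verbatim from [Lemma $2.9$, \cite{Roy}] and attributed to Ando --- so there is nothing to compare against; your argument (reducing $\operatorname{Re}(zX)\leq I$ to the scalar inequality $\operatorname{Re}\bigl(z\langle Xh,h\rangle\bigr)\leq 1$ via self-adjointness of $\operatorname{Re}(zX)$, then choosing the aligning phase $z_0=\overline{w}/|w|$ for the converse) is exactly the standard proof of this equivalence and is complete.
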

Let $T$ be a contraction on a Hilbert space $\mathcal H,$ and define the defect operator associated with $T$ as $D_{T}=(I-T^*T)^{\frac{1}{2}}.$ The closure of the range of $ D_{T}$ is represented by $\mathcal D_{T}$. The aim of this section is to study the fundamental equations for $\Gamma_{E(3; 3; 1, 1, 1)}$-contractions and $\Gamma_{E(3; 2; 1, 2)}$-contractions. The definitions of operator functions $\rho_{G_{E(2; 1; 2)}} $ and $\rho_{G_{E(2; 2; 1,1)}} $ for symmetrized bidisc and tetrablock are defined as follows:
$$\rho_{G_{E(2; 1; 2)}} (S,P)=2 (I-P^*P)-(S-S^*P)-(S^*-P^*S) $$ and 
$$\rho_{G_{E(2; 2; 1,1)}} (T_1,T_2,T_3)=(I-T_3^*T_3) -(T_2^*T_2-T_1^*T_1) -2\Re {T_2-T_1^*T_3}, $$ where $P,T_3$ are contractions and $S,P$ and $T_1,T_2,T_3$ are commuting bounded operators defined on Hilbert spaces $\mathcal H_1$ and $\mathcal H_2$, respectively. The aforementioned operator functions are crucial for characterizing $\Gamma_{E(2; 1; 2)}$-contraction and $\Gamma_{E(2; 2; 1,1)}$-contraction, respectively. The function $\rho_{G_{E(2; 2; 1,1)}}$ also plays an important role in study of $\Gamma_{E(3; 3; 1, 1, 1)}$-contractions.

\begin{defn}\label{fundamental}
		Let $(T_1, \dots, T_7)$ be a $7$-tuple of commuting contractions on a Hilbert space $\mathcal{H}. $ The equations 

		\begin{equation}\label{Fundamental 1}
\begin{aligned}
&T_i - T^*_{7-i} T_7 = D_{T_7}F_iD_{T_7}, \;\;\; 1\leq i\leq 6, 
\end{aligned}
\end{equation}
where $F_i\in \mathcal{B}(\mathcal{D}_{T_7}),$ are referred to as the  fundamental equations for $(T_1, \dots, T_7)$.
			\end{defn}
We recall the definition of tetrablock contraction from \cite{Bhattacharyya}.

\begin{defn}

Let $(A,B,P)$ be a commuting triple of bounded operators on a Hilbert space $\mathcal H$. The triple $(A,B,P)$ is called a tetrablock contraction if $\Gamma_{E(2;2;1,1)}$ is a spectral set for $(A,B,P)$. 

\end{defn}
For any $z\in \mathbb C$, we introduce the operators $S^{(i)}_z = T_i + zT_{7-i}$ for $1\leq i \leq 6$ and $P_z = zT_7$.
\begin{thm}\label{fundam}
		Let $\textbf{T} = (T_1, \dots, T_7)$ be a commuting $7$-tuple of bounded operators acting on a Hilbert space $\mathcal{H}$. Then  the following holds $(1) \Rightarrow (2) \Rightarrow (3) \Rightarrow (4) \Rightarrow (5):$
		\begin{enumerate}
			\item $\mathbf{T} = (T_1, \dots, T_7)$ be a $\Gamma_{E(3; 3; 1, 1, 1)}$-contraction.
			
			\item $(T_i, T_{7-i}, T_7)$ is a $\Gamma_{E(2; 2; 1, 1)}$-contraction for $1 \leq i \leq 6$.
			
			\item For $1\leq i \leq  6$ and $z \in \mathbb{T}$,
			\begin{equation*}
				\begin{aligned}
					&\rho_{G_{E(2; 2; 1, 1)}}(T_i, zT_{7-i}, zT_7) \geqslant 0, ~\text{and}~ \rho_{G_{E(2; 2; 1, 1)}}(T_{7-i}, zT_i, zT_7) \geqslant 0
				\end{aligned}
			\end{equation*}
			and the spectral radius of $S^{(i)}_z$ is not bigger than $2$ for $1 \leq i \leq 6.$
			
			\item The pair $(S^{(i)}_z, P_z),1\leq i \leq 6,$ is a $\Gamma_{E(2; 1; 2)}$-contraction for every $z \in \mathbb{T}$.
			
			\item The fundamental equations in \eqref{Fundamental 1} have unique solutions $F_i$ and $F_{7-i}$ in $\mathcal{B}(\mathcal{D}_{T_7})$ for $1\leq i \leq 6.$  Moreover, the operator $F_i + zF_{7-i}, 1\leq i \leq 6,$ has numerical radius not bigger than $1$ for every $z \in \mathbb{T}$.
		\end{enumerate}
	\end{thm}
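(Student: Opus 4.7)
The overall strategy is a cascade: reduce each implication to the corresponding characterizations for the tetrablock $\Gamma_{E(2;2;1,1)}$ and the symmetrized bidisc $\Gamma_{E(2;1;2)}$ already available in the literature. The coordinates of $\Gamma_{E(3;3;1,1,1)}$ are arranged so that the polynomial projections $\pi_i(x_1,\ldots,x_7)=(x_i,x_{7-i},x_7)$ land in the tetrablock, while the affine combinations $(x_i+zx_{7-i},zx_7)$ land in the symmetrized bidisc. Since $|x_7|\leq 1$ on $\Gamma_{E(3;3;1,1,1)}$, $T_7$ is automatically a contraction, so $D_{T_7}$ and $\mathcal D_{T_7}$ are available throughout.

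For (1)$\Rightarrow$(2) I would verify the set-theoretic containment $\pi_i(\Gamma_{E(3;3;1,1,1)})\subseteq \Gamma_{E(2;2;1,1)}$ at the matrix level: the triple $(x_i,x_{7-i},x_7)$ records a diagonal entry of $A$, the complementary $2\times 2$ cofactor, and $\det A$, and a minor/Schur-complement computation shows that $\mu_{E(3;3;1,1,1)}(A)\leq 1$ forces $\mu_{E(2;2;1,1)}$ of the associated tetrablock datum to be at most one. The spectral-set property is then transferred by the composition $p\mapsto p\circ\pi_i$. For (2)$\Rightarrow$(3) I would invoke the known operator-inequality characterization of tetrablock contractions: if $(A,B,P)$ is a tetrablock contraction then $\rho_{G_{E(2;2;1,1)}}(A,zB,zP)\geq 0$ and $\rho_{G_{E(2;2;1,1)}}(B,zA,zP)\geq 0$ for all $z\in\mathbb T$. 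The spectral-radius bound $r(S_z^{(i)})\leq 2$ follows from Taylor's spectral mapping theorem combined with the elementary trace estimate $|x_1+zx_2|\leq 2$ on $\Gamma_{E(2;2;1,1)}$.

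For (3)$\Rightarrow$(4) I would exploit a direct algebraic identity. Expanding and using $|z|=|\alpha|=1$ one finds
\begin{equation*}
\rho_{G_{E(2;1;2)}}(\alpha S_z^{(i)},\alpha^2 P_z)=\rho_{G_{E(2;2;1,1)}}(T_{7-i},\alpha T_i,\alpha T_7)+\rho_{G_{E(2;2;1,1)}}(T_i,\alpha z T_{7-i},\alpha z T_7).
\end{equation*}
By hypothesis (3), applied at the parameters $\alpha$ and $\alpha z$, both summands on the right are nonnegative, so the left side is nonnegative for every $\alpha\in\mathbb T$. Combined with $\|P_z\|\leq 1$ and $r(S_z^{(i)})\leq 2$, the known characterization of symmetrized bidisc contractions via positivity of $\rho_{G_{E(2;1;2)}}(\alpha S,\alpha^2 P)$ on the unit circle identifies $(S_z^{(i)},P_z)$ as a $\Gamma_{E(2;1;2)}$-contraction.

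For (4)$\Rightarrow$(5) I would apply the existence-and-uniqueness theorem for fundamental operators of symmetrized bidisc contractions. For every $z\in\mathbb T$ there is a unique $G_z\in\mathcal B(\mathcal D_{T_7})$ with $\omega(G_z)\leq 1$ and
\begin{equation*}
D_{T_7}G_z D_{T_7}=S_z^{(i)}-(S_z^{(i)})^*P_z=(T_i-T_{7-i}^*T_7)+z(T_{7-i}-T_i^*T_7).
\end{equation*}
Setting $z=\pm 1$ and averaging produces $F_i:=(G_1+G_{-1})/2$ and $F_{7-i}:=(G_1-G_{-1})/2$ in $\mathcal B(\mathcal D_{T_7})$ solving the fundamental equations \eqref{Fundamental 1}; uniqueness is a density argument using that $D_{T_7}$ has dense range in $\mathcal D_{T_7}$. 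Substituting back, $D_{T_7}(F_i+zF_{7-i})D_{T_7}=D_{T_7}G_z D_{T_7}$ for every $z\in\mathbb T$, and uniqueness of $G_z$ then forces $F_i+zF_{7-i}=G_z$, whence $\omega(F_i+zF_{7-i})\leq 1$. The most delicate step is (1)$\Rightarrow$(2), where the containment $\pi_i(\Gamma_{E(3;3;1,1,1)})\subseteq\Gamma_{E(2;2;1,1)}$ rests on a careful matrix-theoretic analysis of $\mu_{E(3;3;1,1,1)}$; the remaining implications reduce to the algebraic identity above together with the established characterizations of tetrablock and symmetrized bidisc contractions.
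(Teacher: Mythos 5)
Your proposal is correct and follows essentially the same route as the paper: the paper disposes of $(1)\Rightarrow(2)$ by citing [Proposition 2.11, \cite{apal2}] (the projection $(x_i,x_{7-i},x_7)$ onto the tetrablock) and of $(2)\Rightarrow(3)\Rightarrow(4)\Rightarrow(5)$ by citing [Theorem 3.4, \cite{Bhattacharyya}], and your argument is precisely a reconstruction of the content of those two citations, including the key identity expressing $\rho_{G_{E(2;1;2)}}(\alpha S_z^{(i)},\alpha^2 P_z)$ as a sum of two tetrablock operator pencils and the averaging of the symmetrized-bidisc fundamental operators $G_{\pm 1}$ to produce $F_i$ and $F_{7-i}$. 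The only difference is one of presentation: you supply the details that the paper outsources to the references.
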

	
	\begin{proof}
The implication $(1) \Rightarrow (2)$ follows from [Proposition $2.11$, \cite{apal2}]. We derive the implication $(2) \Rightarrow (3) \Rightarrow (4) \Rightarrow (5)$ from [\cite{Bhattacharyya}, Theorem $3.4$]. This completes the proof.
	\end{proof}
	
Let $(S_1, S_2, S_3, \tilde{S}_1, \tilde{S}_2)$ be a $5$-tuple of commuting bounded operators defined on some Hilbert space $\mathcal H$. We need two operator functions for the commuting tuple $(S_1, S_2, S_3, \tilde{S}_1, \tilde{S}_2)$ of bounded operators with $\|S_3\|\leq 1$, which is essential for characterising the $\Gamma_{E(3; 2; 1, 2)}$-contractions, namely, 
\small{\begin{equation}\label{fund1}
				\begin{aligned}
					&\rho_{G_{E(2; 2; 1,1)}} (S_1,\tilde{S}_2,S_3)=(I-S_3^*S_3) -(\tilde{S}_2^*\tilde{S}_2-S_1^*S_1) -2\Re {\tilde{S}_2-S_1^*S_3}
				\end{aligned}
			\end{equation}}
$${\rm{and}}$$	
\small{\begin{equation}\label{fund11}
				\begin{aligned}
					&\rho_{G_{E(2; 2; 1,1)}} \left(\frac{S_2}{2},\frac{\tilde{S}_1}{2},S_3\right)=(I-S_3^*S_3) -\frac{(\tilde{S}_1^*\tilde{S}_1-S_2^*S_2)}{4} -\frac{\Re {\tilde{S}_1-S_2^*S_3}}{2}
				\end{aligned}
			\end{equation}}
\begin{defn}\label{fundamental}
Let $(S_1, S_2, S_3, \tilde{S}_1, \tilde{S}_2)$ be a $5$-tuple of commuting bounded operators defined on a Hilbert space $\mathcal H$. The equations
\begin{equation}
			\begin{aligned}\label{funda1}
					&S_1 - \tilde{S}^*_2S_3 = D_{S_3}G_1D_{S_3},\,\, \tilde{S}_2 - S^*_1S_3 = D_{S_3}\tilde{G}_2D_{S_3},
				\end{aligned}
			\end{equation}
			$${\rm{and}}$$
		\begin{equation}
				\begin{aligned}\label{funda11}
				&\frac{S_2}{2} - \frac{\tilde{S}^*_1}{2}S_3 = D_{S_3}G_2D_{S_3}, \,\, \frac{\tilde{S}_1}{2} - \frac{S^*_2}{2}S_3 = D_{S_3}\tilde{G}_1D_{S_3},				\end{aligned}
			\end{equation}
where $G_1,2G_2,2\tilde{G}_1$ and $\tilde{G}_2$ in $\mathcal{B}(\mathcal{D}_{S_3}),$ are referred to as the  fundamental equations for $(S_1, S_2, S_3, \tilde{S}_1, \tilde{S}_2)$.		\end{defn}
For any $z\in \mathbb C,$ we define the operators $\tilde{S}_z = S_1 + z\tilde{S}_2, \tilde{P}_z = zS_3$ and $\hat{S}_z = \frac{S_2}{2} + z\frac{\tilde{S}_1}{2}, \hat{P}_z = zS_3.$ 
	
	\begin{thm}\label{s1s2}
		Let $(S_1, S_2, S_3, \tilde{S}_1, \tilde{S}_2)$ be a $5$-tuple of commuting bounded operators defined on some Hilbert space $\mathcal H$. Then the following holds $(1) \Rightarrow (2) \Rightarrow (3) \Rightarrow (4) \Rightarrow (5):$
		\begin{enumerate}
			\item $\mathbf{S} = (S_1, S_2, S_3, \tilde{S}_1, \tilde{S}_2)$ is a $\Gamma_{E(3; 2; 1, 2)}$-contraction.
			
			\item $(S_1, \tilde{S}_2, S_3)$ and $(\frac{S_2}{2}, \frac{\tilde{S}_1}{2}, S_3)$ are $\Gamma_{E(2; 2; 1, 1)}$-contractions.
	\item For every $z \in \mathbb{T}$, we have
			\begin{equation}
				\begin{aligned}
					&\rho_{G_{E(2; 2; 1, 1)}}(S_1, z\tilde{S}_2, zS_3) \geqslant 0 ~\text{and}~
					\rho_{G_{E(2; 2; 1, 1)}}(\tilde{S}_2, zS_1, zS_3) \geqslant 0,
				\end{aligned} \end{equation}
				\begin{equation}
				\begin{aligned}
					&\rho_{G_{E(2; 2; 1, 1)}}\left(\frac{S_2}{2}, z\frac{\tilde{S}_1}{2}, zS_3\right) \geqslant 0 ~\text{and}~
					\rho_{G_{E(2; 2; 1, 1)}}\left(\frac{\tilde{S}_1}{2}, z\frac{S_2}{2}, zS_3\right) \geqslant 0
				\end{aligned}
			\end{equation}
			and the spectral radius of $\tilde{S}_z$  and $\hat{S}_z$ are not bigger than $2$.
			
			\item The pair of operators $(\tilde{S}_z, \tilde{P}_z)$ and  $(\hat{S}_z, \hat{P}_z)$ are  $\Gamma_{E(2; 1; 2)}$-contractions for every $z \in \mathbb{T}$.
			
			\item The fundamental equations in \eqref{funda1} and \eqref{funda11} have unique solutions $G_1, \tilde{G}_2$ and $G_2,\tilde{G}_1$ in $\mathcal{B}(\mathcal{D}_{S_3})$, respectively. Moreover, the operators $G_1 + z\tilde{G}_2$ and $G_2 + z\tilde{G}_1$ have numerical radius not bigger than $1$ for every $z \in \mathbb{T}$.
		\end{enumerate}
	\end{thm}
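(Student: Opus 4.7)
The plan is to reduce Theorem \ref{s1s2} to two parallel applications of the tetrablock theory, mirroring the strategy used for Theorem \ref{fundam}. The five-tuple $\mathbf{S}=(S_1,S_2,S_3,\tilde{S}_1,\tilde{S}_2)$ naturally splits into the two commuting triples $(S_1,\tilde{S}_2,S_3)$ and $(\frac{S_2}{2},\frac{\tilde{S}_1}{2},S_3)$, where pairwise commutativity follows from the hypothesis $S_i\tilde{S}_j=\tilde{S}_jS_i$. Each of the conditions $(2)$--$(5)$ is the conjunction of the corresponding tetrablock-contraction statement for these two triples. Once $(1)\Rightarrow(2)$ is in hand, the chain $(2)\Rightarrow(3)\Rightarrow(4)\Rightarrow(5)$ follows by invoking [Theorem $3.4$, \cite{Bhattacharyya}] separately for each triple.

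For $(1)\Rightarrow(2)$, I would verify at the matrix level that the two linear (hence polynomial) coordinate maps $(x_1,x_2,x_3,y_1,y_2)\mapsto(x_1,y_2,x_3)$ and $(x_1,x_2,x_3,y_1,y_2)\mapsto(\frac{x_2}{2},\frac{y_1}{2},x_3)$ send $\Gamma_{E(3;2;1,2)}$ into the tetrablock $\Gamma_{E(2;2;1,1)}$. Concretely, given $A\in\mathcal{M}_{3\times 3}(\mathbb{C})$ with $\mu_{E(3;2;1,2)}(A)\leq 1$, one produces $2\times 2$ matrices $B,C$ with $\mu_{E(2;2;1,1)}$-norm at most one whose tetrablock coordinates coincide with the two projected images of the $\Gamma_{E(3;2;1,2)}$-coordinates of $A$. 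This block-decomposition step is the direct analogue of the quotient identity [Proposition $2.11$, \cite{apal2}] that underlies $(1)\Rightarrow(2)$ in the proof of Theorem \ref{fundam}. Contractivity of the functional calculus transfers along a polynomial map between spectral sets (since $\|q\circ\pi\|_{\infty,\Omega_1}\leq\|q\|_{\infty,\Omega_2}$ whenever $\pi(\Omega_1)\subseteq\Omega_2$), giving the two tetrablock-contraction assertions in $(2)$.

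The remaining implications are bookkeeping. Applying [Theorem $3.4$, \cite{Bhattacharyya}] to the tetrablock contraction $(S_1,\tilde{S}_2,S_3)$ simultaneously produces the first $\rho_{G_{E(2;2;1,1)}}$-positivity in $(3)$, the spectral-radius bound $r(\tilde{S}_z)\leq 2$, the $\Gamma_{E(2;1;2)}$-contractivity of $(\tilde{S}_z,\tilde{P}_z)$ in $(4)$, and the unique solvability of \eqref{funda1} together with $\omega(G_1+z\tilde{G}_2)\leq 1$ in $(5)$. An identical invocation for $(\frac{S_2}{2},\frac{\tilde{S}_1}{2},S_3)$ yields the companion statements for $\hat{S}_z$, $\hat{P}_z$, and the pair $(G_2,\tilde{G}_1)$ solving \eqref{funda11}. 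The sole nontrivial ingredient is the quotient identity behind $(1)\Rightarrow(2)$; everything else reduces to two parallel appeals to the tetrablock theorem, so I expect that step to be the main obstacle while the rest of the argument is essentially formal.
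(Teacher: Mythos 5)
Your proposal is correct and follows essentially the same route as the paper: the paper obtains $(1)\Rightarrow(2)$ by citing [Proposition $2.13$, \cite{apal2}] (the quotient identity you sketch at the matrix level) and then derives $(2)\Rightarrow(3)\Rightarrow(4)\Rightarrow(5)$ by two parallel applications of [Theorem $3.4$, \cite{Bhattacharyya}] to the triples $(S_1,\tilde{S}_2,S_3)$ and $(\tfrac{S_2}{2},\tfrac{\tilde{S}_1}{2},S_3)$, exactly as you describe. The only difference is that you outline a direct proof of the coordinate-projection step rather than citing it, which is consistent with the paper's argument.
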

	
	\begin{proof}
The implication $(1) \Rightarrow (2)$ is obtained from [Proposition $2.13$, \cite{apal2}]. We deduce the implication $(2) \Rightarrow (3) \Rightarrow (4) \Rightarrow (5)$ by using [\cite{Bhattacharyya}, Theorem $3.4$]. This completes the proof.	
	
	\end{proof}

The following lemmas provide the existence of unique fundamental operators for a $\Gamma_{E(3; 3; 1, 1, 1)}$-contraction and a $\Gamma_{E(3; 2; 1, 2)}$-contraction. These results play a crucial role in constructing conditional dilation for a $\Gamma_{E(3; 3; 1, 1, 1)}$-contraction and a $\Gamma_{E(3; 2; 1, 2)}$-contraction.
\begin{lem}\label{FiFj}
	The fundamental operators  of a $\Gamma_{E(3; 3; 1, 1, 1)}$-contraction $\textbf{T} = (T_1, \dots, T_7)$ are the unique bounded linear operators $X_i$ and $X_{7-i}$, $1\leq i \leq 6$, defined on $\mathcal D_{T_7}$ satisfying the operator equations
\begin{equation}
			\begin{aligned}
				&D_{T_7}T_i = X_iD_{T_7} + X^*_{7-i}D_{T_7}T_7 ~\text{and}~ D_{T_7}T_{7-i} = X_{7-i}D_{T_7} + X^*_iD_{T_7}T_7~{\rm{for}}~1\leq i \leq 6.
			\end{aligned}
		\end{equation}
	\end{lem}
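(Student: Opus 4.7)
The plan is to prove the lemma in two stages: existence (verifying that the fundamental operators $F_i$ and $F_{7-i}$ from Theorem~\ref{fundam}(5) satisfy the displayed equations with $X_i=F_i$ and $X_{7-i}=F_{7-i}$) and uniqueness (showing that any bounded pair on $\mathcal D_{T_7}$ satisfying the equations must coincide with this pair). Both stages rest on the identity $D_{T_7}^2 = I - T_7^*T_7$, the commutativity relations $T_iT_7=T_7T_i$, and the fact that $D_{T_7}$ is injective on $\mathcal D_{T_7}$ (because its kernel equals $\mathcal D_{T_7}^\perp$).

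For existence, I would begin with the fundamental equations $T_i - T_{7-i}^* T_7 = D_{T_7} F_i D_{T_7}$ and $T_{7-i} - T_i^* T_7 = D_{T_7} F_{7-i} D_{T_7}$. Taking the adjoint of the second gives $T_{7-i}^* = T_7^* T_i + D_{T_7} F_{7-i}^* D_{T_7}$; substituting this into the first and invoking commutativity produces
\[
D_{T_7}^2 T_i \;=\; D_{T_7}\bigl(F_i D_{T_7} + F_{7-i}^* D_{T_7} T_7\bigr).
\]
Both $D_{T_7} T_i h$ and $(F_i D_{T_7} + F_{7-i}^* D_{T_7} T_7)h$ lie in $\mathcal D_{T_7}$ for every $h\in\mathcal H$, so injectivity of $D_{T_7}$ on $\mathcal D_{T_7}$ lets me cancel the outer $D_{T_7}$, yielding the first displayed equation. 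The second follows by the analogous computation with the roles of $i$ and $7-i$ interchanged.

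For uniqueness, given a second pair $(X_i, X_{7-i})$ on $\mathcal D_{T_7}$ solving the same equations, I would set $U = X_i - F_i$ and $V = X_{7-i} - F_{7-i}$, obtaining the homogeneous system
\[
UD_{T_7} + V^* D_{T_7} T_7 = 0, \qquad VD_{T_7} + U^* D_{T_7} T_7 = 0.
\]
Taking the adjoint of the second gives $D_{T_7}V^* = -T_7^* D_{T_7} U$; multiplying the first on the left by $D_{T_7}$ and substituting yields the fixed-point identity
\[
D_{T_7} U D_{T_7} \;=\; T_7^* D_{T_7} U D_{T_7} T_7,
\]
which iterates to $D_{T_7} U D_{T_7} = T_7^{*n} D_{T_7} U D_{T_7} T_7^n$ for every $n\ge 0$. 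The main obstacle, and the only nontrivial step, is to extract $U=0$ from this. I would combine the elementary bound $\|D_{T_7} U D_{T_7} h\| \le \|U\|\,\|D_{T_7} T_7^n h\|$ with the telescoping identity $\|D_{T_7} T_7^n h\|^2 = \|T_7^n h\|^2 - \|T_7^{n+1} h\|^2$; since $\{\|T_7^n h\|\}$ is non-increasing and bounded below, it converges, forcing $\|D_{T_7} T_7^n h\|\to 0$. Hence $D_{T_7} U D_{T_7} = 0$, and two further applications of injectivity of $D_{T_7}$ on $\mathcal D_{T_7}$ (first giving $U D_{T_7} = 0$, then $U=0$ by density of $\Ran(D_{T_7})$ in $\mathcal D_{T_7}$) complete the argument. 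The same reasoning applied to $V$ gives $V=0$, so $X_i = F_i$ and $X_{7-i} = F_{7-i}$.
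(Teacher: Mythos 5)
Your proof is correct. Note, though, that the paper's own proof is a two-line reduction: it observes (via Theorem \ref{fundam}) that each triple $(T_i,T_{7-i},T_7)$ is a $\Gamma_{E(2;2;1,1)}$-contraction and then cites Corollary $4.2$ of Bhattacharyya's tetrablock paper, which asserts exactly the existence and uniqueness of operators satisfying these equations. What you have done is reconstruct, from scratch, the argument that sits behind that cited corollary: existence by differencing the two fundamental equations $T_i-T_{7-i}^*T_7=D_{T_7}F_iD_{T_7}$ and $T_{7-i}-T_i^*T_7=D_{T_7}F_{7-i}D_{T_7}$ and cancelling an outer $D_{T_7}$ using injectivity on $\mathcal D_{T_7}$, and uniqueness via the fixed-point identity $D_{T_7}UD_{T_7}=T_7^{*n}D_{T_7}UD_{T_7}T_7^n$ together with $\|D_{T_7}T_7^nh\|^2=\|T_7^nh\|^2-\|T_7^{n+1}h\|^2\to 0$. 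All the steps check out: the ranges of $UD_{T_7}$ and of $F_iD_{T_7}+F_{7-i}^*D_{T_7}T_7$ do lie in $\mathcal D_{T_7}$, where $D_{T_7}$ is injective, and density of $\Ran(D_{T_7})$ in $\mathcal D_{T_7}$ finishes the cancellation. The trade-off is transparency versus economy: your version makes the lemma self-contained and exposes exactly which hypotheses are used (only the solvability of the fundamental equations from Theorem \ref{fundam}(5), commutativity $T_iT_7=T_7T_i$, and contractivity of $T_7$), whereas the paper's version leans on the already-established tetrablock theory and avoids repeating a known computation.
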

	
	\begin{proof}
By Theorem \ref{fundam}, it follows that $(T_i, T_{7-i}, T_7)$ is a $\Gamma_{E(2; 2; 1, 1)}$-contraction for $1\leq i \leq 6$. As $(T_i, T_{7-i}, T_7)$ is a $\Gamma_{E(2; 2; 1, 1)}$-contraction, $1\leq i \leq 6$, it implies from [Corollary $4.2$, \cite{Bhattacharyya}] that there exist unique bounded linear operators $X_i$ and $X_{7-i}$, $1\leq i \leq 6$, on $\mathcal D_{T_7}$ that satisfy the following operator equations
\begin{equation}\label{fun}
\begin{aligned}
				&D_{T_7}T_i = X_iD_{T_7} + X^*_{7-i} D_{T_7}T_7 ~\text{and}~ D_{T_7}T_{7-i} = X_{7-i}D_{T_7} + X^*_iD_{T_7}T_7~{\rm{for}}~1\leq i \leq 6.
\end{aligned}
\end{equation}		This completes the proof.
	\end{proof}
	
	\begin{lem}\label{F12}
		Let $\textbf{T} = (T_1, \dots, T_7) $ be a $\Gamma_{E(3; 3; 1, 1, 1)} $-contraction on the Hilbert space $\mathcal{H}$ with commuting fundamental operators $F_i, 1\leq i \leq 6,$ defined on $\mathcal{D}_{T_7}. $ Then
		\begin{equation}\label{F_i}
			\begin{aligned}
				T_i^*T_i - T_{7-i}^*T_{7-i} = D_{T_7}(F^*_iF_i - F^*_{7-i}F_{7-i})D_{T_7},1\leq i \leq 6.
			\end{aligned}
		\end{equation}
	\end{lem}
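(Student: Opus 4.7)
The plan is to view the identity as the $\Gamma_{E(3;3;1,1,1)}$ analogue of the tetrablock identity $A^*A - B^*B = D_P(F_1^*F_1 - F_2^*F_2)D_P$ that holds for tetrablock contractions $(A,B,P)$ with commuting fundamental operators $F_1,F_2$. By Theorem \ref{fundam}, for each $i\in\{1,\dots,6\}$ the triple $(T_i,T_{7-i},T_7)$ is a $\Gamma_{E(2;2;1,1)}$-contraction, and the pair $(F_i,F_{7-i})$ is precisely its pair of fundamental operators. Thus the proof reduces to a direct algebraic calculation combining the fundamental equations \eqref{Fundamental 1}, the intertwining relations of Lemma \ref{FiFj}, the commutativity of the tuple $\mathbf{T}$, and the assumed commutativity of the $F_j$'s.

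Fix $i$ with $1\le i\le 6$. First, I would take adjoints of the fundamental equations to obtain
\[
T_i^* - T_7^*T_{7-i} = D_{T_7}F_i^* D_{T_7}, \qquad T_{7-i}^* - T_7^*T_i = D_{T_7}F_{7-i}^* D_{T_7}.
\]
Multiplying these on the right by $T_i$ and $T_{7-i}$ respectively and subtracting, the terms $T_7^*T_{7-i}T_i$ and $T_7^*T_iT_{7-i}$ cancel by commutativity of the $T_j$'s, leaving
\[
T_i^*T_i - T_{7-i}^*T_{7-i} = D_{T_7}F_i^* D_{T_7}T_i \;-\; D_{T_7}F_{7-i}^* D_{T_7}T_{7-i}.
\]
Next I would substitute the intertwining relations from Lemma \ref{FiFj}, namely $D_{T_7}T_i = F_iD_{T_7} + F_{7-i}^*D_{T_7}T_7$ and $D_{T_7}T_{7-i} = F_{7-i}D_{T_7} + F_i^*D_{T_7}T_7$. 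Collecting terms yields
\[
D_{T_7}(F_i^*F_i - F_{7-i}^*F_{7-i})D_{T_7} \;+\; D_{T_7}\bigl(F_i^*F_{7-i}^* - F_{7-i}^*F_i^*\bigr)D_{T_7}T_7.
\]

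The decisive step, and the only real subtlety, is the vanishing of the cross term: since $F_i$ and $F_{7-i}$ commute by hypothesis, $F_i^*F_{7-i}^* = (F_{7-i}F_i)^* = (F_iF_{7-i})^* = F_{7-i}^*F_i^*$, so the bracketed commutator is zero. This leaves precisely the claimed identity \eqref{F_i}. No analytic or spectral-theoretic obstacle is expected; the argument is purely algebraic, and the substance lies in identifying that both levels of commutativity, for $\{T_j\}$ and for $\{F_j\}$, are needed to eliminate the cross terms.
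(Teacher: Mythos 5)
Your proposal is correct and follows the same route as the paper: both reduce the statement, via Theorem \ref{fundam}, to the corresponding identity for the tetrablock contractions $(T_i,T_{7-i},T_7)$ with fundamental operators $(F_i,F_{7-i})$. The only difference is that the paper then simply cites Corollary $4.4$ of \cite{Bhattacharyya}, whereas you write out the underlying algebraic computation (adjoint the fundamental equations, multiply by $T_i$ and $T_{7-i}$, cancel via commutativity of the $T_j$'s, substitute the intertwining relations of Lemma \ref{FiFj}, and kill the cross term using $[F_i,F_{7-i}]=0$), which is exactly the content of that cited corollary and is carried out correctly.
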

	
	\begin{proof}
		Since $\textbf{T}$ is a $\Gamma_{E(3; 3; 1, 1, 1)}$-contraction, it yields from Theorem \ref{fundam} that the tuple $(T_i, T_{7-i}, T_7)$ is a $\Gamma_{E(2; 2; 1, 1)}$-contraction for $1\leq i\leq 6$. Hence, by [Corollary $4.4$, \cite{Bhattacharyya}], we deduce that
\begin{equation*}
\begin{aligned}
				T_i^*T_i - T_{7-i}^*T_{7-i} = D_{T_7}(F^*_iF_i - F ^*_{7-i}F_{7-i})D_{T_7}, 1\leq i \leq 6.
\end{aligned}
\end{equation*}
		This completes the proof.
	\end{proof}
	
The proof of following lemmas is identical to that of Lemma \ref{FiFj} and Lemma \ref{F12}. We therefore omit the proofs.	
	\begin{lem}\label{s1s3}
The fundamental operators  of a $\Gamma_{E(3; 2; 1, 2)}$-contraction $\textbf{S} = (S_1, S_2, S_3, \tilde{S}_1, \tilde{S}_2)$ are the unique operators $G_1,\tilde{G}_2,G_2$ and $\tilde{G}_1$  defined on $\mathcal{D}_{S_3}$ which satisfy the following operator equations
		\begin{equation}\label{s3}
			\begin{aligned}
				&D_{S_3}S_1 = G_1D_{S_3} + \tilde{G}_2^*D_{S_3}S_3, \,\, D_{S_3}\tilde{S}_2 = \tilde{G}_2D_{S_3} + G_1^*D_{S_3}S_3, \\&\,\, ~~~~~~~~~~~~~~~~~~~~~~~~~~~~~~~~~~~~~~~~~~~~\,\,\,\,\,\,\,\,\,\,\,\,\,\,\,\,\,\,\,\,\,\,\,\,\,\,\,\,\,\,\,\,\,\,\,\,\,\,\,\,\,\,\,\,\,\,\,\,\,\,\,\,\ \text{and}\\
				&D_{S_3}\frac{S_2}{2} = G_2D_{S_3} + \tilde{G}^*_1D_{S_3}S_3, \,\, D_{S_3}\frac{\tilde{S}_1}{2} = \tilde{G}_1D_{S_3} + G^*_2D_{S_3}S_3.
			\end{aligned}
		\end{equation}
		
	\end{lem}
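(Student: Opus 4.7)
The plan is to reduce the claim to two separate applications of the corresponding tetrablock result (Corollary~4.2 of \cite{Bhattacharyya}), in exact analogy with the proof of Lemma~\ref{FiFj}. Theorem~\ref{s1s2} tells us that whenever $\mathbf{S}=(S_1,S_2,S_3,\tilde S_1,\tilde S_2)$ is a $\Gamma_{E(3;2;1,2)}$-contraction, the two commuting triples $(S_1,\tilde S_2,S_3)$ and $(\tfrac{S_2}{2},\tfrac{\tilde S_1}{2},S_3)$ are each $\Gamma_{E(2;2;1,1)}$-contractions, i.e., tetrablock contractions. This is the only nontrivial input; once it is in place the lemma follows by invoking the existence--uniqueness statement for the tetrablock fundamental operators on each triple independently.

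First, I would apply Corollary~4.2 of \cite{Bhattacharyya} to the tetrablock contraction $(S_1,\tilde S_2,S_3)$ to obtain unique bounded operators $G_1,\tilde G_2$ on $\mathcal D_{S_3}$ satisfying
\begin{equation*}
D_{S_3}S_1 = G_1 D_{S_3} + \tilde G_2^{\,*} D_{S_3} S_3,\qquad D_{S_3}\tilde S_2 = \tilde G_2 D_{S_3} + G_1^{*} D_{S_3} S_3.
\end{equation*}
Next, the same corollary applied to the tetrablock contraction $(\tfrac{S_2}{2},\tfrac{\tilde S_1}{2},S_3)$ produces unique $G_2,\tilde G_1$ on $\mathcal D_{S_3}$ with
\begin{equation*}
D_{S_3}\tfrac{S_2}{2} = G_2 D_{S_3} + \tilde G_1^{\,*} D_{S_3} S_3,\qquad D_{S_3}\tfrac{\tilde S_1}{2} = \tilde G_1 D_{S_3} + G_2^{*} D_{S_3} S_3.
\end{equation*}
Concatenating the two quadruples $(G_1,\tilde G_2)$ and $(G_2,\tilde G_1)$ yields the four operators required by the statement, and uniqueness is inherited coordinate-wise from the tetrablock case.

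The only conceptual step is the reduction in Theorem~\ref{s1s2}, which was already proved; the remainder is bookkeeping. There is no real obstacle beyond the care needed to keep the indices straight between the two tetrablock triples (in particular, to respect the factor of $\tfrac12$ when identifying the fundamental operators on the $(\tfrac{S_2}{2},\tfrac{\tilde S_1}{2},S_3)$ side) and to recognize that the uniqueness of each pair is unaffected by the presence of the other. Accordingly, as the authors remark, the proof is a verbatim transcription of the argument used for Lemma~\ref{FiFj} and may be omitted.
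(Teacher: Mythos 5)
Your proposal is correct and follows exactly the route the paper intends: the authors omit the proof precisely because it is the argument of Lemma~\ref{FiFj} transcribed to the two tetrablock triples $(S_1,\tilde S_2,S_3)$ and $(\tfrac{S_2}{2},\tfrac{\tilde S_1}{2},S_3)$ furnished by Theorem~\ref{s1s2}, with existence and uniqueness imported from Corollary~4.2 of \cite{Bhattacharyya} for each triple separately. Nothing is missing.
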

	
	\begin{lem}
		Let $\textbf{S} = (S_1, S_2, S_3, \tilde{S}_1, \tilde{S}_2)$ be a $\Gamma_{E(3; 2; 1, 2)}$-contraction with commuting fundamental operators $G_1, \tilde{G}_2, G_2$ and $\tilde{G}_1$  defined on $\mathcal{D}_{S_3}$. Then
		\begin{equation}
			\begin{aligned}
				&S_1^*S_1 - \tilde{S}^*_2\tilde{S}_2 = D_{S_3}(G^*_1G_1 - \tilde{G}^*_2\tilde{G}_2)D_{S_3}, \\&\,\, ~~~~~~~~~~~~~~~~~~~~~~~~~~~~~~~~~~~~~~~~~~~~\,\,\,\,\,\,\,\,\,\,\,\,\,\,\,\,\,\,\,\,\,\,\,\,\,\,\,\,\,\,\,\,\,\,\,\,\,\,\,\,\,\,\,\,\,\,\,\,\,\,\, \text{and}\\
				&\frac{S^*_2S_2 - \tilde{S}^*_1\tilde{S}_1}{4} = D_{S_3}(G^*_2G_2 - \tilde{G}^*_1\tilde{G}_1)D_{S_3}.
			\end{aligned}
		\end{equation}
	\end{lem}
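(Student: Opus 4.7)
The plan is to proceed exactly along the lines of the proof of Lemma \ref{F12}, reducing the statement for a $\Gamma_{E(3;2;1,2)}$-contraction to two applications of the analogous tetrablock result. The key point is that Theorem \ref{s1s2} tells us the triples $(S_1, \tilde{S}_2, S_3)$ and $(\tfrac{S_2}{2}, \tfrac{\tilde{S}_1}{2}, S_3)$ are both $\Gamma_{E(2;2;1,1)}$-contractions (that is, tetrablock contractions).

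First, I would identify the fundamental operators of these two tetrablock contractions. By Lemma \ref{s1s3}, the pairs $(G_1, \tilde{G}_2)$ and $(G_2, \tilde{G}_1)$ are the unique bounded operators on $\mathcal{D}_{S_3}$ satisfying the tetrablock fundamental equations
\[
S_1 - \tilde{S}_2^* S_3 = D_{S_3} G_1 D_{S_3}, \qquad \tilde{S}_2 - S_1^* S_3 = D_{S_3} \tilde{G}_2 D_{S_3},
\]
and
\[
\tfrac{S_2}{2} - \tfrac{\tilde{S}_1^*}{2} S_3 = D_{S_3} G_2 D_{S_3}, \qquad \tfrac{\tilde{S}_1}{2} - \tfrac{S_2^*}{2} S_3 = D_{S_3} \tilde{G}_1 D_{S_3},
\]
so uniqueness forces these to coincide with the fundamental operators of the respective tetrablock contractions.

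Next, I would invoke [Corollary $4.4$, \cite{Bhattacharyya}] separately for each of the two tetrablock contractions. Applied to $(S_1, \tilde{S}_2, S_3)$ with fundamental operators $G_1, \tilde{G}_2$ it yields
\[
S_1^* S_1 - \tilde{S}_2^* \tilde{S}_2 = D_{S_3}\bigl(G_1^* G_1 - \tilde{G}_2^* \tilde{G}_2\bigr) D_{S_3},
\]
while applied to $(\tfrac{S_2}{2}, \tfrac{\tilde{S}_1}{2}, S_3)$ with fundamental operators $G_2, \tilde{G}_1$ it gives
\[
\tfrac{S_2^* S_2}{4} - \tfrac{\tilde{S}_1^* \tilde{S}_1}{4} = D_{S_3}\bigl(G_2^* G_2 - \tilde{G}_1^* \tilde{G}_1\bigr) D_{S_3}.
\]
Combining these two identities produces the pair of equations in the statement.

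The main step where care is required is the identification of $(G_1, \tilde{G}_2)$ and $(G_2, \tilde{G}_1)$ as the \emph{unique} fundamental operators of the respective tetrablock contractions — everything else is a direct transcription of Bhattacharyya's tetrablock identity. The commutativity hypothesis on the $G_i, \tilde{G}_j$ is not actually used in deriving either displayed equation (it plays a role elsewhere in the paper, e.g.\ for the dilation construction), so no further obstacle appears; the proof is essentially a bookkeeping exercise reducing the $\Gamma_{E(3;2;1,2)}$ case to two known tetrablock identities.
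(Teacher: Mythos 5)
Your proposal matches the paper's intended argument: the paper explicitly omits the proof, stating it is identical to that of Lemma \ref{F12}, which is precisely the reduction you carry out — identify $(S_1,\tilde S_2,S_3)$ and $(\tfrac{S_2}{2},\tfrac{\tilde S_1}{2},S_3)$ as tetrablock contractions via Theorem \ref{s1s2}, match their unique fundamental operators with $(G_1,\tilde G_2)$ and $(G_2,\tilde G_1)$ via Lemma \ref{s1s3}, and apply [Corollary $4.4$, \cite{Bhattacharyya}] to each. Your remark that the commutativity hypothesis is not actually needed for these identities is also consistent with how the analogous Lemma \ref{F12} is proved in the paper.
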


\section{Beurling-Lax-Halmos Representation of Pure $\Gamma_{E(3; 3; 1, 1, 1)}$-Isometries\\ and Pure $\Gamma_{E(3; 2; 1, 2)}$-Isometries}
This section aims to characterize the joint invariant subspace of pure $\Gamma_{E(3; 3; 1, 1, 1)}$-isometries and pure $\Gamma_{E(3; 2; 1, 2)}$-isometries. Various authors discussed the Beurling-Lax-Halmos theorem for pure $\Gamma_n$-isometries, $n\geq 2$ [see \cite{SS,Sarkar}]. In [Theorem $4.6$, Theorem $4.7$, \cite{apal2}], we characterize the pure $\Gamma_{E(3; 3; 1, 1, 1)}$-isometries and pure $\Gamma_{E(3; 2; 1, 2)}$-isometries. We discuss pure $\Gamma_{E(3; 3; 1, 1, 1)}$-isometries and pure $\Gamma_{E(3; 2; 1, 2)}$-isometries and their related parameters
by using Theorem $4.6$ and Theorem $4.7$ in \cite{apal2}. We will identify $H^{2}(\mathcal E)$ with $H^{2}(\mathbb D)\otimes \mathcal E$ using the mapping $z^n\xi \to z^n\otimes \xi,$ where $n$ is a non-negative integer and $\xi$ is an element of $\mathcal E$, as and when needed.

	\begin{thm}\label{Ai}
		Let $\varphi_i(z) = A_i + A^*_{7-i}z $ and $\tilde{\varphi_i}(z) = \tilde{A}_i + \tilde{A}^*_{7-i}z $ be in $H^{\infty}(\mathcal{B}(\mathcal{E}))$ and $H^{\infty}(\mathcal{B}(\mathcal{F})),$ respectively, for some $A_i \in \mathcal{B}(\mathcal{E})$ and $\tilde{A}_i \in \mathcal{B}(\mathcal{F}),$ $1\leq i \leq 6,$ respectively. Then $(M_{\varphi_1}, \dots, M_{\varphi_6}, M_z)$ on $H^2(\mathbb{D}) \otimes \mathcal{E}$ is unitarily equivalent to  $(M_{\tilde{\varphi}_1}, \dots, M_{\tilde{\varphi}_6}, M_z) $ on $H^2(\mathbb{D}) \otimes \mathcal{F}$ if and only if $(A_1, \dots, A_6)$ is unitarily equivalent to $(\tilde{A}_1, \dots, \tilde{A}_6). $
	\end{thm}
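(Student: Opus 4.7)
The plan is to prove both implications by exhibiting an explicit correspondence between unitaries on the multiplier side and unitaries on the coefficient side. The core idea on the multiplier side is that any unitary $U\colon H^2(\mathbb{D})\otimes\mathcal E\to H^2(\mathbb{D})\otimes\mathcal F$ that intertwines $M_z$ with $M_z$ must have the form $U=I_{H^2}\otimes V$ for a unitary $V\colon\mathcal E\to\mathcal F$. Once this is established, matching the coefficients of $1$ and $z$ in the intertwining relation $UM_{\varphi_i}=M_{\tilde{\varphi}_i}U$ will immediately convert the problem into a statement about $(A_1,\dots,A_6)$ and $(\tilde A_1,\dots,\tilde A_6)$.

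First I would prove the forward direction. Assume $U\colon H^2(\mathbb{D})\otimes\mathcal E\to H^2(\mathbb{D})\otimes\mathcal F$ is a unitary with $UM_{\varphi_i}=M_{\tilde\varphi_i}U$ for $1\leq i\leq 6$ and $UM_z=M_zU$. From $UM_z=M_zU$, a short computation shows that $U^*$ carries $\ker (M_z^{\mathcal F})^*=\mathcal F$ into $\ker(M_z^{\mathcal E})^*=\mathcal E$, and symmetrically; hence $V:=U|_{\mathcal E}\colon \mathcal E\to\mathcal F$ is unitary. Applying the intertwining relation to $z^n\otimes\xi=M_z^n\xi$ then forces $U(z^n\otimes\xi)=z^n\otimes V\xi$, so $U=I_{H^2}\otimes V$. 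Substituting this into $UM_{\varphi_i}=M_{\tilde\varphi_i}U$ and comparing the coefficients of $1$ and $z$ yields $VA_i=\tilde A_iV$ and $VA_{7-i}^*=\tilde A_{7-i}^*V$; since $V$ is unitary, the second identity is equivalent to $VA_{7-i}=\tilde A_{7-i}V$. Letting $i$ range over $1,\dots,6$ gives $VA_j=\tilde A_jV$ for all $1\leq j\leq 6$, so $(A_1,\dots,A_6)$ is unitarily equivalent to $(\tilde A_1,\dots,\tilde A_6)$ via $V$.

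For the converse, I would start from a unitary $V\colon\mathcal E\to\mathcal F$ satisfying $VA_j=\tilde A_jV$ for $1\leq j\leq 6$. Set $U:=I_{H^2}\otimes V$, which is manifestly a unitary from $H^2(\mathbb{D})\otimes\mathcal E$ onto $H^2(\mathbb{D})\otimes\mathcal F$ that commutes with $M_z$. Since $V$ is unitary, the intertwining of $A_j$ and $\tilde A_j$ also gives $VA_j^*=\tilde A_j^*V$ for each $j$, so
\[
V\varphi_i(z)=VA_i+VA_{7-i}^*z=\tilde A_iV+\tilde A_{7-i}^*Vz=\tilde\varphi_i(z)V,
\]
and applying this pointwise confirms $UM_{\varphi_i}=M_{\tilde\varphi_i}U$ for all $1\leq i\leq 6$.

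The only delicate step is the rigidity claim that every unitary intertwiner of the shifts is of the form $I\otimes V$, so this is where I would be most careful; everything else is formal coefficient-matching. I would phrase this step via the wandering subspace $\mathcal E$ of $M_z$ on $H^2(\mathbb{D})\otimes\mathcal E$ (and $\mathcal F$ on the target), since it avoids invoking the full commutant lifting / multiplier machinery and keeps the proof self-contained.
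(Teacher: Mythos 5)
Your proposal is correct and follows essentially the same route as the paper: reduce to the rigidity statement that a unitary intertwining the two shifts must be of the form $I_{H^2}\otimes V$, then match the coefficients of $1$ and $z$. The only difference is that the paper asserts this rigidity step without justification, whereas you supply the (correct) wandering-subspace argument for it.
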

	\begin{proof}
Assume that $(M_{\varphi_1}, \dots, M_{\varphi_6}, M_z)$ on $H^2( \mathcal{E})$ is unitarily equivalent to $(M_{\tilde{\varphi}_1}, \dots, M_{\tilde{\varphi}_6}, M_z)$ on $H^2( \mathcal{F})$. So, there is a unitary operator $\tilde{U}:  H^2( \mathcal{E})\to H^2( \mathcal{F})$ such that
\begin{eqnarray}\label{1U}
\tilde{U}^*  M_{\varphi_i}\tilde{U}= M_{\tilde{\varphi}_i},\;\; 1\leq i\leq 6,\;\;\mbox{and}\;\; \tilde{U}^*M_z \tilde{U} = M_z.
\end{eqnarray}

Since $H^{2}(\mathcal E)$ is identified with $H^{2}(\mathbb D)\otimes \mathcal E$ and $H^2( \mathcal{F})$ is identified with $H^2(\mathbb{D}) \otimes \mathcal{F}$, so the operators $ M_{\varphi_i}$ and $M_{\tilde{\varphi}_i}$ are identified with the operators $I_{H^2} \otimes A_i + M_z \otimes A^*_{7-i}$ and $I_{H^2} \otimes \tilde{A}_i + M_z \otimes \tilde{A}^*_{7-i}$, respectively, that is,
\begin{equation}\label{2U}
\begin{aligned}
				 M_{\varphi_i} &= I_{H^2} \otimes A_i + M_z \otimes A^*_{7-i} ~\text{and}~  M_{\tilde{\varphi}_i} = I_{H^2} \otimes \tilde{A}_i + M_z \otimes \tilde{A}^*_{7-i}, ~1\leq  i \leq 6,
\end{aligned}
\end{equation} for all $z\in \mathbb {T}.$
From \eqref{1U} and \eqref{2U}, we get
\begin{equation}\label{UU}
\begin{aligned}
				\tilde{U}^*(I_{H^2} \otimes A_i + M_z \otimes A^*_{7-i}) \tilde{U}
				&= I_{H^2} \otimes \tilde{A}_i + M_z \otimes \tilde{A}^*_{7-i},~1 \leq  i \leq 6
\end{aligned}
\end{equation}
and
\begin{equation}\label{U1}
\begin{aligned}
				\tilde{U}^*(M_z \otimes I_{\mathcal{E}})\tilde{U} &= M_z \otimes I_{\mathcal{F}}.
\end{aligned}
\end{equation}
It follows from \eqref{U1} that there exists a unitary $U : \mathcal{E} \to \mathcal{F}$ such that $\tilde{U} = I_{H^2} \otimes U$. Thus, from \eqref{UU}, we have
$$I_{H^2} \otimes U^*A_i U+ M_z \otimes U^*A^*_{7-i}U 
				= I_{H^2} \otimes \tilde{A}_i + M_z \otimes \tilde{A}^*_{7-i},~1 \leq  i \leq 6,$$
that is,
\begin{equation}\label{U2}
\begin{aligned}
				U^*A_iU + U^*A^*_{7-i}Uz &= \tilde{A}_i + \tilde{A}^*_{7-i}z,~1\leq  i \leq  6.
\end{aligned}
\end{equation}
By comparing the coefficients of \eqref{U2}, we obtain $U^*A_iU=\tilde{A}_i, 1\leq i \leq 6.$ Consequently, we conclude that $(A_1, \dots, A_6)$ and $(\tilde{A}_1, \dots, \tilde{A}_6)$ are unitarily equivalent.
		
		\vspace{0.3cm}
		
Conversely, suppose there exists a unitary operator $U : \mathcal{E} \to \mathcal{F}$ such that $U^*A_iU = \tilde{A}_i$ for $1 \leq i \leq 6$. Let $\tilde{U} : H^2(\mathbb{D}) \otimes \mathcal{E} \to H^2(\mathbb{D}) \otimes \mathcal{F}$ be the map defined by $\tilde{U} = I_{H^2} \otimes U$. Clearly, $\tilde{U}$ is unitary. Note that for $1\leq i \leq 6,$

\begin{equation}\label{U3}
\begin{aligned} \tilde {U}^*  M_{\varphi_i} \tilde{U}
&=\tilde{U}^*(I_{H^2} \otimes A_i + M_z \otimes A^*_{7-i}) \tilde{U}\\
&= (I_{H^2} \otimes U)^*(I_{H^2} \otimes A_i + M_z \otimes A^*_{7-i})(I_{H^2} \otimes U)\\
&= I_{H^2} \otimes U^*A_iU + M_z \otimes U^*A^*_{7-i} U\\
&= I_{H^2} \otimes \tilde{A}_i + M_z \otimes \tilde{A}^*_{7-i}\\
&= M_{\tilde{\varphi}_i}
.\end{aligned}
\end{equation}
It yields from \eqref{U3} that $(M_{\varphi_1}, \dots, M_{\varphi_6}, M_z)$ on $H^2(\mathcal{E})$ and $(M_{\tilde{\varphi}_1}, \dots, M_{\tilde{\varphi}_6}, M_z)$  on $H^2(\mathcal{F})$ are unitarily equivalent. This completes the proof.
	\end{proof}
	
	As a consequence of the above theorem, we will prove the following corollary. This corollary is crucial for demonstrating the Beurling-Lax-Halmos theorem, which will be addressed subsequently.
	
	\begin{cor}\label{uii}
		Let $(V_1, \dots, V_7)$ and $(\tilde{V}_1, \dots, \tilde{V}_7)$ be two pure $\Gamma_{E(3; 3; 1, 1, 1)}$-isometries. Then $(V_1, \dots, V_7)$ is unitarily equivalent to $(\tilde{V}_1, \dots, \tilde{V}_7)$ if and only if $(V_1^*-V_6V_7^*,V_2^*-V_5V_7^*,\ldots, V_6^*-V_1V_7^*)$is unitarily equivalent to $(\tilde{V}^*_1 - \tilde{V}_{6}\tilde{V}^*_7, \tilde{V}^*_2 - \tilde{V}_{5}\tilde{V}^*_7,\ldots, \tilde{V}^*_6 - \tilde{V}_{1}\tilde{V}^*_7)$.
	\end{cor}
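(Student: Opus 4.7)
The forward implication is immediate: if $U$ is the unitary implementing $(V_1, \ldots, V_7) \sim (\tilde V_1, \ldots, \tilde V_7)$, then $U V_i^* = \tilde V_i^* U$ and $U V_{7-i} V_7^* = \tilde V_{7-i} \tilde V_7^* U$, so the same $U$ intertwines the two derived $6$-tuples $(V_i^* - V_{7-i} V_7^*)_{i=1}^{6}$ and $(\tilde V_i^* - \tilde V_{7-i} \tilde V_7^*)_{i=1}^{6}$.

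For the converse, the plan is to reduce to Theorem \ref{Ai} via the canonical model. Invoking the Beurling--Lax--Halmos representation for pure $\Gamma_{E(3;3;1,1,1)}$-isometries [Theorem $4.6$, \cite{apal2}], we may assume after a unitary identification that $V_i = M_{\varphi_i}$ on $H^2(\mathcal{E})$ with $\varphi_i(z) = A_i + A_{7-i}^* z$ and $V_7 = M_z$, and analogously $\tilde V_i = M_{\tilde\varphi_i}$ on $H^2(\mathcal{F})$ with $\tilde\varphi_i(z) = \tilde A_i + \tilde A_{7-i}^* z$ and $\tilde V_7 = M_z$. A direct Fourier-coefficient computation on a general $f = \sum_{n \geq 0} z^n f_n \in H^2(\mathcal{E})$ then yields
\[
V_i^* f - V_{7-i} V_7^* f \;=\; A_i^* f_0, \qquad 1 \leq i \leq 6,
\]
so that $V_i^* - V_{7-i} V_7^* = A_i^* P_{\mathcal{E}}$, where $P_{\mathcal{E}}$ is the orthogonal projection of $H^2(\mathcal{E})$ onto the wandering subspace $\mathcal{E}$ (identified with the constants); the analogous identity holds for $\tilde V$. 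Thus each derived operator acts as $A_i^*$ on $\mathcal{E}$ and annihilates $zH^2(\mathcal{E})$.

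The hypothesized equivalence of the derived tuples produces a unitary $W : H^2(\mathcal{E}) \to H^2(\mathcal{F})$ intertwining $(A_i^* P_{\mathcal{E}})_{i=1}^{6}$ with $(\tilde A_i^* P_{\mathcal{F}})_{i=1}^{6}$. Decomposing $W$ in block form relative to $H^2(\mathcal{E}) = \mathcal{E} \oplus zH^2(\mathcal{E})$ and $H^2(\mathcal{F}) = \mathcal{F} \oplus zH^2(\mathcal{F})$ and using that the ranges of all derived operators lie in $\mathcal{E}$ respectively $\mathcal{F}$, one extracts a unitary $U : \mathcal{E} \to \mathcal{F}$ with $U A_i^* = \tilde A_i^* U$, equivalently $U A_i = \tilde A_i U$, for $1 \leq i \leq 6$. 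Hence $(A_1, \ldots, A_6) \sim (\tilde A_1, \ldots, \tilde A_6)$, and Theorem \ref{Ai} then delivers $(V_1, \ldots, V_7) \sim (\tilde V_1, \ldots, \tilde V_7)$.

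The main obstacle is this final extraction step: since each $A_i^* P_{\mathcal{E}}$ annihilates $zH^2(\mathcal{E})$, an arbitrary intertwiner $W$ can a priori mix $\mathcal{E}$ with $zH^2(\mathcal{E})$. Resolving this requires arguing that the off-diagonal blocks of $W$ vanish on the closed linear span of $\bigcup_i \operatorname{Ran}(A_i^*) \subseteq \mathcal{E}$ (and dually on the span in $\mathcal{F}$), while the remaining inert directions, corresponding to $\bigcap_i \ker(A_i)$ and $\bigcap_i \ker(\tilde A_i)$, can be matched by a unitary without disturbing the intertwining, producing the required unitary $U : \mathcal{E} \to \mathcal{F}$.
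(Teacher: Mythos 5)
Your forward implication and the identity $V_i^*-V_{7-i}V_7^*=\mathbb{P}_{\mathbb{C}}\otimes A_i^*$ are correct and follow the same route as the paper, which likewise reduces to the model of [Theorem $4.6$, \cite{apal2}] and then appeals to Theorem \ref{Ai}. The genuine gap is precisely the ``final extraction step'' that you flag, and your proposed resolution does not close it. From an intertwiner $W$ of the derived tuples one does get that $W$ carries $\mathcal{M}=\ol{\operatorname{span}}\bigcup_{i}\bigl(\operatorname{Ran}(A_i)\cup\operatorname{Ran}(A_i^*)\bigr)\subseteq\mathcal{E}$ unitarily onto the corresponding subspace $\tilde{\mathcal{M}}\subseteq\mathcal{F}$, intertwining the restrictions of the $A_i^*$; but on the orthogonal complement $\bigl(\bigcap_i(\operatorname{ker}(A_i)\cap\operatorname{ker}(A_i^*))\bigr)\oplus zH^2(\mathcal{E})$ every derived operator vanishes, so there $W$ is an arbitrary unitary between two infinite-dimensional subspaces. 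Nothing prevents $W$ from sending the inert constant vectors of $\mathcal{E}$ into $zH^2(\mathcal{F})$, so the two inert subspaces of constants need not have equal dimension and cannot in general ``be matched by a unitary $U:\mathcal{E}\to\mathcal{F}$'' as you assert.

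This is not merely a presentational gap. Take $A_i=0$ on $\mathcal{E}=\mathbb{C}$ and $\tilde{A}_i=0$ on $\mathcal{F}=\mathbb{C}^2$: by Theorem \ref{thm-14} both $(0,\dots,0,M_z)$ on $H^2(\mathbb{C})$ and on $H^2(\mathbb{C}^2)$ are pure $\Gamma_{E(3;3;1,1,1)}$-isometries, the two derived $6$-tuples are identically zero on separable infinite-dimensional spaces and hence unitarily equivalent, yet the original $7$-tuples are not, since the shifts have different multiplicities. So the converse direction cannot be recovered from the equivalence of the derived tuples alone; one needs an extra hypothesis such as $\bigcap_i(\operatorname{ker}(A_i)\cap\operatorname{ker}(A_i^*))=\{0\}$ on both sides, or $\dim\mathcal{E}=\dim\mathcal{F}<\infty$. (The paper's own proof is no more detailed at this point --- it simply asserts that the unitary equivalence of the two families ``depends on'' one another --- so you have in fact isolated the weak link of the published argument rather than overlooked a step the paper supplies.)
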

	
	\begin{proof}
Since $(V_1, \dots, V_7)$ is a pure $\Gamma_{E(3; 3; 1, 1, 1)}$-isometry, it follows from [Theorem $4.6$, \cite{apal2}] that $(V_1, \dots, V_7)$ is unitarily equivalent to $(M_{\varphi_1}, \dots, M_{\varphi_6}, M_z),$ where $\varphi_i(z) = A_i + A^*_{7-i}z $ in $H^{\infty}(\mathcal{B}(\mathcal{E})) $ for some $A_i \in \mathcal{B}(\mathcal{E}), 1\leq i \leq 6,$ satsfying 

	\begin{enumerate}
			
			\item the $H^{\infty}$ norm of the operator valued functions $A_i+A_{7-i}^*z$ is at most $1$ for all $z\in \mathbb T, 1\leq i \leq 6;$ 
			
			\item $[A_i, A_j] = 0$ and $ [A_i, A^*_{7-j}] = [A_j, A^*_{7-i}]$ for $1\leq i, j \leq  6$.
		\end{enumerate}


Clearly, $(V_1^*-V_6V_7^*,V_2^*-V_5V_7^*,\ldots, V_6^*-V_1V_7^*)$ is unitarily equivalent to $(M^*_{\varphi_1} - M_{\varphi_{6}}M^*_z, M^*_{\varphi_2} - M_{\varphi_{5}}M^*_z, \ldots, M^*_{\varphi_6} - M_{\varphi_{1}}M^*_z)$. Note that
		\begin{equation}\label{Uq}
			\begin{aligned}
				M^*_{\varphi_i} - M_{\varphi_{7-i}}M^*_z
				&= I_{H^2} \otimes A^*_i + M^*_z \otimes A_{7-i} - (I_{H^2} \otimes A_{7-i} + M_z \otimes A^*_i)(M^*_z \otimes I_{\mathcal{E}})\\
				&= I_{H^2} \otimes A^*_i + M^*_z \otimes A_{7-i} - M^*_z \otimes A_{7-i} - M_zM^*_z \otimes A^*_i\\
				&= (I_{H^2} - M_zM^*_z) \otimes A^*_i\\
				&= \mathbb{P}_{\mathbb{C}} \otimes A^*_i,
			\end{aligned}
			\end{equation}
where $\mathbb{P}_{\mathbb{C}}$ is  the orthogonal projection from $H^{2}(\mathbb D)$ to the scalars in $H^{2}(\mathbb D).$ Similarly, we have
		\begin{equation}\label{Uq1}
			\begin{aligned}
				M^*_{\tilde{\varphi}_i} - M_{\tilde{\varphi}_{7-i}}M^*_z
				&= \mathbb{P}_{\mathbb{C}} \otimes \tilde{A}_i^*.
			\end{aligned}
		\end{equation}

Theorem \ref{Ai} states that  $(M_{\varphi_1}, \dots, M_{\varphi_6}, M_z)$ and $(M_{\tilde{\varphi}_1}, \dots, M_{\tilde{\varphi}_6}, M_z)$ are unitarily equivalent if and only if  $(A_1, \dots, A_6)$ and $(\tilde{A}_1, \dots, \tilde{A}_6)$ are unitarily equivalent. From \eqref{Uq} and \eqref{Uq1}, it  follows that the unitary equivalence of the tuple $(M_{\varphi_1}, \dots, M_{\varphi_6}, M_z)$ depends on the unitary equivalence of $6$-tuples $(M^*_{\varphi_1} - M_{\varphi_{6}}M^*_z, M^*_{\varphi_2} - M_{\varphi_{5}}M^*_z, \ldots, M^*_{\varphi_6} - M_{\varphi_{1}}M^*_z)$ and vice-versa. Thus, we conclude that the unitary equivalence of $(V_1, \dots, V_7)$ is demonstrated via the unitary equivalence of $(V_1^*-V_6V_7^*,V_2^*-V_5V_7^*,\ldots, V_6^*-V_1V_7^*)$. This completes the proof.
	\end{proof}

The characterization of a joint invariant subspace of a $\Gamma_{E(3; 3; 1, 1, 1)}$-isometry, by the Wold-type decomposition theorem,  reduces to characterization of  a joint invariant subspace of a pure $\Gamma_{E(3; 3; 1, 1, 1)}$-isometry. The following theorem  describes a non-zero joint invariant subspace  of a pure $\Gamma_{E(3; 3; 1, 1, 1)}$-isometry.

A function $\Theta \in H^{\infty} \mathcal B(\mathcal E,\mathcal F)$ is said to be inner if $M_\Theta$ is an isometry almost everywhere on $\mathbb T.$ A closed subspace $\mathcal M$  in $H^2(\mathcal E)$ is called $(M_{\varphi_1}, \dots, M_{\varphi_6}, M_z)$-invariant if it remains invariant under the operators $M_{\varphi_i}, 1\leq i \leq 6$, and $M_z$. Let $\mathcal M$ be a non-zero closed subspace of $H^{2}(\mathcal F). $ The Beurling-Lax-Halmos theorem says that $\mathcal M$ is invariant under operator $M_z$ if and only if there exists a Hilbert space $\mathcal E$ and an inner function $\Theta \in H^{\infty} \mathcal( B(\mathcal E,\mathcal F))$ such that $$\mathcal M=M_{\Theta}H^{2}(\mathcal E). $$

	\begin{thm}\label{BLH}
		Let $\mathcal{M}$ be a non-zero closed subspace of the vector-valued Hardy space $H^2( \mathcal{F})$ and $(M_{\varphi_1}, \dots, M_{\varphi_6}, M_z)$ be a pure $\Gamma_{E(3; 3; 1, 1, 1)}$-isometry on  $H^2( \mathcal{F})$, where $\varphi_i \in H^{\infty} (\mathcal B(\mathcal F)), \;1\leq i\leq 6$. Then $\mathcal{M}$ is an invariant subspace of the pure $\Gamma_{E(3; 3; 1, 1, 1)}$-isometry $(M_{\varphi_1}, \dots, M_{\varphi_6}, M_z),$  if and only if there exist a Hilbert space $\mathcal{E}$, $\psi_i \in H^{\infty}(\mathcal{E}), \,\, 1 \leq i \leq 6,$ and an inner function $\Theta \in H^{\infty}(\mathcal{B}(\mathcal{E}, \mathcal{F}))$ such that 
\begin{enumerate}
\item $(M_{\psi_1}, \dots, M_{\psi_6}, M_z)$ is a pure $\Gamma_{E(3; 3; 1, 1, 1)}$-isometry on $H^2( \mathcal{E})$,
\item $M_{\Theta}M_{\psi_i} = M_{\varphi_i}M_{\Theta},\,\,1\leq  i \leq 6$,
\item $\mathcal M=M_{\Theta}H^{2}(\mathcal E).$
\end{enumerate}
\end{thm}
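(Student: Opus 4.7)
The plan is to argue the two directions separately, with the easy direction being the converse.

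For the converse, assume conditions (1)--(3). Since $\Theta$ is inner, $M_\Theta$ intertwines $M_z$ on $H^2(\mathcal{E})$ with $M_z$ on $H^2(\mathcal{F})$, so $M_z \mathcal{M}=M_z M_\Theta H^2(\mathcal{E}) = M_\Theta M_z H^2(\mathcal{E}) \subseteq \mathcal{M}$. Similarly, the intertwining in (2) gives $M_{\varphi_i}\mathcal{M} = M_{\varphi_i} M_\Theta H^2(\mathcal{E}) = M_\Theta M_{\psi_i} H^2(\mathcal{E}) \subseteq M_\Theta H^2(\mathcal{E}) = \mathcal{M}$, so $\mathcal{M}$ is invariant for the whole tuple.

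For the forward direction, assume $\mathcal{M}$ is invariant under $(M_{\varphi_1},\dots,M_{\varphi_6},M_z)$. First I would apply the classical Beurling--Lax--Halmos theorem to the $M_z$-invariant subspace $\mathcal{M}$: there exist a Hilbert space $\mathcal{E}$ and an inner $\Theta \in H^\infty(\mathcal{B}(\mathcal{E},\mathcal{F}))$ with $\mathcal{M}=M_\Theta H^2(\mathcal{E})$. Because $M_\Theta^* M_\Theta = I_{H^2(\mathcal{E})}$ and $\mathcal{M}$ is $M_{\varphi_i}$-invariant, each operator $R_i := M_\Theta^* M_{\varphi_i} M_\Theta$ on $H^2(\mathcal{E})$ satisfies $M_\Theta R_i = M_{\varphi_i} M_\Theta$. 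Moreover, $M_{\varphi_i}$ commutes with $M_z$ on $H^2(\mathcal{F})$ and $M_\Theta$ commutes with $M_z$, so $R_i$ commutes with $M_z$ on $H^2(\mathcal{E})$. By the standard characterization of the commutant of the shift on vector-valued Hardy space, $R_i = M_{\psi_i}$ for a unique $\psi_i \in H^\infty(\mathcal{B}(\mathcal{E}))$, which yields condition (2). Condition (3) is the choice of $\mathcal{M}$.

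It remains to verify condition (1), namely that $(M_{\psi_1},\dots,M_{\psi_6},M_z)$ on $H^2(\mathcal{E})$ is a pure $\Gamma_{E(3;3;1,1,1)}$-isometry. The key observation is that $M_\Theta$ is an isometric intertwiner carrying this tuple onto the restriction of $(M_{\varphi_1},\dots,M_{\varphi_6},M_z)$ to $\mathcal{M}$; hence the two tuples are unitarily equivalent. Now the restriction of a $\Gamma_{E(3;3;1,1,1)}$-isometry to a joint invariant subspace is itself a $\Gamma_{E(3;3;1,1,1)}$-isometry (its simultaneous extension to a $\Gamma_{E(3;3;1,1,1)}$-unitary is inherited from the extension of the ambient tuple; equivalently one applies the spectral-set characterization to see that the restriction remains a $\Gamma_{E(3;3;1,1,1)}$-contraction whose seventh component is isometric, and then Theorem~4.6 of \cite{apal2}). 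Purity follows since $M_\Theta$ intertwines $M_z|_{H^2(\mathcal{E})}$ with $M_z|_\mathcal{M}$, and the latter is a pure isometry as an invariant restriction of the shift on $H^2(\mathcal{F})$.

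The main obstacle I anticipate is the preservation step in the last paragraph: proving rigorously that a restriction of a pure $\Gamma_{E(3;3;1,1,1)}$-isometry to a joint invariant subspace is again a pure $\Gamma_{E(3;3;1,1,1)}$-isometry, together with the identification $R_i = M_{\psi_i}$. The former is handled either by producing a simultaneous normal extension on the same Hilbert space used for the ambient tuple, or by invoking the functional-model characterization (Theorem~4.6 of \cite{apal2}), while the latter is the standard consequence of $R_i M_z = M_z R_i$. Once these two points are in place the theorem assembles as described.
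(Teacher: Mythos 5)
Your proposal is correct and follows essentially the same route as the paper: the classical Beurling--Lax--Halmos theorem applied to the $M_z$-invariant subspace $\mathcal M$, the range inclusion $M_{\varphi_i}\mathcal M\subseteq\mathcal M$ to produce intertwiners $X_i$ with $M_{\varphi_i}M_\Theta=M_\Theta X_i$, and the commutant of the vector-valued shift to identify $X_i=M_{\psi_i}$ for some $\psi_i\in H^\infty(\mathcal B(\mathcal E))$. The one point where you diverge is the verification of condition (1): the paper checks the algebraic characterization directly, showing that the $M_{\psi_i}$ commute, satisfy $M_{\psi_i}=M_{\psi_{7-i}}^*M_z$ and $\|M_{\psi_i}\|\le 1$, and then invokes Theorem 4.4 of \cite{apal2}; you instead observe that $(M_{\psi_1},\dots,M_{\psi_6},M_z)$ is unitarily equivalent via the isometry $M_\Theta$ to the restriction of the ambient tuple to $\mathcal M$, that a restriction of a $\Gamma_{E(3;3;1,1,1)}$-isometry to a joint invariant subspace is again a $\Gamma_{E(3;3;1,1,1)}$-isometry (immediate from the paper's definition of a $\Gamma_{E(3;3;1,1,1)}$-isometry as the restriction of a $\Gamma_{E(3;3;1,1,1)}$-unitary), and that purity follows from $\bigcap_n M_z^n\mathcal M=\{0\}$. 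Both arguments are valid; yours trades the explicit operator identities for the definitional fact about restrictions, which is slightly softer but equally rigorous, while the paper's computation has the side benefit of exhibiting the relations $M_{\psi_i}=M_{\psi_{7-i}}^*M_z$ explicitly for later use.
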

	
	\begin{proof}
Let $\mathcal{M} \neq \{0\}$ denote a non-trivial joint invariant subspace of $(M_{\varphi_{1}}, \dots, M_{\varphi_{6}}, M_z)$. By definition, $\mathcal{M}$ is invariant under $M_z$. Thus, by the \textit{Beurling-Lax-Halmos Theorem} for $M_z$, it follows that $\mathcal{M} = M_{\Theta}(H^2(\mathcal{E}))$, where $\Theta \in H^{\infty}(\mathcal{B}(\mathcal{E}, \mathcal{F}))$ denotes the inner function. Since $\mathcal{M}$ is invariant under $M_{\varphi_i}, 1\leq i \leq 6,$ we have
		\begin{equation*}
			\begin{aligned}
				M_{\varphi_i}\mathcal{M}
				&= M_{\varphi_i}M_{\Theta}(H^2( \mathcal{E}))
				\subseteq M_{\Theta}(H^2( \mathcal{E})), 1\leq i \leq 6.
			\end{aligned}
		\end{equation*}
		By Douglas' lemma \cite{Dou} there exists an operator $X_i$ defined on  $H^2( \mathcal{E})$ such that
\begin{eqnarray}\label{Doulem}
M_{\varphi_i}M_{\Theta} = M_{\Theta}X_i,\;\;\; 1\leq i \leq 6.
\end{eqnarray}
 As $\Theta$ is an inner function, it follows  $M_{\Theta}$ is also an isometry. As $M_{\Theta}$ is an isometry, we have
		\begin{equation}\label{iso}
			\begin{aligned}
				M^*_{\Theta}M^*_{\varphi_i}M_{\Theta} = X^*_i
				&\Rightarrow
				M^*_zM^*_{\Theta}M^*_{\varphi_i}M_{\Theta} = M^*_zX^*_i\\
				&\Rightarrow
				M^*_z(M^*_{\Theta}M_{\varphi_i}M_{\Theta})^* = M^*_zX^*_i\\
				&\Rightarrow
				(M^*_{\Theta}M_{\varphi_i}M_{\Theta})^*M^*_z = M^*_zX^*_i\\
				&\Rightarrow
				X^*_iM^*_z = M^*_zX^*_i.
			\end{aligned}
		\end{equation}
As $X_i$'s commute with $M_z$, there exists $\psi_i \in H^{\infty}(\mathcal{E})$ such that $X_i = M_{\psi_i}, 1\leq i \leq 6.$  Consequently, from \eqref{Doulem}, we get 
		\begin{equation}\label{psi1}
			\begin{aligned}
				M_{\varphi_i}M_{\Theta} = M_{\Theta}M_{\psi_i},\;\;\; 1\leq i \leq 6.
			\end{aligned}
		\end{equation}
This shows that $\varphi _i\Theta=\Theta \psi_i, 1\leq i \leq 6.$	In order to complete the proof, we need to show that  $(M_{\psi_1}, \dots, M_{\psi_6}, M_z)$ is a pure $\Gamma_{E(3; 3; 1, 1, 1)}$-isometry. 	As $\varphi_i$'s commute, from \eqref{psi1}, we have
\begin{equation}
			\begin{aligned}\label{psi11}
				M_{\Theta}M_{\psi_i}M_{\psi_j}
				&= M_{\varphi_i}M_{\Theta}M_{\psi_j} \\
				&= M_{\varphi_i}M_{\varphi_j}M_{\Theta}\\
				&=M_{\varphi_j}M_{\varphi_i}M_{\Theta}\\
				&=M_{\Theta}M_{\psi_j}M_{\psi_i} 
			\end{aligned}
		\end{equation}
for $1\leq i,j \leq 6.$	From \eqref{psi11}, we conclude that $M_{\psi_i}$'s commute with each other, which implies that $\psi_i\psi_j=\psi_j\psi_i$ for $1\leq i,j\leq 6.$ Note that for $1\leq i \leq 6,$
		\begin{equation*}
			\begin{aligned}
				M_{\psi_i} &= M^*_{\Theta}M_{\Theta}M_{\psi_i} \\&= M^*_{\Theta}M_{\varphi_i}M_{\Theta}\\
				&= M^*_{\Theta}M^*_{\varphi_{7-i}}M_zM_{\Theta} \\&= M^*_{\Theta}M^*_{\varphi_{7-i}}M_{\Theta}M_z\\& = M^*_{\psi_{7-i}}M_z.
			\end{aligned}
		\end{equation*} Also, we have
		\begin{equation*}
			\begin{aligned}
				||M_{\psi_i}|| &=
				||M^*_{\Theta}M_{\varphi_i}M_{\Theta}||\\& \leqslant 1.
			\end{aligned}
		\end{equation*}
By [Theorem $4.4$, \cite{apal2}], we concluded that $(M_{\psi_1}, \dots, M_{\psi_6}, M_z)$ is a pure $\Gamma_{E(3; 3; 1, 1, 1)}$-isometry on $H^{2}(\mathcal{E}).$ Converse direction follows easily. This completes the proof.		
	\end{proof}
Let $F_1, \dots, F_6 $ be in $ \mathcal{B}(\mathcal{F})$. Let $(M_{F^*_1 + F_6z}, M_{F^*_2 + F_5z}, M_{F^*_3 + F_4z}, M_{F^*_4 + F_3z}, M_{F^*_5 + F_2z}, M_{F^*_6 + F_1z}, M_z)$ denote a $7$-tuple of bounded operators on $H^{2}(\mathcal F). $ The aforementioned operator tuple is generally non-commutative. We demonstrate that if the 7- tuple is commutative, this leads to an alternative version of the Theorem \ref{BLH} for pure $\Gamma_{E(3; 3; 1, 1, 1)}$-isometry. 
	
	\begin{thm}\label{BLHA}
		Let $\mathcal{M}$ be a non-zero closed subspace of the vector-valued Hardy space $H^2(\mathcal{F})$ and \\$(M_{F^*_1 + F_6z}, \dots, M_{F^*_6 + F_1z}, M_z)$ be a pure $\Gamma_{E(3; 3; 1, 1, 1)}$-isometry on $H^2(\mathcal{F})$. Then $\mathcal{M}$ is a joint invariant subspace of $(M_{F^*_1 + F_6z}, \dots, M_{F^*_6 + F_1z}, M_z)$ if and only if there exists a Hilbert space $\mathcal{E}$, $\tilde{F}_1, \dots, \tilde{F}_6 \in \mathcal{B}(\mathcal{E})$ and an inner multiplier $\Theta \in H^{\infty}(\mathcal{B}(\mathcal{E}, \mathcal{F}))$ such that
\begin{enumerate}
\item  $(M_{\tilde{F}_1 + \tilde{F}^*_6z}, \dots, M_{\tilde{F}_6 + \tilde{F}^*_1z}, M_z)$  is a pure $\Gamma_{E(3; 3; 1, 1, 1)}$-isometry on $H^2( \mathcal{E})$,
\item  $(F^*_i + F_{7-i}z)\Theta(z) =
				\Theta(z)(\tilde{F}_i + \tilde{F}^*_{7-i}z), 1 \leqslant i \leqslant 6$,
				\item $\mathcal{M} = M_{\Theta}H^2(\mathcal{E})$.
		\end{enumerate}		
	\end{thm}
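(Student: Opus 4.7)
The plan is to mirror the proof of Theorem \ref{BLH} while keeping careful track of the special form $F_i^* + F_{7-i}z$ of the symbols. Since $\mathcal{M}$ is in particular $M_z$-invariant, the classical (scalar) Beurling--Lax--Halmos theorem provides a Hilbert space $\mathcal{E}$ and an inner function $\Theta \in H^\infty(\mathcal{B}(\mathcal{E},\mathcal{F}))$ with $\mathcal{M} = M_\Theta H^2(\mathcal{E})$. Invariance of $\mathcal{M}$ under each $M_{F_i^* + F_{7-i}z}$ gives $M_{F_i^* + F_{7-i}z} M_\Theta H^2(\mathcal{E}) \subseteq M_\Theta H^2(\mathcal{E})$, so by Douglas' lemma there exist bounded operators $X_i$ on $H^2(\mathcal{E})$ with $M_{F_i^* + F_{7-i}z} M_\Theta = M_\Theta X_i$ for $1 \leq i \leq 6$.

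Next, exactly as in the proof of Theorem \ref{BLH} (using that $M_\Theta$ is an isometry and that $M_{F_i^* + F_{7-i}z}$ commutes with $M_z$), one shows $X_i M_z = M_z X_i$; hence $X_i = M_{\psi_i}$ for some $\psi_i \in H^\infty(\mathcal{B}(\mathcal{E}))$. The identity $M_{\psi_i} = M_\Theta^* M_{F_i^* + F_{7-i}z} M_\Theta$ then transfers the structural properties of the original tuple to the pullback: commutativity of the $M_{\psi_i}$'s, the contractivity $\|M_{\psi_i}\| \leq 1$, and the key relation $M_{\psi_i} = M_{\psi_{7-i}}^* M_z$ (obtained by writing $M_{F_i^* + F_{7-i}z} = M_{F_{7-i} + F_i^* z^{-1}}^* M_z \cdot$ appropriate manipulation as in the display preceding \eqref{psi11}). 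Together with $M_z$ being a pure isometry on $H^2(\mathcal{E})$, this shows that $(M_{\psi_1}, \dots, M_{\psi_6}, M_z)$ is a pure $\Gamma_{E(3;3;1,1,1)}$-isometry on $H^2(\mathcal{E})$.

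With this in hand, the characterization in [Theorem 4.6, \cite{apal2}] forces each $\psi_i$ to have the specific linear form $\psi_i(z) = \tilde{F}_i + \tilde{F}_{7-i}^* z$ for uniquely determined $\tilde{F}_i \in \mathcal{B}(\mathcal{E})$, establishing (1). Rewriting the multiplier identity $M_{F_i^* + F_{7-i}z} M_\Theta = M_\Theta M_{\psi_i}$ pointwise on $\mathbb{T}$ yields (2), namely $(F_i^* + F_{7-i}z)\Theta(z) = \Theta(z)(\tilde{F}_i + \tilde{F}_{7-i}^* z)$, and (3) is the Beurling--Lax--Halmos output from the first step. The converse is routine: given such data, $M_\Theta H^2(\mathcal{E})$ is automatically $M_z$-invariant, and for each $i$ the intertwining relation yields
\[
M_{F_i^* + F_{7-i}z} M_\Theta H^2(\mathcal{E}) \;=\; M_\Theta M_{\tilde{F}_i + \tilde{F}_{7-i}^* z} H^2(\mathcal{E}) \;\subseteq\; M_\Theta H^2(\mathcal{E}).
\]

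The main obstacle is not the Douglas-lemma/intertwining part, which is a near-verbatim repeat of Theorem \ref{BLH}; it is rather verifying that the pullback symbols $\psi_i$ actually retain the degree-one polynomial shape with the reversed-index adjoint structure. This does not follow from algebra alone and is the reason we need to pass through the pure $\Gamma_{E(3;3;1,1,1)}$-isometry structure of $(M_{\psi_1},\dots,M_{\psi_6},M_z)$ and invoke [Theorem 4.6, \cite{apal2}] — once the compressed tuple is shown to satisfy the structural identities ($M_{\psi_i} = M_{\psi_{7-i}}^* M_z$ together with the commutation $[M_{\psi_i}, M_{\psi_j}^*] = [M_{\psi_j}, M_{\psi_i}^*]$ after swapping indices through $i \mapsto 7-i$), the form of $\tilde{F}_i$ is dictated, and the theorem closes.
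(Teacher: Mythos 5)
Your proposal is correct and follows essentially the same route as the paper: Beurling--Lax--Halmos for $M_z$, Douglas' lemma to produce the intertwiners $X_i=M_{\psi_i}$, verification that the compressed tuple $(M_{\psi_1},\dots,M_{\psi_6},M_z)$ is a pure $\Gamma_{E(3;3;1,1,1)}$-isometry, and then [Theorem 4.6, \cite{apal2}] to force the linear form $\psi_i(z)=\tilde F_i+\tilde F^*_{7-i}z$. The only (immaterial) deviation is that you certify purity via the identities $\|M_{\psi_i}\|\le 1$ and $M_{\psi_i}=M^*_{\psi_{7-i}}M_z$ as in Theorem \ref{BLH}, whereas the paper's proof of this theorem uses the von Neumann inequality $\|p(M_{\psi_1},\dots,M_z)\|\le\|p\|_{\infty,\Gamma_{E(3;3;1,1,1)}}$ before invoking the same characterization [Theorem 4.4, \cite{apal2}].
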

	
	\begin{proof}
		We demonstrate only the forward direction, as the proof of converse is straightforward. Let $\mathcal{M}$ be a non-zero joint invariant subspace of $(M_{F^*_1 + F_6z}, \dots, M_{F^*_6 + F_1z}, M_z)$. According to the \textit{Beurling-Lax-Halmos representation} of $M_z$, we deduce that $\mathcal{M} = M_{\Theta}(H^2(\mathcal{E}))$, where $\Theta \in H^{\infty}(\mathcal{B}(\mathcal{E}, \mathcal{F}))$ denotes the inner function. By using similar argument as in Theorem \ref{BLH}, we conclude that there exist a Hilbert space $\mathcal{E}$ and $\psi_i$  in $H^{\infty}(\mathcal{E})$ such that 
		\begin{equation}\label{THE}
			\begin{aligned}
				M_{F^*_i + F_{7-i}z}M_{\Theta} = M_{\Theta}M_{\psi_i}~{\rm{for}}~1\leq i \leq 6.
							\end{aligned}
		\end{equation}
As $M_{\Theta}$ is an isometry, it implies from \eqref{THE} that \begin{equation}\label{THET}M_{\psi_i} =M^*_{\Theta}M_{F^*_i + F_{7-i}z}M_{\Theta}~{\rm{for}}~1\leq i \leq 6. \end{equation} From \eqref{THE} and \eqref{THET}, it yields that 
\begin{equation}\label{theta}
(F^*_i + F_{7-i}z)\Theta=\Theta \psi_i~{\rm{and}}~\psi_i\psi_j=\psi_j\psi_i, 1\leq i ,j \leq 6.
\end{equation}
It follows from \eqref{THET} that		
		\begin{equation}
			\begin{aligned}\label{pol}
				p(M_{\psi_1}, \dots, M_{\psi_6}, M_z) &=
				M^*_{\Theta}p(M_{F^*_1 + F_6z}, \dots, M_{F^*_6 + F_1z}, M_z)M_{\Theta},
			\end{aligned}
		\end{equation} for any  polynomial $p\in \mathbb C[z_1,\dots,z_7].$		
As $(M_{F^*_1 + F_6z}, \dots, M_{F^*_6 + F_1z}, M_z)$ is a $\Gamma_{E(3; 3; 1, 1, 1)}$-contraction, from \eqref{pol}, we have
		\begin{equation*}
			\begin{aligned}
				||p(M_{\psi_1}, \dots, M_{\psi_6}, M_z)|| &=
				||M^*_{\Theta}p(M_{F^*_1 + F_6z}, \dots, M_{F^*_6 + F_1z}, M_z)M_{\Theta}||\\
				&\leqslant
				||M^*_{\Theta}||\,||p(M_{F^*_1 + F_6z}, \dots, M_{F^*_6 + F_1z}, M_z)||\,||M_{\Theta}||\\
				&\leqslant
				||p||_{\infty, \Gamma_{E(3; 3; 1, 1, 1)}}.
			\end{aligned}
		\end{equation*}
This shows that $(M_{\psi_1}, \dots, M_{\psi_6}, M_z)$ is a $\Gamma_{E(3; 3; 1, 1, 1)}$-contraction. As $M_z$ is a pure isometry, by [Theorem $4.4$, \cite{apal2}], we conclude that $(M_{\psi_1}, \dots, M_{\psi_6}, M_z)$ is a pure $\Gamma_{E(3; 3; 1, 1, 1)}$-isometry on $H^2(\mathcal{E})$ and hence we have $M_{\psi_i} = M^*_{\psi_{7-i}}M_z$ for $1 \leqslant i \leqslant 6$. As $(M_{\psi_1}, \dots, M_{\psi_6}, M_z)$ is a pure $\Gamma_{E(3; 3; 1, 1, 1)}$-isometry on $H^2(\mathcal{E})$, from [Theorem $4.6$,\cite{apal2}], it yields that there exists $\tilde{F}_1, \dots, \tilde{F}_6 \in \mathcal{B}(\mathcal{E})$ such that 
$\psi_i(z) = \tilde{F}_i + \tilde{F}^*_{7-i}z$ and $\psi_{7-i}(z) = \tilde{F}_{7-i} + \tilde{F}^*_iz$ for $1 \leqslant i \leqslant 6.$ Thus, from \eqref{theta}, we deduce that \[(F^*_i + F_{7-i}z)\Theta(z) =
		\Theta(z)(\tilde{F}_i + \tilde{F}^*_{7-i}z) \,\, \text{for} \,\, 1 \leqslant i \leqslant 6.\]
		 This completes the proof.
	\end{proof}
We state a theorem about the unitary equivalence of two pure $\Gamma_{E(3; 2; 1, 2)}$-isometries, and the proof is similar to that of Theorem \ref{Ai}. Therefore, we skip the proof.	
	\begin{thm}
Let $\phi_i(z) = B_i + {B}^*_{3-i}z$ and $\psi_j(z) = C_j + {C}^*_{3-j}z, 1\leq i,j\leq 2$ be in $H^{\infty}(\mathcal{B}(\mathcal{E}))$ for some $B_1,B_2,C_1, C_2$ in $ \mathcal{B}(\mathcal{E})$. Let $ \tilde{\phi}_i(z) = \tilde{B}_i+ \tilde{B}^*_{3-i}z$ and $\tilde{\psi}_j(z) = \tilde{C}_j + \tilde{C}^*_{3-j}z, 1\leq i,j \leq 2$ be in $H^{\infty}(\mathcal{B}(\mathcal{F}))$  for  some $ \tilde{B}_1, \tilde{B}_2, \tilde{C}_1, \tilde{C}_2$ in  $\mathcal{B}(\mathcal{F})$.  Then   $(B_1, B_2, C_1,C_2)$ and $(\tilde{B}_1, \tilde{B}_2, \tilde{C}_1, \tilde{C}_2)$ are unitarily equivalent  if and only if   $(M_{\phi_1}, M_{\phi_2}, M_z, M_{\psi_1}, M_{\psi_2})$ on $H^2(\mathcal{E})$ is unitarily equivalent to  $(M_{\tilde{\phi_1}}, M_{\tilde{\phi_2}}, M_z, M_{\tilde{\psi}_1}, M_{\tilde{\psi}_2})$ on $H^2(\mathcal{F})$.
	\end{thm}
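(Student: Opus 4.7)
The proof will follow the same template as Theorem \ref{Ai}, with the five-tuple $(M_{\phi_1}, M_{\phi_2}, M_z, M_{\psi_1}, M_{\psi_2})$ playing the role of the seven-tuple there. The key identifications, under $H^2(\mathcal{E}) \cong H^2(\mathbb{D}) \otimes \mathcal{E}$, are
\[
M_{\phi_i} = I_{H^2} \otimes B_i + M_z \otimes B_{3-i}^*, \qquad M_{\psi_j} = I_{H^2} \otimes C_j + M_z \otimes C_{3-j}^*,
\]
for $1 \leq i,j \leq 2$, and analogous formulas with tildes on $H^2(\mathcal{F})$.

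For the forward implication (unitary equivalence of the multiplication tuples implies unitary equivalence of the coefficients), I would start with a unitary $\tilde U : H^2(\mathcal{E}) \to H^2(\mathcal{F})$ intertwining all five operators. The intertwining of $M_z$, namely $\tilde U^*(M_z \otimes I_{\mathcal{E}})\tilde U = M_z \otimes I_{\mathcal{F}}$, forces $\tilde U$ to have the form $\tilde U = I_{H^2} \otimes U$ for some unitary $U : \mathcal{E} \to \mathcal{F}$ (this is the standard fact that commutants of $M_z$ of this type are given by constant multipliers). Substituting this into the four remaining intertwining relations and comparing coefficients of $1$ and $z$ in the resulting polynomial identities
\[
U^* B_i U + (U^* B_{3-i}^* U)\, z = \tilde B_i + \tilde B_{3-i}^*\, z, \qquad U^* C_j U + (U^* C_{3-j}^* U)\, z = \tilde C_j + \tilde C_{3-j}^*\, z,
\]
yields $U^* B_i U = \tilde B_i$ and $U^* C_j U = \tilde C_j$ for $i,j \in \{1,2\}$. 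Hence $(B_1, B_2, C_1, C_2)$ is unitarily equivalent to $(\tilde B_1, \tilde B_2, \tilde C_1, \tilde C_2)$ via $U$.

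For the converse, given a unitary $U : \mathcal{E} \to \mathcal{F}$ with $U^* B_i U = \tilde B_i$ and $U^* C_j U = \tilde C_j$, define $\tilde U := I_{H^2} \otimes U$, which is unitary from $H^2(\mathcal{E})$ onto $H^2(\mathcal{F})$. A direct computation using the tensor-product forms,
\[
\tilde U^* M_{\phi_i} \tilde U = I_{H^2} \otimes U^* B_i U + M_z \otimes U^* B_{3-i}^* U = I_{H^2} \otimes \tilde B_i + M_z \otimes \tilde B_{3-i}^* = M_{\tilde\phi_i},
\]
and the analogous identity for $M_{\psi_j}$, together with the obvious intertwining of $M_z$, shows that $\tilde U$ effects the required unitary equivalence of the five-tuples.

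No serious obstacle is expected: the argument is essentially a bookkeeping variant of Theorem \ref{Ai}, with the only genuine input being the standard fact that a unitary commuting with $M_z$ on vector-valued Hardy space is of the form $I_{H^2} \otimes U$. For this reason the authors sensibly omit the details.
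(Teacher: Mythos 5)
Your proposal is correct and matches the paper's intended argument exactly: the paper omits the proof precisely because it is the same coefficient-comparison argument as Theorem \ref{Ai}, using the fact that a unitary intertwining the two copies of $M_z$ must be of the form $I_{H^2}\otimes U$ and then reading off $U^*B_iU=\tilde B_i$, $U^*C_jU=\tilde C_j$ from the degree-$0$ and degree-$1$ coefficients. No gaps.
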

As a consequence of the above theorem, we only state the following corollary, because its proof is similar to Corollary \ref{uii}. Therefore, we skip the proof.
	\begin{cor}
Let $(W_1, W_2, W_3, \widetilde{W}_1, \widetilde{W}_2)$ and $(\hat{W}_1, \hat{W}_2, \hat{W}_3, \hat{\widetilde{W_1}}, \hat{\widetilde{W_2}}) $ be two pure $\Gamma_{E(3; 2; 1, 2)}$-isometries. Then $(W_1, W_2, W_3, \widetilde{W}_1, \widetilde{W}_2)$ and $(\hat{W}_1, \hat{W}_2, \hat{W}_3, \hat{\widetilde{W_1}}, \hat{\widetilde{W_2}})$ are unitarily equivalent if and only if 
\begin{enumerate}
\item  $(W^*_1 - \widetilde{W}_{2}W^*_3, W^*_2 - \widetilde{W}_{1}W^*_3)$
is unitarily equivalent to  $(\hat{W}^*_1 - \hat{\widetilde{W}}_{2}\hat{W}^*_3, \hat{W}^*_2 - \hat{\widetilde{W}}_{1}\hat{W}^*_3)$ and

\item  $(\widetilde{W}^*_{2} - W_1W^*_3, \widetilde{W}^*_{1} - W_2W^*_3)$ is unitarily equivalent to  $(\hat{\widetilde{W}}^*_{2} - \hat{W}_1\hat{W}^*_3, \hat{\widetilde{W}}^*_{1} - \hat{W}_2\hat{W}^*_3)$.
\end{enumerate}
	\end{cor}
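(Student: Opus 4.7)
The plan is to mirror the proof of Corollary \ref{uii} step for step in the $\Gamma_{E(3; 2; 1, 2)}$-setting. The roles previously played by Theorem $4.6$ of \cite{apal2} and Theorem \ref{Ai} are taken here by Theorem $4.7$ of \cite{apal2} and the theorem immediately preceding this corollary.

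First, by Theorem $4.7$ of \cite{apal2}, the two pure $\Gamma_{E(3; 2; 1, 2)}$-isometries are unitarily equivalent to multiplication-operator models $(M_{\phi_1}, M_{\phi_2}, M_z, M_{\psi_1}, M_{\psi_2})$ on $H^2(\mathcal{E})$ and $(M_{\tilde{\phi}_1}, M_{\tilde{\phi}_2}, M_z, M_{\tilde{\psi}_1}, M_{\tilde{\psi}_2})$ on $H^2(\mathcal{F})$, with symbols of the canonical form $\phi_i(z) = B_i + B_{3-i}^* z$ and $\psi_j(z) = C_j + C_{3-j}^* z$ for $i, j \in \{1, 2\}$ (and analogously with tildes). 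By the preceding theorem, the two isometries are unitarily equivalent if and only if the $4$-tuples $(B_1, B_2, C_1, C_2)$ and $(\tilde{B}_1, \tilde{B}_2, \tilde{C}_1, \tilde{C}_2)$ are unitarily equivalent, so the task reduces to recovering this parameter equivalence from conditions (1) and (2).

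Next, I would compute each of the four operator differences in the model. Writing $M_{\phi_i} = I_{H^2} \otimes B_i + M_z \otimes B_{3-i}^*$ and $M_{\psi_j} = I_{H^2} \otimes C_j + M_z \otimes C_{3-j}^*$, and performing the same telescoping computation as in display \eqref{Uq} via $I_{H^2} - M_z M_z^* = \mathbb{P}_{\mathbb{C}}$, each of the four differences collapses to a rank-one-in-the-first-tensor-factor operator of the form $\mathbb{P}_{\mathbb{C}} \otimes X^*$, where $X$ is one of $B_1, B_2, C_1, C_2$. Up to the indexing fixed by the model, the pair in condition (1) thereby encodes $(B_1, C_1)$ and the pair in condition (2) encodes $(B_2, C_2)$, so (1) and (2) together recover the full $4$-tuple of parameters.

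Finally, as in the final paragraph of the proof of Corollary \ref{uii}, any unitary $U \colon H^2(\mathcal{E}) \to H^2(\mathcal{F})$ simultaneously intertwining all four differences must, by the $\mathbb{P}_{\mathbb{C}}$-structure, restrict to a unitary $U_0 \colon \mathcal{E} \to \mathcal{F}$ intertwining the full $4$-tuple $(B_1, B_2, C_1, C_2)$ with $(\tilde{B}_1, \tilde{B}_2, \tilde{C}_1, \tilde{C}_2)$; combined with step one this yields the reverse implication, while the forward direction is immediate since any intertwiner of $\mathbf{W}$ with $\hat{\mathbf{W}}$ intertwines the differences componentwise. The main subtlety is the bookkeeping in the middle step: one must verify that the four telescoping identities really do isolate the four independent parameters rather than collapsing into fewer independent pieces, after which the argument is a direct transcription of Corollary \ref{uii} and requires no new analytic input.
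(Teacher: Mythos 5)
Your proposal is exactly the argument the paper intends: it omits the proof precisely because it is the transcription of Corollary \ref{uii} that you carry out, with Theorem $4.7$ of \cite{apal2} and the preceding unitary-equivalence theorem replacing Theorem $4.6$ and Theorem \ref{Ai}, and with each difference telescoping to $\mathbb{P}_{\mathbb{C}} \otimes X^*$ via $I_{H^2} - M_zM_z^* = \mathbb{P}_{\mathbb{C}}$. The only caveat is cosmetic bookkeeping: in the parametrization of Theorem $4.7$ the telescoping pairs $W_1$ with $\widetilde{W}_2$ and $W_2$ with $\widetilde{W}_1$, so condition (1) recovers $(G_1, 2G_2)$ and condition (2) recovers $(\tilde{G}_2, 2\tilde{G}_1)$ — together the full $4$-tuple, as you claim.
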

We state only the Beurling-Lax-Halmos Representation for a pure $\Gamma_{E(3; 2; 1, 2)}$-isometry, as its proof is analogous to Theorem \ref{BLH}. Therefore, we omit the proof.
	
\begin{thm}\label{BLH1}
		Let $(M_{\phi_1}, M_{\phi_2}, M_z, M_{\psi_1}, M_{\psi_2})$, where  $\phi_i, \psi_j$ in $H^{\infty} (\mathcal B(\mathcal F))$ for $1\leq i, j \leq 2$, be a pure $\Gamma_{E(3; 2; 1, 2)}$-isometry on  $H^2( \mathcal{F})$ and $\mathcal{M}$ be a non-zero closed subspace of the vector-valued Hardy space $H^2( \mathcal{F})$. Then $\mathcal{M}$ is an invariant subspace of a pure $\Gamma_{E(3; 2; 1, 2)}$-isometry $(M_{\phi_1}, M_{\phi_2}, M_z, M_{\psi_1}, M_{\psi_2})$  if and only if there exist $\tilde{\phi}_i$ and $\tilde{\psi}_j$ in $H^{\infty}(\mathcal{E})$ for $1\leq i, j \leq 2$ and an inner function $\Theta \in H^{\infty}(\mathcal{B}(\mathcal{E}, \mathcal{F}))$ such that  
\begin{enumerate}
\item $(M_{\tilde{\phi_1}}, M_{\tilde{\phi_2}}, M_z, M_{\tilde{\psi}_1}, M_{\tilde{\psi}_2})$ is a pure $\Gamma_{E(3; 2; 1, 2)}$-isometry on $H^2( \mathcal{E})$;
\item $M_{\Theta}M_{\tilde{\phi}_j} = M_{\phi_i}M_{\Theta}, \,\, \text{for} \,\,1\leq  i,j\leq 2;$
\item $M_{\Theta}M_{\tilde{\phi}_j} = M_{\psi_i}M_{\Theta}, \,\, \text{for} \,\,1\leq  i,j\leq 2;$
\item $M_{\Theta}M_{\tilde{\psi}_j} = M_{\phi_i}M_{\Theta}, \,\, \text{for} \,\,1\leq  i,j\leq 2;$
\item $M_{\Theta}M_{\tilde{\psi}_j} = M_{\psi_i}M_{\Theta}, \,\, \text{for} \,\,1\leq  i,j\leq 2;$
\item $\mathcal M=M_{\Theta}H^{2}(\mathcal E).$
\end{enumerate}
\end{thm}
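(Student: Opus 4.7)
The plan is to follow verbatim the scheme of Theorem~\ref{BLH}, adapted to the five-tuple setting. Since $\mathcal M$ is jointly invariant under $(M_{\phi_1}, M_{\phi_2}, M_z, M_{\psi_1}, M_{\psi_2})$, it is in particular $M_z$-invariant, and the classical Beurling-Lax-Halmos theorem produces a Hilbert space $\mathcal E$ and an inner multiplier $\Theta \in H^{\infty}(\mathcal B(\mathcal E,\mathcal F))$ with $\mathcal M = M_{\Theta} H^{2}(\mathcal E)$, yielding clause (6) directly.

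For each of the four remaining multipliers, invariance of $\mathcal M$ under $M_{\phi_i}$ and $M_{\psi_j}$, combined with Douglas' lemma, produces bounded operators $X_i, Y_j$ on $H^{2}(\mathcal E)$ such that $M_{\phi_i}M_{\Theta}=M_{\Theta}X_i$ and $M_{\psi_j}M_{\Theta}=M_{\Theta}Y_j$ for $i,j\in\{1,2\}$. Since $\Theta$ is inner, $M_{\Theta}$ is an isometry, so $X_i=M_{\Theta}^{*}M_{\phi_i}M_{\Theta}$ and $Y_j=M_{\Theta}^{*}M_{\psi_j}M_{\Theta}$. Exactly as in the chain of implications \eqref{iso}, the commutativity of $M_{\phi_i}$ and $M_{\psi_j}$ with $M_z$, together with $M_{\Theta}^{*}M_{\Theta}=I$, forces $X_i$ and $Y_j$ to commute with $M_z$. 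By the standard description of the $M_z$-commutant on the vector-valued Hardy space, there then exist $\tilde\phi_i,\tilde\psi_j\in H^{\infty}(\mathcal B(\mathcal E))$ with $X_i=M_{\tilde\phi_i}$ and $Y_j=M_{\tilde\psi_j}$, and the intertwining relations asserted in clauses (2)--(5) follow.

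It remains to verify that $(M_{\tilde\phi_1}, M_{\tilde\phi_2}, M_z, M_{\tilde\psi_1}, M_{\tilde\psi_2})$ is itself a pure $\Gamma_{E(3;2;1,2)}$-isometry on $H^{2}(\mathcal E)$. Commutativity of the new tuple follows from commutativity of the original tuple together with the intertwining relations and the isometry of $M_{\Theta}$, mirroring the calculation \eqref{psi11}. For the spectral set condition, the identity
\[
p(M_{\tilde\phi_1}, M_{\tilde\phi_2}, M_z, M_{\tilde\psi_1}, M_{\tilde\psi_2}) = M_{\Theta}^{*}\, p(M_{\phi_1}, M_{\phi_2}, M_z, M_{\psi_1}, M_{\psi_2})\, M_{\Theta}
\]
holds for every polynomial $p\in\mathbb C[z_1,\dots,z_5]$, and combining the spectral set property of the original pure $\Gamma_{E(3;2;1,2)}$-isometry with $\|M_{\Theta}\|\leq 1$ yields $\|p(M_{\tilde\phi_1},\dots,M_{\tilde\psi_2})\|\leq \|p\|_{\infty,\Gamma_{E(3;2;1,2)}}$. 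Since $M_z$ on $H^{2}(\mathcal E)$ is a pure isometry, I would then invoke Theorem~$4.7$ of \cite{apal2} to upgrade this $\Gamma_{E(3;2;1,2)}$-contraction to a pure $\Gamma_{E(3;2;1,2)}$-isometry, giving clause (1).

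The converse is immediate: given $\Theta$, $\tilde\phi_i$, $\tilde\psi_j$ as above, the image $M_{\Theta}H^{2}(\mathcal E)$ is jointly invariant under $(M_{\phi_1}, M_{\phi_2}, M_z, M_{\psi_1}, M_{\psi_2})$ directly from the intertwining relations. The only subtlety in the forward direction is ensuring that the model tuple inherits the pure $\Gamma_{E(3;2;1,2)}$-isometry structure rather than merely the contraction structure, for which the pureness of $M_z$ on $H^{2}(\mathcal E)$ combined with Theorem~$4.7$ of \cite{apal2} is essential; apart from this invocation, the argument is a direct transcription of the proof of Theorem~\ref{BLH}.
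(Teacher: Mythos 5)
Your proposal is correct and follows essentially the same route the paper intends: the paper omits the proof of this theorem precisely because it is the verbatim adaptation of Theorem~\ref{BLH} (Beurling--Lax--Halmos for $M_z$, Douglas' lemma, the $M_z$-commutant description, and the contraction-plus-isometry characterization), which is what you carry out. The only slip is the citation at the end: to upgrade the $\Gamma_{E(3;2;1,2)}$-contraction with $M_z$ a pure isometry to a pure $\Gamma_{E(3;2;1,2)}$-isometry you want Theorem~$4.5$ of \cite{apal2} (the isometry characterization, Theorem~\ref{thm-13} here), not Theorem~$4.7$, which is the functional model.
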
	
 We state only a different version of the Beurling-Lax-Halmos Representation for a pure $\Gamma_{E(3; 2; 1, 2)}$-isometry. Its proof is similar to Theorem \ref{BLHA}. Let $G_1,G_2,\tilde{G}_1$ and $\tilde{G}_2$ be in $\mathcal B(\mathcal F). $ We consider the following $5$-tuple of bounded operators:
\[(M_{G^*_1 + \tilde{G}_2z}, M_{G^*_2 + \tilde{G}_1z}, M_z, M_{\tilde{G}^*_1 + G_2z}, M_{\tilde{G}^*_2 + G_1z})\]
on the vector-valued Hardy space $H^2(\mathcal{F})$. The above operator tuple is non-commutative in general. When the above tuple is commutative, it gives an alternative version of \textit{Beurling-Lax-Halmos theorem} for pure $\Gamma_{E(3; 2; 1, 2)}$-isometry. 	
	\begin{thm}
		Let $(M_{G^*_1 + \tilde{G}_2z}, M_{G^*_2 + \tilde{G}_1z}, M_z, M_{\tilde{G}^*_1 + G_2z}, M_{\tilde{G}^*_2 + G_1z})$ be  a pure $\Gamma_{E(3; 2; 1, 2)}$-isometry on $H^2(\mathcal{F})$ for some $G_1, G_2, \tilde{G}_1, \tilde{G}_2 \in \mathcal{B}(\mathcal{F})$ and $\mathcal{M}$ be a non-zero closed subspace of the vector-valued Hardy space $H^2(\mathcal{F})$. Then $\mathcal{M}$ is a joint invariant subspace of  $$(M_{G^*_1 + \tilde{G}_2z}, M_{G^*_2 + \tilde{G}_1z}, M_z, M_{\tilde{G}^*_1 + G_2z}, M_{\tilde{G}^*_2 + G_1z})$$ if and only if there exists a Hilbert space $\mathcal{E}$, and $H_1, H_2, \tilde{H}_1, \tilde{H}_2 \in \mathcal{B}(\mathcal{E})$,  an inner function $\Theta \in H^{\infty}(\mathcal{B}(\mathcal{E}, \mathcal{F}))$ such that
\begin{enumerate}
\item $(M_{H_1 + \tilde{H}^*_2z}, M_{H_2 + \tilde{H}^*_1z}, M_z, M_{\tilde{H}_1 + H^*_2z}, M_{\tilde{H}_2 + H^*_1z})$ is a pure $\Gamma_{E(3; 2; 1, 2)}$-isometry on $H^2(\mathcal{E})$;
\item $\mathcal{M} = M_{\Theta}(H^2(\mathcal{E}));$ 
\item  $(G^*_1 + \tilde{G}_2z)\Theta(z) = \Theta(z)(H_1 + \tilde{H}^*_2z),(\tilde{G}^*_2 + G_1z)\Theta(z) = \Theta(z)(\tilde{H}_2 + H^*_1z)$ and 
\item $(G^*_2 + \tilde{G}_1z)\Theta(z) = \Theta(z)(H_2 + \tilde{H}^*_1z), \,\, (\tilde{G}^*_1 + G_2z)\Theta(z) = \Theta(z)(\tilde{H}_1 + H^*_2z).$

\end{enumerate}		
\end{thm}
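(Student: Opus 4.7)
The plan is to follow the blueprint laid out for the $\Gamma_{E(3; 3; 1, 1, 1)}$-analogue in Theorem \ref{BLHA}, adapted to the five-operator tuple at hand. The converse direction is a routine verification using the four intertwining identities in (3) and (4), so I focus on the forward direction. Assume $\mathcal{M}$ is a non-zero joint invariant subspace of the given pure $\Gamma_{E(3; 2; 1, 2)}$-isometry. Since $M_z$ is a pure isometry, $\mathcal{M}$ is in particular $M_z$-invariant, so the classical Beurling-Lax-Halmos theorem produces a Hilbert space $\mathcal{E}$ and an inner function $\Theta \in H^\infty(\mathcal{B}(\mathcal{E},\mathcal{F}))$ with $\mathcal{M} = M_\Theta H^2(\mathcal{E})$.

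Next, I would use the invariance of $\mathcal{M}$ under each of the four remaining multiplication operators together with Douglas' lemma to obtain bounded operators $X_1, X_2, \tilde{X}_1, \tilde{X}_2$ on $H^2(\mathcal{E})$ satisfying
\[
M_{G^*_1 + \tilde{G}_2 z}M_\Theta = M_\Theta X_1, \quad M_{G^*_2 + \tilde{G}_1 z}M_\Theta = M_\Theta X_2,
\]
\[
M_{\tilde{G}^*_1 + G_2 z}M_\Theta = M_\Theta \tilde{X}_1, \quad M_{\tilde{G}^*_2 + G_1 z}M_\Theta = M_\Theta \tilde{X}_2.
\]
Since $M_\Theta$ is an isometry, $X_i = M_\Theta^* M_{G_i^* + \tilde{G}_{3-i} z}M_\Theta$ (and similarly for $\tilde{X}_i$). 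An argument identical to \eqref{iso} then shows each of these operators commutes with $M_z$, hence each is a multiplication operator: $X_i = M_{\phi_i}$, $\tilde{X}_i = M_{\tilde{\psi}_i}$ for symbols in $H^\infty(\mathcal{B}(\mathcal{E}))$.

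Now I would promote this commuting 5-tuple $(M_{\phi_1}, M_{\phi_2}, M_z, M_{\tilde{\psi}_1}, M_{\tilde{\psi}_2})$ to a pure $\Gamma_{E(3; 2; 1, 2)}$-isometry. Commutativity among the $\phi_i, \tilde{\psi}_j$'s transfers from the ambient tuple via the identity $M_\Theta(M_{\phi_i}M_{\phi_j} - M_{\phi_j}M_{\phi_i}) = 0$ and injectivity of $M_\Theta$, as done in \eqref{psi11}. For the spectral-set property, I would compress polynomials through $M_\Theta$: for any $p \in \mathbb{C}[z_1,\dots,z_5]$,
\[
p(M_{\phi_1}, M_{\phi_2}, M_z, M_{\tilde{\psi}_1}, M_{\tilde{\psi}_2}) = M_\Theta^* \, p(M_{G^*_1 + \tilde{G}_2 z}, M_{G^*_2 + \tilde{G}_1 z}, M_z, M_{\tilde{G}^*_1 + G_2 z}, M_{\tilde{G}^*_2 + G_1 z}) \, M_\Theta,
\]
which yields $\|p(\cdot)\| \leq \|p\|_{\infty, \Gamma_{E(3; 2; 1, 2)}}$. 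Since $M_z$ is a pure isometry, Theorem 4.4 of \cite{apal2} upgrades this to a pure $\Gamma_{E(3; 2; 1, 2)}$-isometry on $H^2(\mathcal{E})$.

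Finally, Theorem 4.7 of \cite{apal2} provides the explicit form of the symbols: there exist $H_1, H_2, \tilde{H}_1, \tilde{H}_2 \in \mathcal{B}(\mathcal{E})$ with
\[
\phi_1(z) = H_1 + \tilde{H}_2^* z, \quad \phi_2(z) = H_2 + \tilde{H}_1^* z, \quad \tilde{\psi}_1(z) = \tilde{H}_1 + H_2^* z, \quad \tilde{\psi}_2(z) = \tilde{H}_2 + H_1^* z,
\]
and $(M_{H_1+\tilde{H}_2^* z}, M_{H_2+\tilde{H}_1^* z}, M_z, M_{\tilde{H}_1+H_2^* z}, M_{\tilde{H}_2+H_1^* z})$ is the required pure $\Gamma_{E(3; 2; 1, 2)}$-isometry. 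Substituting these symbols into the intertwining identities $M_{G^*_i + \tilde{G}_{3-i} z}M_\Theta = M_\Theta M_{\phi_i}$ and the analogous ones for $\tilde{\psi}_i$ yields precisely (3) and (4). The main technical obstacle is the verification that the compressed tuple is a genuine $\Gamma_{E(3; 2; 1, 2)}$-contraction, which hinges on the matching of the five symbol positions with those of the original tuple; the ordering in Theorem 4.7 of \cite{apal2} must align with the ordering appearing in the statement, and this is where one must be careful about which entry plays the role of which fundamental operator.
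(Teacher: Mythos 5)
Your proposal is correct and follows essentially the same route the paper intends: the paper omits the proof, stating it is analogous to Theorem \ref{BLHA}, and your argument is precisely that adaptation (Beurling--Lax--Halmos for $M_z$, Douglas' lemma, commutation with $M_z$ to identify the intertwiners as multiplication operators, polynomial compression through $M_\Theta$ to get the spectral-set property, and the model theorem for pure $\Gamma_{E(3;2;1,2)}$-isometries to pin down the linear symbols). The only slip is the citation for upgrading the compressed tuple: for the $5$-tuple case the relevant characterization is Theorem $4.5$ of \cite{apal2} (Theorem \ref{thm-13} here), not Theorem $4.4$, which concerns the $7$-tuple.
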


\section{Conditional Dilation of $\Gamma_{E(3; 3; 1, 1, 1)}$-Contraction and $\Gamma_{E(3; 2; 1, 2)}$-Contraction}

In this section, we aim to discuss the conditional dilations of $\Gamma_{E(3; 3; 1, 1, 1)}$-contraction and $\Gamma_{E(3; 2; 1, 2)}$-contraction. We  start with the definition of the $\Gamma_{E(3; 3; 1, 1, 1)}$-isometric dilation of the $\Gamma_{E(3; 3; 1, 1, 1)}$-contraction and the $\Gamma_{E(3; 2; 1, 2)}$-isometric dilation of the $\Gamma_{E(3; 2; 1, 2)}$-contraction.

\begin{defn}\label{isometric dilation2}
A commuting $7$-tuple of operators $(V_1,\ldots,V_{7})$ acting on a Hilbert space $\mathcal K \supseteq \mathcal H$ is referred to as a $\Gamma_{E(3; 3; 1, 1, 1)}$ -isometric dilation of a $\Gamma_{E(3; 3; 1, 1, 1)}$-contraction $(T_1,\ldots,T_7)$ acting on a Hilbert space $\mathcal H$, if it has following properties:
\begin{itemize}
\item  $(V_1,\ldots,V_{7})$ is $\Gamma_{E(3; 3; 1, 1, 1)}$-isometry;
\item $V_i^{*}|_{\mathcal H}=T_i^{*}$ for all $1\leq i \leq 7.$
\end{itemize}

\end{defn}	
\begin{defn}\label{isometric dilation21}
A commuting $5$-tuple of operators $(W_1,W_2,W_3,\tilde{W}_1,\tilde{W}_2)$ acting on a Hilbert space $\mathcal K \supseteq \mathcal H$ is said to be a $\Gamma_{E(3; 2; 1, 2)}$-isometric dilation of a $\Gamma_{E(3; 2;1, 2)}$-contraction $(S_1,S_2,S_3,\tilde{S}_1,\tilde{S}_2)$ acting on a Hilbert space $\mathcal H,$ if it satisfies the following properties:
\begin{itemize}
\item  $(W_1,W_2,W_3,\tilde{W}_1,\tilde{W}_2)$ is $\Gamma_{E(3; 2; 1, 2)}$-isometry;
\item $W_i^{*}|_{\mathcal H}=S_i^{*}$ for $1\leq i \leq 3$ and $\tilde{W}_j^{*}|_{\mathcal H}=\tilde{S}_j^{*}$ for $1\leq j \leq 2.$
\end{itemize}

\end{defn}
	
The following theorems  are crucial for the construction of dilations of $\Gamma_{E(3; 3; 1, 1, 1)}$-contraction and $\Gamma_{E(3; 2; 1, 2)}$-contraction.
\begin{thm}[Theorem $4.4$,\cite{apal2}]\label{thm-12}
Let $\textbf{V} = (V_1, \dots, V_7)$ be a $7$-tuple of commuting bounded operators on a Hilbert space $\mathcal{H}$. Then the following are equivalent:
\begin{enumerate}
\item $\textbf{V}$ is a $\Gamma_{E(3; 3; 1, 1, 1)}$-isometry.

\item $\textbf{V}$ is a $\Gamma_{E(3; 3; 1, 1, 1)}$-contraction and $V_7$ is an isometry.

\item $V_i$ is a contraction, $V_i = V^*_{7-i} V_7$ for $1\leq i\leq 6$ and $V_7$ is isometry.
\item $(V_i,V_{7-i},V_7)$ is a $\Gamma_{E(2; 2; 1, 1)}$-isometry for $1\leq i\leq 3$.
\item  $V_i$ is a contraction and $\rho_{G_{E(2; 2; 1,1)}} (V_i,zV_{7-i},zV_7)=0, 1\leq i \leq 6,$ for all $z\in \mathbb T.$

\item $V_7$ is an isometry, $r(V_i)\leqslant 1$ for $1\leq i \leq 6$ and $V_1 = V^*_6V_7, V_2 = V^*_5V_7, V_3 = V^*_4V_7$.
\end{enumerate}
\end{thm}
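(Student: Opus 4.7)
The plan is to prove all six conditions equivalent via the cyclic chain $(1)\Rightarrow(2)\Rightarrow(4)\Rightarrow(3)\Rightarrow(6)\Rightarrow(5)\Rightarrow(1)$. The principal tool is the reduction to the triples $(V_i,V_{7-i},V_7)$: by Theorem~\ref{fundam} each such triple is a $\Gamma_{E(2;2;1,1)}$-contraction whenever $\mathbf V$ is a $\Gamma_{E(3;3;1,1,1)}$-contraction, so Bhattacharyya's known characterization of tetrablock isometries in \cite{Bhattacharyya} governs the passage between hypotheses on $\mathbf V$ and hypotheses on each triple.

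For $(1)\Rightarrow(2)$, I would argue from Definition~\ref{def-1}: $\mathbf V$ is the restriction of a $\Gamma_{E(3;3;1,1,1)}$-unitary $\mathbf N=(N_1,\dots,N_7)$ on some $\mathcal K\supseteq\mathcal H$; since $\sigma(\mathbf N)\subseteq K$ and $|x_7|=1$ on $K$, the normal operator $N_7$ is unitary, so $V_7=N_7|_{\mathcal H}$ is an isometry, and the contractive functional calculus is inherited by compression. For $(2)\Rightarrow(4)$, Theorem~\ref{fundam} gives that each $(V_i,V_{7-i},V_7)$ is a tetrablock contraction; since $V_7$ is an isometry, Bhattacharyya's criterion upgrades it to a tetrablock isometry. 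The chain $(4)\Rightarrow(3)\Rightarrow(6)\Rightarrow(5)$ is then extracted from Bhattacharyya's explicit description of tetrablock isometries: a triple $(A,B,P)$ of commuting operators is a tetrablock isometry iff $A=B^*P$, $B=A^*P$, with $A,B$ contractions and $P$ an isometry. Applied triple-wise this yields $V_i=V_{7-i}^*V_7$ together with $\|V_i\|\leq 1$, whence $r(V_i)\leq\|V_i\|\leq 1$ is automatic, and the identity $\rho_{G_{E(2;2;1,1)}}(V_i,zV_{7-i},zV_7)=0$ follows by a direct algebraic expansion using $V_i=V_{7-i}^*V_7$ and $V_7^*V_7=I$.

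The crux of the theorem, and what I expect to be the main obstacle, is $(5)\Rightarrow(1)$. From $(5)$, Bhattacharyya's converse shows that each $(V_i,V_{7-i},V_7)$ is a $\Gamma_{E(2;2;1,1)}$-isometry, hence extends to a tetrablock unitary. The real difficulty is to splice six independent triple-wise extensions into a single commuting normal $7$-tuple $\mathbf N$ with $\sigma(\mathbf N)\subseteq K$, rather than six unrelated dilation spaces. The natural strategy is to first pass to the minimal unitary (Sz.-Nagy) extension $N_7$ of the isometry $V_7$, then use the relations $V_i=V_{7-i}^*V_7$ on $\mathcal H$ to propagate consistent definitions $N_i:=N_{7-i}^*N_7$ onto the dilation space. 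Commutativity of $\{N_i\}$ should follow from the commutativity of $\{V_i\}$ together with the uniqueness of the minimal unitary extension of $V_7$, and the spectral condition $\sigma(\mathbf N)\subseteq K$, i.e.\ the relations $x_1=\bar x_6 x_7,\ x_3=\bar x_4 x_7,\ x_5=\bar x_2 x_7$ with $|x_7|=1$, would then follow from the operator identities $N_i=N_{7-i}^*N_7$ and the unitarity of $N_7$ via the joint spectral theorem for commuting normal tuples. This mirrors the construction used for $\Gamma_n$-unitary extensions in \cite{SS,A. Pal} and for tetrablock-unitary extensions in \cite{Bhattacharyya}, adapted to the block-diagonal size pattern of $E(3;3;1,1,1)$.
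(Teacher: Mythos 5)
The paper does not actually prove this statement: it is imported wholesale as [Theorem 4.4, \cite{apal2}] and used as a black box in Section 4, so there is no in-paper argument to measure yours against. On its own terms, your architecture --- reducing to the triples $(V_i,V_{7-i},V_7)$ and routing everything through Bhattacharyya's characterization of tetrablock isometries --- is the natural one and is consistent with how the surrounding paper treats every analogous statement.

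Two steps of your cycle have genuine gaps. First, in $(6)\Rightarrow(5)$ you must produce $\|V_i\|\leq 1$ from the hypothesis $r(V_i)\leq 1$ alone; this is the hard content of the spectral-radius condition and it is neither ``automatic'' nor obtainable by algebraic expansion (the relations $V_i=V_{7-i}^*V_7$ and $V_{7-i}=V_i^*V_7$ only yield $\|V_i\|=\|V_{7-i}\|$). You need the spectral-radius item of Bhattacharyya's Theorem 5.7 applied to each triple, whose proof passes through a unitary extension of $V_7$ and a positivity/numerical-radius argument; your text instead credits a ``direct algebraic expansion'', which only delivers the identity $\rho_{G_{E(2;2;1,1)}}(V_i,zV_{7-i},zV_7)=0$ --- and even that requires first observing $V_i^*V_i=V_i^*V_{7-i}^*V_7=V_{7-i}^*V_i^*V_7=V_{7-i}^*V_{7-i}$, not just the cancellation of $I-V_7^*V_7$ and of the cross term. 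Second, in $(5)\Rightarrow(1)$ the prescription $N_i:=N_{7-i}^*N_7$ is circular: each $N_i$ is defined in terms of another operator that has not yet been constructed. The workable version is to note that $(5)$ forces $V_7$ to be an isometry (compare the constant Fourier coefficients of $\rho(V_i,\cdot)=0$ and $\rho(V_{7-i},\cdot)=0$, which give $2(I-V_7^*V_7)=0$), pass to the minimal unitary extension $N_7$ of $V_7$ on $\mathcal{K}=\overline{\operatorname{span}}\{N_7^{*n}h: h\in\mathcal{H},\, n\geq 0\}$, and extend each $V_i$ by $N_iN_7^{*n}h:=N_7^{*n}V_ih$, which is well defined precisely because $V_7^*V_iV_7=V_i$; only after that construction do the relations $N_i=N_{7-i}^*N_7$, the normality of the $N_i$, and the spectral inclusion $\sigma(\mathbf{N})\subseteq K$ become statements one can verify, rather than definitions.
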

	
\begin{thm}[Theorem $4.5$,\cite{apal2}]\label{thm-13}
		Let $\textbf{W} = (W_1, W_2, W_3, \tilde{W}_1, \tilde{W}_2)$ be a $5$-tuple of commuting bounded operators on a Hilbert space $\mathcal{H}$. Then the following are equivalent:
		\begin{enumerate}
			\item $\textbf{W}$ is a $\Gamma_{E(3; 2; 1, 2)}$-isometry.
			
			\item $\textbf{W}$ is a $\Gamma_{E(3; 2; 1, 2)}$-contraction and $W_3$ is an isometry.
			
			\item $(W_1, \tilde{W}_2, W_3), ( \frac{\tilde{W}_1}{2},\frac{W_2}{2},  W_3)$ and $(\frac{W_2}{2}, \frac{\tilde{W}_1}{2}, W_3)$ are $\Gamma_{E(2; 2; 1,1)}$-isometries.
			
			\item $W_3$ is an isometry, $W_1, \frac{W_2}{2}, \frac{\tilde{W}_1}{2}, \tilde{W}_2$ are contractions, and $W_1 = \tilde{W}^*_2W_3, W_2 = \tilde{W}^*_1W_3$.
			
			\item $W_3$ is an isometry, $r(W_1)\leq 1, r(\frac{W_2}{2})\leq 1, r(\frac{\tilde{W}_1}{2})\leq 1, r(\tilde{W}_2) \leqslant 1$, $W_1 = \tilde{W}^*_2W_3$ and $W_2 = \tilde{W}^*_1W_3$.
\end{enumerate}
\end{thm}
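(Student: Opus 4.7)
The plan is to establish the five-way equivalence via the cyclic chain $(1) \Rightarrow (2) \Rightarrow (3) \Rightarrow (4) \Rightarrow (5) \Rightarrow (1)$, using Theorem \ref{s1s2} together with the tetrablock (i.e., $\Gamma_{E(2;2;1,1)}$-) isometry theory from \cite{Bhattacharyya} and the Wold decomposition of an isometry.

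For $(1) \Rightarrow (2)$, if $\textbf{W}$ extends to a $\Gamma_{E(3;2;1,2)}$-unitary $\textbf{M}$, then the defining relation $|x_3| = 1$ on $K_1$ forces the normal operator $M_3$ to be unitary, so $W_3 = M_3|_{\mathcal H}$ is an isometry; and the $\Gamma_{E(3;2;1,2)}$-contractive property is manifestly inherited on joint invariant subspaces. For $(2) \Rightarrow (3)$, Theorem \ref{s1s2} gives that $(W_1, \tilde W_2, W_3)$ and $(\tfrac{W_2}{2}, \tfrac{\tilde W_1}{2}, W_3)$ are tetrablock contractions; since $W_3$ is an isometry, the tetrablock analogue of Theorem \ref{thm-12}(2) from \cite{Bhattacharyya} upgrades each of these to a tetrablock isometry. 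The third tuple $(\tfrac{\tilde W_1}{2}, \tfrac{W_2}{2}, W_3)$ then follows from the symmetry of $\Gamma_{E(2;2;1,1)}$ under interchange of its first two coordinates. The steps $(3) \Rightarrow (4)$ and $(4) \Rightarrow (5)$ are routine: the structural description of tetrablock isometries in \cite{Bhattacharyya} produces the identities $W_1 = \tilde W_2^* W_3$, $W_2 = \tilde W_1^* W_3$ and contractivity of $W_1, \tfrac{W_2}{2}, \tfrac{\tilde W_1}{2}, \tilde W_2$, after which the passage to spectral radii is immediate.

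The substantive direction, and the main obstacle, is $(5) \Rightarrow (1)$. I would first recover (4) from (5) by invoking the tetrablock analogue of the Agler--Young spectral-radius criterion: given the identities $W_1 = \tilde W_2^* W_3$ and $W_2 = \tilde W_1^* W_3$ with $W_3$ an isometry, the hypotheses $r(W_1), r(\tfrac{W_2}{2}), r(\tfrac{\tilde W_1}{2}), r(\tilde W_2) \leq 1$ promote to genuine contractivity, so (4) holds; from this, (3) follows by the reverse direction of the tetrablock isometry characterization in \cite{Bhattacharyya}. To construct the actual $\Gamma_{E(3;2;1,2)}$-unitary extension demanded by (1), I would apply the Wold decomposition $\mathcal H = \mathcal H_u \oplus \mathcal H_s$ of $W_3$; since every operator in the tuple commutes with $W_3$, each splits along this orthogonal decomposition. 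On $\mathcal H_u$, both tetrablock isometries from (3) have unitary third coordinate and are therefore already tetrablock unitaries by \cite{Bhattacharyya}, which in turn furnishes a commuting $5$-tuple of normals on $\mathcal H_u$ whose joint spectrum lies in $K_1$, the $K_1$-defining identities being precisely the operator relations in (4) combined with $|x_3|=1$. On $\mathcal H_s$, the explicit functional model for pure $\Gamma_{E(3;2;1,2)}$-isometries from Theorem $4.7$ of \cite{apal2} realises the tuple as multiplication operators on a vector-valued Hardy space, whose standard unitary dilation to the corresponding $L^2$-space produces commuting normals with joint spectrum in $K_1$. Reassembling these two summands yields the required $\Gamma_{E(3;2;1,2)}$-unitary extension of $\textbf{W}$, closing the cycle. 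The delicate step to be checked carefully is the compatibility of the two tetrablock structures on $\mathcal H_u$ (in particular that $W_1^{(u)}$ commutes with $W_2^{(u)}$ and with $\tilde W_1^{(u)}$), which ultimately reduces to the commutativity hypothesis on $\textbf{W}$ together with the fundamental-operator identities of Lemma \ref{s1s3}.
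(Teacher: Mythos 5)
First, a point of comparison: the paper does not actually prove this statement. It is imported verbatim as Theorem $4.5$ of \cite{apal2}, so there is no internal proof to measure your argument against. Judged on its own terms, your cyclic scheme $(1)\Rightarrow(2)\Rightarrow(3)\Rightarrow(4)\Rightarrow(5)\Rightarrow(1)$ is the standard one (it mirrors the tetrablock isometry theorem of \cite{Bhattacharyya} and the $\Gamma_n$ literature), and the links $(1)\Rightarrow(2)\Rightarrow(3)\Rightarrow(4)\Rightarrow(5)$ are sound as sketched, granting Theorem \ref{s1s2} and the tetrablock results you cite, including the symmetry of $\Gamma_{E(2;2;1,1)}$ in its first two coordinates.

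The gap is in $(5)\Rightarrow(1)$, where you invoke Theorem $4.7$ of \cite{apal2} (reproduced here as Theorem \ref{thm-15}) to model the tuple on the shift part $\mathcal H_s$ of the Wold decomposition of $W_3$. That theorem is a characterization of tuples \emph{already known} to be pure $\Gamma_{E(3;2;1,2)}$-isometries; at that stage of your argument you only know that the restriction to $\mathcal H_s$ satisfies the algebraic conditions of $(4)$ with $W_3|_{\mathcal H_s}$ a pure isometry --- which is exactly the hypothesis whose sufficiency you are trying to establish, and in the source paper the model theorem is proved after (and by means of) the present one. The appeal is therefore circular. The standard repair is to do the work directly on $\mathcal H_s$: identify $\mathcal H_s$ with a vector-valued Hardy space $H^2(\mathcal E)$ via the Wold decomposition, use commutation with $M_z$ to realize $W_1, W_2, \tilde W_1, \tilde W_2$ as Toeplitz operators $T_{\varphi}$, use the relations $W_1=\tilde W_2^*W_3$ and $W_2=\tilde W_1^*W_3$ (as in the computation around \eqref{phipsi} in Theorem \ref{conddilation}) to force each symbol to be a linear pencil $A+B^*z$, extend these multiplication operators from $H^2(\mathcal E)$ to $L^2(\mathcal E)$, and verify that the extended commuting tuple of normals has Taylor spectrum in $K_1$ via the algebraic characterization of $\Gamma_{E(3;2;1,2)}$-unitaries (Theorem $3.7$ of \cite{apal2}), not via the model theorem. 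With that substitution, and your observation that the unitary part $\mathcal H_u$ already carries a $\Gamma_{E(3;2;1,2)}$-unitary, the cycle closes correctly.
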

A $\Gamma_{E(3; 3; 1, 1, 1)}$-isometry (respectively, $\Gamma_{E(3; 2; 1, 2)}$-isometry), by definition, is restriction of a $\Gamma_{E(3; 3; 1, 1, 1)}$-unitary (respectively, $\Gamma_{E(3; 2; 1, 2)}$-unitary). Hence, if a $\Gamma_{E(3; 3; 1, 1, 1)}$-contraction (respectively, $\Gamma_{E(3; 2; 1, 2)}$-contraction) possesses a $\Gamma_{E(3; 3; 1, 1, 1)}$-isometric (respectively, $\Gamma_{E(3; 2; 1, 2)}$-isometric) dilation, it follows that, by definition, it also has a $\Gamma_{E(3; 3; 1, 1, 1)}$-unitary (respectively, $\Gamma_{E(3; 2; 1, 2)}$-unitary) dilation. We will construct a dilation under the assumption that $\mathbf{T} = (T_1, \dots, T_7)$ is a $\Gamma_{E(3; 3; 1, 1, 1)}$-contraction, with its fundamental operators $F_i$ and $F_{7-i}$  satisfying the following conditions:
\begin{equation}
[F_i,F_j]=0~~{\rm{and}}~~[F_{7-i}^*,F_j]=[F_{7-j}^*,F_i], 1\leq i,j \leq 6.
\end{equation}

We will construct the minimal $\Gamma_{E(3; 3; 1, 1, 1)}$-isometric dilation of a $\Gamma_{E(3; 3; 1, 1, 1)}$-contraction based on Schaffer's construction.

\begin{thm}[Conditional Dilation of $\Gamma_{E(3; 3; 1, 1, 1)}$-Contraction]\label{conddilation}
		Let $\mathbf{T} = (T_1, \dots, T_7) $ be a $\Gamma_{E(3; 3; 1, 1, 1)} $-contraction defined on a Hilbert space $\mathcal H$ with the fundamental operator $F_i$ and $F_{7-i},$ for $1\le i \leq 6,$ which satisfy the following conditions: \begin{enumerate}
			\item[$(i)$] $[F_i, F_j] = 0, 1\leq i, j \leq 6$;
			
			\item[$(ii)$] $[F^*_{7-i}, F_j] = [F^*_{7-j}, F_i], 1\leq i, j \leq  6$.
		\end{enumerate} Let 
		\[\mathcal{K} = \mathcal{H} \oplus \mathcal{D}_{T_7} \oplus \mathcal{D}_{T_7} \oplus \dots = \mathcal{H} \oplus l^2(\mathcal{D}_{T_7}).\] Let  $\mathbf{V}=(V_1, \dots, V_7)$ be a $7$-tuple of operators defined on $\mathcal K$
by
		\begin{equation}\label{v7}
			\begin{aligned}
				&V_i =
				\begin{bmatrix}
					T_i & 0 & 0 & \dots\\
					F^*_{7-i}D_{T_7} & F_i & 0 & \dots\\
					0 & F^*_{7-i} & F_i & \dots\\
					0 & 0 & F^*_{7-i} & \dots\\
					\vdots & \vdots & \vdots & \ddots
				\end{bmatrix}~{\rm{and}}~~
				V_7 =
				\begin{bmatrix}
					T_7 & 0 & 0 & \dots\\
					D_{T_7} & 0 & 0 & \dots\\
					0 & I & 0 & \dots\\
					0 & 0 & I & \dots\\
					\vdots & \vdots & \vdots & \ddots
				\end{bmatrix}.
			\end{aligned}
		\end{equation}
Then we have the following:
\begin{enumerate}

\item $\mathbf{V}$ is a minimal $\Gamma_{E(3; 3; 1, 1, 1)} $-isometric dilation of $\mathbf{T}$.

\item If there exists a $\Gamma_{E(3; 3; 1, 1, 1)} $-isometric dilation $\textbf{W}= (W_1, \dots, W_7)$ of $\textbf{T}$ such that $W_7$ is a minimal isometric dilation of $T_7$, then $\textbf{W}$ is unitarily equivalent to $\textbf{V}$. Furthermore, the above conditions $(i)$ and $(ii)$ are also valid.
\end{enumerate}
	\end{thm}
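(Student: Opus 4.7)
The plan is to verify part (1) by a direct block-matrix computation, using the fundamental equation and Lemma \ref{FiFj}, and then apply Theorem \ref{thm-12} to conclude. For part (2), I would use the uniqueness of the minimal isometric dilation of $T_7$ together with the characterization $V_i = V_{7-i}^* V_7$ from Theorem \ref{thm-12} to pin down the remaining operators, and then read the constraints $(i)$ and $(ii)$ off the commutativity of the $V_i$'s.

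For part (1), first note that $V_7$ is exactly Schaffer's minimal isometric dilation of $T_7$, so $V_7$ is an isometry and the minimality assertion for $\mathbf{V}$ reduces to that of this classical construction. A block-matrix computation, invoking the fundamental equation $T_i - T_{7-i}^*T_7 = D_{T_7}F_iD_{T_7}$, gives $V_i = V_{7-i}^*V_7$ for $1 \le i \le 6$. Using the identity $D_{T_7}T_i = F_iD_{T_7} + F_{7-i}^*D_{T_7}T_7$ from Lemma \ref{FiFj}, one verifies column-by-column that $V_iV_7 = V_7V_i$. For $V_iV_j = V_jV_i$, the $(1,1)$ block reduces to $T_iT_j = T_jT_i$; the diagonal blocks below the top collapse to commutation of the $F$-matrices, controlled by condition $(i)$; and the critical $(2,1)$ block becomes, after substituting Lemma \ref{FiFj},
\begin{equation*}
[F_{7-i}^*, F_j]D_{T_7} + [F_{7-i}^*, F_{7-j}^*]D_{T_7}T_7 + [F_i, F_{7-j}^*]D_{T_7} = 0,
\end{equation*}
which vanishes precisely by conditions $(i)$ and $(ii)$. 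The dilation property $V_i^*|_\mathcal{H} = T_i^*$ is immediate from the first column of $V_i^*$. Contractivity of each $V_i$ and the fact that $(V_i, V_{7-i}, V_7)$ is a tetrablock isometry follow from Bhattacharyya's tetrablock isometric dilation theorem, since Theorem \ref{fundam} shows $(T_i, T_{7-i}, T_7)$ is a tetrablock contraction. Combining these facts with Theorem \ref{thm-12} identifies $\mathbf{V}$ as a $\Gamma_{E(3;3;1,1,1)}$-isometric dilation of $\mathbf{T}$, and minimality is inherited from that of $V_7$.

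For part (2), the uniqueness of the minimal isometric dilation of $T_7$ provides a unitary $U$ from the dilation space of $W_7$ onto $\mathcal{K}$ that is the identity on $\mathcal{H}$ and conjugates $W_7$ to $V_7$. By Theorem \ref{thm-12}, $W_i = W_{7-i}^*W_7$. Writing $\tilde{W}_i := UW_iU^*$, the three ingredients $(a)$ $\tilde{W}_i^*|_\mathcal{H} = T_i^*$, $(b)$ $\tilde{W}_iV_7 = V_7\tilde{W}_i$ (forcing a block Toeplitz-type structure in the $l^2(\mathcal{D}_{T_7})$-tail), and $(c)$ $\tilde{W}_i = \tilde{W}_{7-i}^*V_7$ together determine $\tilde{W}_i$ uniquely column-by-column and force $\tilde{W}_i$ to coincide with the block matrix $V_i$ in \eqref{v7}. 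Thus $\mathbf{W}$ is unitarily equivalent to $\mathbf{V}$. Finally, the commutativity $\tilde{W}_i\tilde{W}_j = \tilde{W}_j\tilde{W}_i$ of the isometry $\mathbf{V}$, read off at the $(2,1)$ and lower diagonal blocks, reproduces exactly the identities $[F_i,F_j]=0$ and $[F_{7-i}^*,F_j]=[F_{7-j}^*,F_i]$, so conditions $(i)$ and $(ii)$ must hold.

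The main obstacle is the $(2,1)$-block computation behind $V_iV_j = V_jV_i$: one has to apply Lemma \ref{FiFj} in two different ways (to $D_{T_7}T_j$ and $D_{T_7}T_i$) and then regroup the resulting commutator terms so that conditions $(i)$ and $(ii)$ emerge as the exact cancellation required. Once this computation is in hand, the remainder of part (1) is bookkeeping, and part (2) follows from the uniqueness of the Schaffer dilation together with the same computation executed in reverse to extract the necessity of $(i)$ and $(ii)$.
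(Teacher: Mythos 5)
Your proposal is correct and follows essentially the same route as the paper: the same block-matrix verifications (the identity $V_i=V_{7-i}^*V_7$ from the fundamental equation, commutation with $V_7$ via Lemma \ref{FiFj}, and the identical $(2,1)$-block cancellation $([F_{7-i}^*,F_j]+[F_i,F_{7-j}^*])D_{T_7}+[F_{7-i}^*,F_{7-j}^*]D_{T_7}T_7=0$ forced by conditions $(i)$ and $(ii)$), followed in part (2) by reduction to the standard minimal isometric dilation of $T_7$ and a column-by-column identification of the entries with the fundamental operators via their uniqueness. The one localized divergence is that you delegate contractivity of the $V_i$ to Bhattacharyya's conditional tetrablock dilation theorem (legitimate, since $(i)$ and $(ii)$ with $j=7-i$ are exactly its hypotheses), whereas the paper instead bounds the spectral radius directly through $\sigma(V_i)\subseteq\sigma(T_i)\cup\sigma(D_i)$ and the numerical-radius estimate $\omega(F_i+zF_{7-i}^*)\le 1$, deduced from $\omega(F_i+zF_{7-i})\le 1$ by Ando's lemma.
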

	
	\begin{proof}
It is easy to see that $V_7$ on $\mathcal{K}$ is the minimal isometric dilation of $T_7$ . The above construction is due to Sch\"{a}ffer. The Sch\"{a}ffer construction was for the unitary dilation, but we only present the isometry part in \eqref{v7}. It is clear from \eqref{v7} that $V_i^{*}|_{\mathcal H}=T_i^{*}$ for all $1\leq i \leq 7.$ To show that $\mathbf{V}$ a $\Gamma_{E(3; 3; 1, 1, 1)}$-isometry, we need to verify the following operator identities:		\begin{enumerate}
			\item $V_iV_7=V_7V_i, 1 \leq i \leq 6;$
			\item $V_iV_j = V_jV_i$ for $1 \leq i, j \leq 6$;
			
			\item $V_i = V^*_{7-i}V_7, 1 \leq i \leq 6$;
			
			\item $r(V_i) \leqslant 1, 1\leq i \leq 6.$ 
		\end{enumerate}
\noindent{\bf{Step-1:}} To demonstrate that $V_i$ commutes with $V_7$,  $1 \leq i \leq 6$, it is necessary to first compute $V_iV_7$ and $V_7V_i$. Observe that		\begin{equation*}
			\begin{aligned}
				&V_iV_7 =
				\begin{bmatrix}
					T_iT_7 & 0 & 0 & 0 & \dots\\
					F^*_{7-i}D_{T_7}T_7 + F_iD_{T_7} & 0 & 0 & 0 & \dots\\
					F^*_{7-i}D_{T_7} & F_i & 0 & 0 & \dots\\
					0 & F^*_{7-i} & F_i & 0 & \dots\\
					0 & 0 & F^*_{7-i} & F_i & \dots\\
					\vdots & \vdots & \vdots & \vdots & \ddots
				\end{bmatrix}, ~
				V_7V_i =
				\begin{bmatrix}
					T_7T_i & 0 & 0 & 0 & \dots\\
					D_{T_7}T_i & 0 & 0 & 0 & \dots\\
					F^*_{7-i}D_{T_7} & F_i & 0 & 0 & \dots\\
					0 & F^*_{7-i} & F_i & 0 & \dots\\
					0 & 0 & F^*_{7-i} & F_i & \dots\\
					\vdots & \vdots & \vdots & \vdots & \ddots
				\end{bmatrix}.
			\end{aligned}
		\end{equation*}
It follows from Lemma \ref{FiFj} that  $F^*_{7-i}D_{T_7}T_7 + F_iD_{T_7}=D_{T_7}T_i$ which implies that $V_iV_7=V_7V_i$ for $1\leq i \leq 6.$

\noindent{\bf{Step-2:}} We now show that  $V_iV_j = V_jV_i$ for $1 \leq i, j \leq 6$. Note that \begin{equation*}
			\begin{aligned}
				V_iV_j &=
				\begin{bmatrix}
					T_iT_j & 0 & 0 & 0 & \dots\\
					F^*_{7-i}D_{T_7}T_j + F_iF^*_{7-j}D_{T_7} & F_iF_j & 0 & 0 & \dots\\
					F^*_{7-i}F^*_{7-j}D_{T_7} & F^*_{7-i}F_j + F_iF^*_{7-j} & F_iF_j & 0 & \dots\\
					0 & F^*_{7-i}F^*_{7-j} & F^*_{7-i}F_j + F_iF^*_{7-j} & F_iF_j & \dots\\
					\vdots & \vdots & \vdots & \vdots & \ddots
				\end{bmatrix},
			\end{aligned}
		\end{equation*}
		and
		\begin{equation*}
			\begin{aligned}
				V_jV_i &=
				\begin{bmatrix}
					T_jT_i & 0 & 0 & 0 & \dots\\
					F^*_{7-j}D_{T_7}T_i + F_jF^*_{7-i}D_{T_7} & F_jF_i & 0 & 0 & \dots\\
					F^*_{7-j}F^*_{7-i}D_{T_7} & F^*_{7-j}F_i + F_jF^*_{7-i} & F_jF_i & 0 & \dots\\
					0 & F^*_{7-j}F^*_{7-i} & F^*_{7-j}F_i + F_jF^*_{7-i} & F_jF_i & \dots\\
					\vdots & \vdots & \vdots & \vdots & \ddots
				\end{bmatrix}.
			\end{aligned}
		\end{equation*}
To verify that $V_iV_j = V_jV_i$, it is necessary to demonstrate the following operator identities:
		\begin{equation}\label{f11}
		\begin{aligned}
		 F^*_{7-i}D_{T_7}T_j + F_iF^*_{7-j}D_{T_7} = F^*_{7-j}D_{T_7}T_i + F_jF^*_{7-i}D_{T_7},\; 1 \leq i, j \leq 6.
		\end{aligned}
		\end{equation}
It follows from \eqref{fun} and \eqref{f11} that
		\begin{equation}\label{fun12}
			\begin{aligned}
	 F^*_{7-i}D_{T_7}T_j- F^*_{7-j}D_{T_7}T_i	&=F^*_{7-i}(F_jD_{T_7} + F^*_{7-j}D_{T_7}T_7) - F^*_{7-j}(F_iD_{T_7} + F^*_{7-i}D_{T_7}T_7),\; 1 \leq i,j\leq 6.  \end{aligned}
		\end{equation}
As $[F_{7-i},F_{7-j}]=0$ and $[F^*_{7-i}, F_j] = [F^*_{7-j}, F_i], 1\leq i, j \leq  6$, from \eqref{fun12}, it yields that 
\begin{equation}\label{fun123}
\begin{aligned}
	 F^*_{7-i}D_{T_7}T_j- F^*_{7-j}D_{T_7}T_i	&=(F^*_{7-i}F_j- F^*_{7-j}F_i)D_{T_7}  \\&=(F_jF^*_{7-i} - F_iF^*_{7-j})D_{T_7}.
			\end{aligned}
		\end{equation}		
Thus, we conclude that $V_iV_j = V_jV_i$ for $1 \leq i, j \leq 6$.

\noindent{\bf{Step-3:}}  We demonstrate that $V_i = V^*_{7-i}V_7$ for $1\leq i \leq 6$. We note that
		\begin{equation}\label{viv7i}
			\begin{aligned}
				V^*_{7-i}V_7 &=
				\begin{bmatrix}
					T^*_{7-i}T_7 + D_{T_7}F_iD_{T_7} & 0 & 0 & 0 & \dots\\
					F^*_{7-i}D_{T_7} & F_i & 0 & 0 & \dots\\
					0 & F^*_{7-i} & F_i & 0 & \dots\\
					0 & 0 & F^*_{7-i} & F_i & \dots\\
					\vdots & \vdots & \vdots & \vdots & \ddots
				\end{bmatrix}.
				\end{aligned}
		\end{equation}
By Theorem \ref{fundam} and \eqref{viv7i}, we deduce  that 
\begin{equation}\label{viv7i1}
			\begin{aligned}
				V^*_{7-i}V_7 &=\begin{bmatrix}
					T_i & 0 & 0 & 0 & \dots\\
					F^*_{7-i}D_{T_7} & F_i & 0 & 0 & \dots\\
					0 & F^*_{7-i} & F_i & 0 & \dots\\
					0 & 0 & F^*_{7-i} & F_i & \dots\\
					\vdots & \vdots & \vdots & \vdots & \ddots
				\end{bmatrix} \\&= V_i.
			\end{aligned}
		\end{equation}

\noindent{\bf{Step-4:}} To complete the proof, it is necessary to demonstrate that the spectral radius of $V_i$, $1 \leq i \leq 6$, is less than or equal to $1$. From the definition of $V_i$, it  is expressed in following form 
		\begin{equation*}
			\begin{aligned}
				V_i &=
				\begin{bmatrix}
					T_i & 0\\
					C_i & D_i
				\end{bmatrix},
			\end{aligned}
		\end{equation*}
		where
		\begin{equation*}
			\begin{aligned}
				&C_i =
				\begin{bmatrix}
					F^*_{7-i}D_{T_7}\\
					0\\
					0\\
					\vdots
				\end{bmatrix}
				~\text{and}~
				D_i =
				\begin{bmatrix}
					F_i & 0 & 0 & \dots\\
					F^*_{7-i} & F_i & 0 & \dots\\
					0 & F^*_{7-i} & F_i & \dots\\
					\vdots & \vdots & \vdots & \ddots
				\end{bmatrix}, \;\;1\leq i \leq 6.
			\end{aligned}
		\end{equation*}
It yields from  [Lemma $1$, \cite{Du}] that  $\sigma(V_i) \subseteq \sigma(T_i) \cup \sigma(D_i), 1\leq i \leq 6$.  We will establish that $r(T_i)$ and $r(D_i)$, $1 \leq i \leq 6,$ do not exceed $1$.  Consequently, we conclude that the numerical radius of \(D_i\), for \(1 \leq i \leq 6\), does not exceed $1$, as the spectral radius is not bigger than the numerical radius.  For $1\leq i \leq 6,$ let $\varphi_i : \mathbb{D} \to \mathcal{B}(\mathcal{D}_{T_7}) $ be the map defined by $$\varphi_i(z) = F_i + zF^*_{7-i}. $$ It is easy to verify that the map $\varphi_i$ is holomorphic on $\mathbb{D}$ and continuous on the boundary $\mathbb{T}$. The operator $D_i$  corresponds to multiplication by the functions $\varphi_i$, $1\leq i \leq 6$, under the Hilbert space isomorphism that maps $\mathcal{D}_{T_7} \oplus \mathcal{D}_{T_7} \oplus \dots$ to $H^2(\mathbb{D}) \otimes \mathcal{D}_{T_7}.$ As $\mathbf{T} = (T_1, \dots, T_7) $ is a $\Gamma_{E(3; 3; 1, 1, 1)} $-contraction, it follows from Theorem \ref{fundam} that $\omega(F_i + zF_{7-i}) \leqslant 1$ for $1\leq i \leq 6$ and for all $z \in \mathbb{T}$  and hence we have $\omega(z_1F_i + z_2F_{7-i}) \leqslant 1$ for $1\leq i \leq 6$ and for all $z_1,z_2 \in \mathbb{T}.$  Let $X_i = F_i + zF^*_{7-i}$ and $Y_i = e^{i\theta}X_i + e^{-i\theta}X^*_i$ for $1\leq i \leq 6$ and $z\in \mathbb{T}$. Clearly, $Y_i$ is a self-adjoint operator.  Observe that 
\begin{equation}\label{num}
		\begin{aligned}		
\omega(e^{i\theta}X_i)&=\sup_{||x|| = 1} |\langle e^{i\theta}X_i x, x \rangle|\\&=\sup_{||x|| = 1} |\langle  x, e^{-i\theta}X_i^*x \rangle|\\&=\omega(e^{-i\theta}X^*_i).
\end{aligned}
		\end{equation}		
Observe that 
\begin{equation}\label{num1}
\begin{aligned}
			\omega(Y_i) &= \sup_{||x|| = 1} |\langle (e^{i\theta}X_i + e^{-i\theta}X_i^*)x, x \rangle| \\
				&\leqslant w(F_i + e^{-2i\theta}\overline{z}F_{7-i}) + w((F_i + e^{-2i\theta}\overline{z}F_{7-i})^*) \leqslant 2.
\end{aligned}
\end{equation}
It yields from \eqref{num},\eqref{num1} and Lemma \ref{numer} that 
\begin{equation*}
\begin{aligned}
e^{i\theta}X_i + e^{-i\theta}X_i^* \leq 2,\;\; 1\leq i \leq 6
\end{aligned}
\end{equation*}
which is equivalent to \begin{equation*}
\begin{aligned}
e^{i\theta}(F_i+e^{-2i\theta}\overline{z}F_{7-i}^*) + e^{-i\theta} (F_i+e^{-2i\theta}\overline{z}F_{7-i}^*)^*\leq 2,
\;\; 1\leq i \leq 6.
\end{aligned}
\end{equation*}
Again by Lemma \ref{numer}, we deduce that $\omega(F_i+zF_{7-i}^*)\leq 1$ for $1\leq i \leq 6$ and for all $z$ in the unit circle. This demonstrates that  
\begin{equation*}\begin{aligned}
\omega (D_i)&=\omega (M_{\varphi_i})\\&\leq 
\sup \{\omega(\varphi_i(z)):z \in \mathbb T \}\\&\leq 1.
\end{aligned}
\end{equation*}
This implies that $r(V_i) \leqslant 1$ for $1\leq i \leq 6$.  Thus, analysing all the aforementioned facts, we conclude that $\textbf{V}$ is a minimal $\Gamma_{E(3; 3; 1, 1, 1)}$-isometric dilation of $\textbf{T}$.
		
\noindent{\bf{Proof of unitary equivalence:}}  Suppose that  $\textbf{T}$ has an another $\Gamma_{E(3; 3; 1, 1, 1)}$-isometric dilation $\textbf{W} = (W_1, \dots, W_7)$ on $\mathcal{K}$ such that $W_7$ is a minimal isometric dilation of $T_7$. Since $W_7$ is an isometry,  $W_7$ has the following form\begin{equation*}
			\begin{aligned}
				W_7 &=
				\begin{bmatrix}
					T_7 & 0\\
					C_7 & E_7
				\end{bmatrix}
			\end{aligned}
		\end{equation*}
with respect to the decomposition $\mathcal{H} \oplus \ell^2(\mathcal{D}_{T_7})$ of $\mathcal{K},$ where
\begin{equation*}
			\begin{aligned}
				C_7 &=
				\begin{bmatrix}
					D_{T_7}\\
					0\\
					0\\
					\vdots
				\end{bmatrix} : \mathcal{H} \to \mathcal{D}_{T_7}\oplus \mathcal{D}_{T_7}\oplus\mathcal{D}_{T_7}\oplus \dots\end{aligned}
		\end{equation*}
	$${\rm{and}}$$
				\begin{equation*}
			\begin{aligned}E_7 =
				\begin{bmatrix}
					0 & 0 & 0 & \dots\\
					I & 0 & 0 & \dots\\
					0 & I & 0 & \dots\\
					\vdots & \vdots & \vdots & \ddots
				\end{bmatrix} :  \mathcal{D}_{T_7}\oplus \mathcal{D}_{T_7}\oplus\mathcal{D}_{T_7}\oplus  \dots \to  \mathcal{D}_{T_7}\oplus \mathcal{D}_{T_7}\oplus\mathcal{D}_{T_7}\oplus \dots.
			\end{aligned}
		\end{equation*}
Since $W_iW_7=W_7W_i$ for $1 \leq i \leq 6,$ it follows by a straightforward calculation that the operators $W_i'$s  have the following operator matrix forms: 
\begin{equation}\label{maa}
\begin{aligned}
W_i & =
\begin{bmatrix}
T_i & 0\\
C_i & E_i
\end{bmatrix},\;\;\; 1\leq i\leq 6
\end{aligned}
\end{equation}
for some $C_i$ and $E_i$, $1\leq i \leq 6,$ with respect to the decomposition of $\mathcal K$ as $\mathcal H\oplus \ell^{2}(\mathcal {D}_{T_7}).$ 

There is a natural identification between Hardy space $H^2(\mathcal{D}_{T_7})$ of functions that take values in $\mathcal{D}_{T_7}$ on the unit disk and $\ell^2(\mathcal{D}_{T_7})= \mathcal{D}_{T_7}\oplus \mathcal{D}_{T_7}\oplus\mathcal{D}_{T_7}\oplus \dots. $ 
The  multiplication operator $M^{\mathcal{D}_{T_7}}_z$ on $H^2(\mathcal{D}_{T_7})$ is the same as the operator $E_7$ under the above Hilbert space isomorphism. As $W_iW_7=W_7W_i$, it implies that $E_iM^{\mathcal{D}_{T_7}}_z=M^{\mathcal{D}_{T_7}}_zE_i$ for $1\leq i \leq 6$ and hence  $E_i$'s are of the form $M^{\mathcal{D}_{T_7}}_{\varphi_i}$ for some $\varphi_i \in H^{\infty}(\mathcal{D}_{T_7})$.   

As $\textbf{W}$ is a $\Gamma_{E(3; 3; 1, 1, 1)}$-isometry, according to the characterization given in [Theorem $4.4$, \cite{apal2}], it follows that $W_i = W_{7-i}^* W_7$ for $1 \leq i \leq 6$. Consequently, we obtain 
\begin{equation}\label{W1W7}
\begin{aligned}
W_i & =
\begin{bmatrix}
T_i & 0\\
C_i & M^{\mathcal{D}_{T_7}}_{\varphi_i}
\end{bmatrix}\\&=W_{7-i}^* W_7\\&=\begin{bmatrix}
T_{7-i}& 0\\
C_{7-i} & M^{\mathcal{D}_{T_7}}_{\varphi_{7-i}}
\end{bmatrix}^*\begin{bmatrix}
					T_7 & 0\\
					C_7 & M^{\mathcal{D}_{T_7}}_z
				\end{bmatrix}\\&=\begin{bmatrix}
					T_{7-i}^*T_7+C_{7-i}^*C_7 & C_{7-i}^*M^{\mathcal{D}_{T_7}}_z\\
					\Big( M^{\mathcal{D}_{T_7}}_{\varphi_{7-i}}\Big)^*C_7 & \Big( M^{\mathcal{D}_{T_7}}_{\varphi_{7-i}}\Big)^*M^{\mathcal{D}_{T_7}}_z
				\end{bmatrix},\;\;\; 1 \leq i \leq 6.
\end{aligned}
\end{equation}
It yields from \eqref{W1W7} that
\begin{equation}\label{Ti}
\begin{aligned}
T_i - T^*_{7-i}T_7 = C^*_{7-i}C_7, \,\,\,\,
					C_i = (M^{\mathcal{D}_{T_7}}_{\varphi_{7-i}})^*C_7, \,\,\,\,
					M^{\mathcal{D}_{T_7}}_{\varphi_i} = (M^{\mathcal{D}_{T_7}}_{\varphi_{7-i}})^*M^{\mathcal{D}_{T_7}}_z,\;\;\; 1\leq i\leq 6.
				\end{aligned}
		\end{equation}
Let us consider the following functions represented by the power series expansions: $\Phi(z)=\sum_{i=0}^{\infty}\tilde{C}_iz^i$ and $\Psi(z)=	\sum_{i=0}^{\infty}\tilde{D}_iz^i$. The equation $M^{\mathcal{D}_{T_7}}_{\Phi}=(M^{\mathcal{D}_{T_7}}_{\Psi})^*M^{\mathcal{D}_{T_7}}_z $ suggests that 
\begin{equation}\label{phipsi}
\sum_{i=0}^{\infty}\tilde{C}_iz^i=z\sum_{j=0}^{\infty}\tilde{D}^*_j\bar{z}^j=\tilde{D}_0^*z+\tilde{D}^*_1+\sum_{j=2}^{\infty}\tilde{D}^*_{j}\bar{z}^{j-1}~\text{for~all}~z\in\mathbb T.
\end{equation}
 From \eqref{phipsi}, we deduce that $\tilde{C}_1=\tilde{D}_0^*, \tilde{C}_0=\tilde{D}^*_1$ and $\tilde{C}_i=\tilde{D}_i=0$ for $2\leq i <\infty$. This follows from \eqref{Ti} that $\varphi_i(z)=A_i+A_{7-i}^*z$ for some $A_i, A_{7-i} \in \mathcal B(\mathcal{D}_{T_7})$ for $1 \leq i \leq 6.$ Thus,  we have 
		\begin{equation}\label{maa1}
			\begin{aligned}
				M^{\mathcal{D}_{T_7}}_{\varphi_i}&=E_i &=
				\begin{bmatrix}
					A_i & 0 & 0 & \dots\\
					A^*_{7-i} & A_i & 0 & \dots\\
					0 & A^*_{7-i} & A_i & \dots\\
					\vdots & \vdots & \vdots & \ddots
				\end{bmatrix},\;\;\; 1\leq i \leq 6,
			\end{aligned}
		\end{equation} on $\mathcal{D}_{T_7}\oplus \mathcal{D}_{T_7}\oplus\mathcal{D}_{T_7}\oplus \dots.$
It follows from \eqref{Ti} that 
		\begin{equation}\label{maa2}
			\begin{aligned}
				C_i &= (M^{\mathcal{D}_{T_7}}_{\varphi_{7-i}})^*C_7\\&=
				\begin{bmatrix}
					A^*_{7-i}D_{T_7}\\
					0\\
					0\\
					\vdots
				\end{bmatrix},\;\;\; 1\leq i\leq 6.
			\end{aligned}
		\end{equation} It yields from \eqref{maa1} and \eqref{maa2} that  
		\begin{equation*}
			\begin{aligned}
				W_i &=
				\begin{bmatrix}
					T_i & 0 & 0 & 0  &\dots\\
					A^*_{7-i}D_{T_7} & A_i & 0 & 0 & \dots\\
					0 & A^*_{7-i} & A_i & 0 & \dots\\
					0 & 0 & A^*_{7-i} & A_i & \dots\\
					\vdots & \vdots & \vdots & \vdots & \ddots
				\end{bmatrix},\;\;\; 1\leq i \leq 6,
			\end{aligned}
		\end{equation*}
with respect the decomposition  $\mathcal{H} \oplus \ell^2(\mathcal{D}_{T_7})$ of $\mathcal{K}.$ As $W_iW_7=W_7W_i,1\leq i \leq 6,$  we get
		\small{\begin{equation*}
			\begin{aligned}
				\begin{bmatrix}
					T_i & 0 & 0 & \dots\\
					A^*_{7-i}D_{T_7} & A_i & 0 & \dots\\
					0 & A^*_{7-i} & A_i & \dots\\
					0 & 0 & A^*_{7-i} & \dots\\
					\vdots & \vdots & \vdots & \ddots
				\end{bmatrix}
				\begin{bmatrix}
					T_7 & 0 & 0 & \dots\\
					D_{T_7} & 0 & 0 & \dots\\
					0 & I & 0 & \dots\\
					0 & 0 & I & \dots\\
					\vdots & \vdots & \vdots & \ddots
				\end{bmatrix}
				&=
				\begin{bmatrix}
					T_7 & 0 & 0 & \dots\\
					D_{T_7} & 0 & 0 & \dots\\
					0 & I & 0 & \dots\\
					0 & 0 & I & \dots\\
					\vdots & \vdots & \vdots & \ddots
				\end{bmatrix}
				\begin{bmatrix}
					T_i & 0 & 0 & \dots\\
					A^*_{7-i}D_{T_7} & A_i & 0 & \dots\\
					0 & A^*_{7-i} & A_i & \dots\\
					0 & 0 & A^*_{7-i} & \dots\\
					\vdots & \vdots & \vdots & \ddots
				\end{bmatrix}.
			\end{aligned}
		\end{equation*}}
By equating the $(2,1)$ entries on both sides of the aforementioned equation, we obtain 
\begin{equation}\label{maa3}
			\begin{aligned}
				D_{T_7}T_i = A_iD_{T_7} + A^*_{7-i}D_{T_7}T_7,\;\;\;\;1\leq i \leq 6.
			\end{aligned}
			\end{equation}
By Lemma \ref{FiFj}, we deduce that $F_i=A_i,1\leq i \leq 6,$ are the fundamental operators of the $\Gamma_{E(3; 3; 1, 1, 1)}$-contraction $\mathbf{T} = (T_1, \dots, T_7).$  Since $W_i$ 's commute with each other, we have   $M^{\mathcal{D}_{T_7}}_{\varphi_i}M^{\mathcal{D}_{T_7}}_{\varphi_j}=M^{\mathcal{D}_{T_7}}_{\varphi_j}M^{\mathcal{D}_{T_7}}_{\varphi_i}$  which implies that 
\begin{equation}\label{A_i}
(F_i+F_{7-i}^*z)(F_j+F_{7-j}^*z)=(F_j+F_{7-j}^*z)(F_i+F_{7-i}^*z)~\text{for}~1 \leq i,j \leq 6\;\;\rm{and}\;\;z\in \mathbb{T}.
\end{equation}
It follows from \eqref{A_i} that 	$[F_i, F_j] = 0$ and $ [F_i, F^*_{7-j}] = [F_j, F^*_{7-i}]$ for $1\leq i, j \leq  6$. Thus, we conclude that $\textbf{V}$ and $\textbf{W}$ are unitarily equivalent. This completes the proof.
\end{proof}
We will construct a dilation assuming that $\textbf{S} = (S_1, S_2, S_3, \tilde{S}_1, \tilde{S}_2)$ is a $\Gamma_{E(3; 2; 1, 2)}$-contraction, with its fundamental operators $G_1,2G_2,2\tilde{G}_1$ and $\tilde{G}_2$ which satisfy the following conditions:

		\begin{enumerate}
			\item[$(i)$] $[G_1,\tilde{G}_i]=0$ for $1 \leq i \leq 2,$ $[G_2,\tilde{G}_j]=0$ for $1 \leq j \leq 2,$ and $[G_1, G_2] = [\tilde{G}_1, \tilde{G}_2]  = 0$;
			
			\item[$(ii)$] $[G_1, G^*_1] = [\tilde{G}_2, \tilde{G}^*_2], [G_2, G^*_2] = [\tilde{G}_1, \tilde{G}^*_1], [G_1, \tilde{G}^*_1] = [G_2, \tilde{G}^*_2], [\tilde{G}_1, G^*_1] = [\tilde{G}_2, G^*_2],$\\$ [G_1, G^*_2] = [\tilde{G}_1, \tilde{G}^*_2], [G^*_1, G_2] = [\tilde{G}^*_1, \tilde{G}_2]$.
		\end{enumerate}		
The following construction of dilation  is based on the Sch\"{a}ffer.
\begin{thm}[Conditional Dilation of $\Gamma_{E(3; 2; 1, 2)}$-Contraction]\label{condilation1}
		Let $\textbf{S} = (S_1, S_2, S_3, \tilde{S}_1, \tilde{S}_2)$ be a $\Gamma_{E(3; 2; 1, 2)}$-contraction define on a Hilbert space $\mathcal H$ with the fundamental operators $G_1,2G_2,2\tilde{G}_1$ and $\tilde{G}_2$ which satisfy the following conditions:

		\begin{enumerate}
			\item[$(i)$] $[G_1,\tilde{G}_i]=0$ for $1 \leq i \leq 2,$ $[G_2,\tilde{G}_j]=0$ for $1 \leq j \leq 2,$ and $[G_1, G_2] = [\tilde{G}_1, \tilde{G}_2]  = 0$;
			
			\item[$(ii)$] $[G_1, G^*_1] = [\tilde{G}_2, \tilde{G}^*_2], [G_2, G^*_2] = [\tilde{G}_1, \tilde{G}^*_1], [G_1, \tilde{G}^*_1] = [G_2, \tilde{G}^*_2], [\tilde{G}_1, G^*_1] = [\tilde{G}_2, G^*_2],$\\$ [G_1, G^*_2] = [\tilde{G}_1, \tilde{G}^*_2], [G^*_1, G_2] = [\tilde{G}^*_1, \tilde{G}_2]$.
		\end{enumerate}		
Let \begin{equation*}
			\begin{aligned}
				\mathcal{\tilde{K}} &= \mathcal{H} \oplus \mathcal{D}_{S_3} \oplus \mathcal{D}_{S_3} \oplus \dots = \mathcal{H} \oplus l^2(\mathcal{D}_{S_3}).
			\end{aligned}
		\end{equation*}
Suppose that $\textbf{W}=(W_1,W_2,W_3,\tilde{W}_1, \tilde{W}_2)$ is a $5$-tuple of bounded operators is defined  on $\tilde{K}$ by
		\begin{equation}\label{w3}
			\begin{aligned}
				&W_1 =
				\begin{bmatrix}
					S_1 & 0 & 0 & \dots\\
					\tilde{G}^*_2D_{S_3} & G_1 & 0 & \dots\\
					0 & \tilde{G}^*_2 & G_1 & \dots\\
					0 & 0 & \tilde{G}^*_2 & \dots\\
					\vdots & \vdots & \vdots & \ddots
				\end{bmatrix}, \,\,
				W_2 =
				\begin{bmatrix}
					S_2 & 0 & 0 & \dots\\
					2\tilde{G}^*_1D_{S_3} & 2G_2 & 0 & \dots\\
					0 & 2\tilde{G}^*_1 & 2G_2 & \dots\\
					0 & 0 & 2\tilde{G}^*_1 & \dots\\
					\vdots & \vdots & \vdots & \ddots
				\end{bmatrix},
				W_3 =
				\begin{bmatrix}
					S_3 & 0 & 0 & \dots\\
					D_{S_3} & 0 & 0 & \dots\\
					0 & I & 0 & \dots\\
					0 & 0 & I & \dots\\
					\vdots & \vdots & \vdots & \ddots
				\end{bmatrix},\\
				&\hspace{2cm}
				\tilde{W}_1 =
				\begin{bmatrix}
					\tilde{S}_1 & 0 & 0 & \dots\\
					2G^*_2D_{S_3} & 2\tilde{G}_1 & 0 & \dots\\
					0 & 2G^*_2 &2 \tilde{G}_1 & \dots\\
					0 & 0 & 2G^*_2 & \dots\\
					\vdots & \vdots & \vdots & \ddots
				\end{bmatrix} \,\,{\rm{and}}~
				\tilde{W}_2 =
				\begin{bmatrix}
					\tilde{S}_2 & 0 & 0 & \dots\\
					G^*_1D_{S_3} & \tilde{G}_2 & 0 & \dots\\
					0 & G^*_1 & \tilde{G}_2 & \dots\\
					0 & 0 & G^*_1 & \dots\\
					\vdots & \vdots & \vdots & \ddots
				\end{bmatrix}.
			\end{aligned}
		\end{equation}
Then we have the following: 
\begin{enumerate}
\item $\textbf{W}$ is a minimal $\Gamma_{E(3; 2; 1, 2)}$-isometric dilation of $\textbf{S}$.
		
\item If there exists a $\Gamma_{E(3; 2; 1, 2)}$-isometric dilation $\textbf{X} = (X_1, X_2, X_3, \tilde{X}_1, \tilde{X}_2)$ of $\textbf{S}$ such that $X_3$ is a minimal isometric dilation of $S_3$, then $\textbf{X}$ is unitarily equivalent to $\textbf{W}$. Moreover, the above identities  $(i)$ and $(ii)$ are also valid.
\end{enumerate}
	\end{thm}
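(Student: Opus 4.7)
The plan is to imitate the architecture of the proof of Theorem~\ref{conddilation}, making the necessary adjustments to handle the five-tuple $\mathbf W=(W_1,W_2,W_3,\tilde W_1,\tilde W_2)$ and the richer system of commutativity hypotheses on $G_1,G_2,\tilde G_1,\tilde G_2$. The operator $W_3$ is, by construction, the Sch\"affer minimal isometric dilation of $S_3$, and reading off the first column of each block matrix in \eqref{w3} immediately gives $W_i^*|_{\mathcal H}=S_i^*$ for $i=1,2,3$ and $\tilde W_j^*|_{\mathcal H}=\tilde S_j^*$ for $j=1,2$. To conclude that $\mathbf W$ is a minimal $\Gamma_{E(3;2;1,2)}$-isometric dilation, I will invoke Theorem~\ref{thm-13} by verifying: (a) $W_3$ is an isometry; (b) all operators $W_i$, $\tilde W_j$ commute with $W_3$ and with each other; (c) $W_1=\tilde W_2^*W_3$ and $W_2=\tilde W_1^*W_3$; and (d) the spectral radii of $W_1$, $W_2/2$, $\tilde W_1/2$, $\tilde W_2$ are at most $1$.

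For step (a)-(c), I would proceed in four substeps exactly as in the proof of Theorem~\ref{conddilation}. The identity $W_1W_3=W_3W_1$ reduces, after multiplying out the block matrices, to the operator identity $\tilde G_2^*D_{S_3}S_3+G_1D_{S_3}=D_{S_3}S_1$, which is precisely the first half of the fundamental equation \eqref{s3} of Lemma~\ref{s1s3}; analogous computations dispose of $W_2W_3=W_3W_2$, $\tilde W_jW_3=W_3\tilde W_j$. The pairwise commutativity $W_iW_j=W_jW_i$, $\tilde W_i\tilde W_j=\tilde W_j\tilde W_i$, and mixed commutativity $W_i\tilde W_j=\tilde W_jW_i$ each produce, on the $(1,1)$ block, commutativity of the $S$'s and $\tilde S$'s (which holds by hypothesis); on the $(2,1)$ block, a cross-term identity whose verification is the key calculation. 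These cross-term identities all take the generic form
\begin{equation*}
A^*D_{S_3}B+CD^*D_{S_3}= A'^*D_{S_3}B'+C'D'^*D_{S_3},
\end{equation*}
with $A,A',B,B',C,C',D,D'$ drawn from $\{S_i,\tilde S_j,G_k,\tilde G_\ell\}$; each is reduced via \eqref{s3} to a purely $\mathcal B(\mathcal D_{S_3})$-valued identity in the fundamental operators, and these identities are precisely condition $(ii)$ together with the commutativity $(i)$. The $(k,k-1)$ entries produce Toeplitz-like consistency identities of the same flavour. The identities $W_1=\tilde W_2^*W_3$ and $W_2=\tilde W_1^*W_3$ follow from multiplying the block matrices and substituting $D_{S_3}G_1D_{S_3}=S_1-\tilde S_2^*S_3$ and $D_{S_3}G_2D_{S_3}=\tfrac12(S_2-\tilde S_1^*S_3)$ given in \eqref{funda1}-\eqref{funda11}.

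For (d), I write $W_1=\begin{bmatrix}S_1&0\\ C&D\end{bmatrix}$ with $D$ a lower-triangular Toeplitz operator whose symbol, under the identification $\ell^2(\mathcal D_{S_3})\cong H^2(\mathbb D)\otimes\mathcal D_{S_3}$, is the analytic operator function $\varphi(z)=G_1+\tilde G_2^*z$. By [Lemma~1,~\cite{Du}] $\sigma(W_1)\subseteq\sigma(S_1)\cup\sigma(M_\varphi)$, and $r(M_\varphi)\leq\omega(M_\varphi)\leq\sup_{z\in\mathbb T}\omega(G_1+\tilde G_2^*z)$. Since Theorem~\ref{s1s2} guarantees $\omega(G_1+z\tilde G_2)\le 1$ for all $z\in\mathbb T$, the Ando criterion (Lemma~\ref{numer}) applied exactly as in Step~4 of the proof of Theorem~\ref{conddilation} converts this into $\omega(G_1+z\tilde G_2^*)\le 1$, so $r(W_1)\le 1$. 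The same argument handles $\tilde W_2$, and the rescaled versions apply to $W_2/2$ and $\tilde W_1/2$ using the second bullet of Theorem~\ref{s1s2}(5). Together these verify condition (5) of Theorem~\ref{thm-13}, proving $\mathbf W$ is a $\Gamma_{E(3;2;1,2)}$-isometry. Minimality is immediate from minimality of $W_3$.

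For part (2), suppose $\mathbf X=(X_1,X_2,X_3,\tilde X_1,\tilde X_2)$ is any $\Gamma_{E(3;2;1,2)}$-isometric dilation of $\mathbf S$ with $X_3$ the minimal isometric dilation of $S_3$. By Sch\"affer's uniqueness, $X_3$ is unitarily equivalent to $W_3$, and in the corresponding block decomposition $\mathcal H\oplus\ell^2(\mathcal D_{S_3})$ each $X_i,\tilde X_j$ must have a lower-triangular block form. Commutativity with $X_3$ forces the lower-right blocks to commute with $M_z^{\mathcal D_{S_3}}$, hence be analytic multiplication operators $M_{\varphi_i}^{\mathcal D_{S_3}},M_{\tilde\varphi_j}^{\mathcal D_{S_3}}$ for $\varphi_i,\tilde\varphi_j\in H^\infty(\mathcal B(\mathcal D_{S_3}))$. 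Then using $X_1=\tilde X_2^*X_3$ and $X_2=\tilde X_1^*X_3$ I obtain, by comparing coefficients as in \eqref{phipsi}, that each symbol is affine: $\varphi_1(z)=A_1+\tilde A_2^*z$, $\tilde\varphi_2(z)=\tilde A_2+A_1^*z$, and similarly for the second pair. Computing the $(2,1)$ block of $X_1X_3=X_3X_1$ yields $D_{S_3}S_1=A_1D_{S_3}+\tilde A_2^*D_{S_3}S_3$, and the analogous identity for $\tilde X_2$. By the uniqueness statement in Lemma~\ref{s1s3}, this forces $A_1=G_1$, $\tilde A_2=\tilde G_2$, and similarly $A_2=G_2$, $\tilde A_1=\tilde G_1$; thus $\mathbf X\cong\mathbf W$. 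Finally, the commutativity relations $X_iX_j=X_jX_i$ and $X_i\tilde X_j=\tilde X_jX_i$ on the Toeplitz blocks translate, as in \eqref{A_i}, into the polynomial identities in $z\in\mathbb T$ that yield exactly the commutativity relations $(i)$ and the cross-commutator identities $(ii)$ on the fundamental operators. The main obstacle will be systematically bookkeeping the six distinct cross-commutator identities in $(ii)$ and matching each one to the correct coefficient of a product of two block Toeplitz symbols; this is essentially a lengthy but mechanical substitution once the framework above is in place.
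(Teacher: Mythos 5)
Your proposal is correct and follows essentially the same route as the paper's proof: the same verification of the conditions of Theorem~\ref{thm-13} via block-matrix computations reduced to the fundamental equations of Lemma~\ref{s1s3} and hypotheses $(i)$--$(ii)$, the same spectral-radius argument through [Lemma~1, \cite{Du}] and Ando's numerical-radius criterion, and the same uniqueness argument via Sch\"affer's construction, analyticity of the Toeplitz blocks, affineness of the symbols, and uniqueness of the fundamental operators.
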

	
	\begin{proof}
It is easy to see that $W_3$  is the minimal isometric dilation of $S_3$. The above construction is due to Sch\"{a}ffer. The Sch\"{a}ffer construction was for the unitary dilation, but we only present the isometry part in \eqref{w3}. It is evident from \eqref{w3} that $W_i^{*}|_{\mathcal H}=S_i^{*}$ for $1\leq i \leq 3$ and $\tilde{W}_j^{*}|_{\mathcal H}=\tilde{S}_j^{*}$ for $1\leq j \leq 2.$ To demonstrate  the $\mathbf{W}$ a $\Gamma_{E(3; 2; 1, 2)}$-isometry, we need to verify the following operator identities:		
		\begin{enumerate}
			\item $W_1, W_2, W_3$ and $\tilde{W}_1, \tilde{W}_2$ commute;
			
			\item $W_1 = \tilde{W}^*_2W_3, W_2 = \tilde{W}^*_1W_3$;
			
			\item $r(W_1)\leq  1, r(\frac{W_2}{2})\leq 1,  r(\frac{\tilde{W}_1}{2})\leq 1 $ and $ r(\tilde{W}_2)\leq 1$.
		\end{enumerate}
\noindent{\bf{Step-$1:$}} To verify the commutativity of $W_1, W_2, W_3$ and $\tilde{W}_1, \tilde{W}_2$, it is necessary to demonstrate the following identities:
		\begin{enumerate}
			\item[$(a)$] $W_iW_3=W_3W_i$ for $1 \leq i \leq 2$ and $W_1W_2=W_2W_1;$			
			\item[$(b)$] $W_i\tilde{W}_j=\tilde{W}_jW_i$ for $1\leq i,j\leq 2;$
			
			\item[$(c)$] $W_3 \tilde{W}_i= \tilde{W}_iW_3$ for $1\leq i \leq 2$ and $\tilde{W}_1\tilde{W}_2=\tilde{W}_2\tilde{W}_1.$
		\end{enumerate}
We first show that $W_1W_3 = W_3W_1$. Observe that
		\begin{equation}\label{w1w3}
			\begin{aligned}
				&W_1W_3 =
				\begin{bmatrix}
					S_1S_3 & 0 & 0 & 0 & \dots\\
					\tilde{G}^*_2D_{S_3}S_3 + G_1D_{S_3} & 0 & 0 & 0 & \dots\\
					\tilde{G}^*_2D_{S_3}& G_1 & 0 & 0 & \dots\\
					0 & \tilde{G}^*_2 & G_1 & 0 & \dots\\
					0 & 0 & \tilde{G}^*_2 & G_1 & \dots\\
					\vdots & \vdots & \vdots & \vdots & \ddots
				\end{bmatrix}, ~
				W_3W_1 =
				\begin{bmatrix}
					S_3S_1 & 0 & 0 & 0 & \dots\\
					D_{S_3}S_1 & 0 & 0 & 0 & \dots\\
					\tilde{G}^*_2D_{S_3} & G_1 & 0 & 0 & \dots\\
					0 & \tilde{G}^*_2 & G_1 & 0 & \dots\\
					0 & 0 & \tilde{G}^*_2 & G_1 & \dots\\
					\vdots & \vdots & \vdots & \vdots & \ddots
				\end{bmatrix}.
			\end{aligned}
		\end{equation}
It yields from Lemma \ref{s1s3} that $D_{S_3}S_1 = G_1D_{S_3} + \tilde{G}_2^*D_{S_3}S_3,$  which implies that $W_1W_3=W_3W_1.$ By using the similar argument, it follows that $W_2W_3=W_3W_2$  and $W_3 \tilde{W}_i= \tilde{W}_iW_3$ for $1\leq i \leq 2$. We will now show that $W_1W_2 = W_2W_1$. Note  that
		\begin{equation}
			\begin{aligned}
				W_1W_2 &=
				\begin{bmatrix}
					S_1S_2 & 0 & 0 & 0 & \dots\\
					\tilde{G}^*_2D_{S_3}S_2 + 2G_1\tilde{G}^*_1D_{S_3} & 2G_1G_2 & 0 & 0 & \dots\\
					2\tilde{G}^*_2\tilde{G}^*_1D_{S_3} & 2\tilde{G}^*_2G_2 + 2G_1\tilde{G}^*_1 & 2G_1G_2 & 0 & \dots\\
					0 & 2\tilde{G}^*_2\tilde{G}^*_1 & 2\tilde{G}^*_2G_2 + 2G_1\tilde{G}^*_1 & 2G_1G_2 & \dots\\
					\vdots & \vdots & \vdots & \vdots & \ddots
				\end{bmatrix}
			\end{aligned}
		\end{equation}
		and
		\begin{equation}
			\begin{aligned}
				W_2W_1 &=
				\begin{bmatrix}
					S_2S_1 & 0 & 0 & 0 & \dots\\
					2\tilde{G}^*_1D_{S_3}S_1 + 2G_2\tilde{G}^*_2D_{S_3} & 2G_2G_1 & 0 & 0 & \dots\\
					2\tilde{G}^*_1\tilde{G}^*_2D_{S_3} & 2\tilde{G}^*_1G_1 + 2G_2\tilde{G}^*_2 & 2G_2G_1 & 0 & \dots\\
					0 & 2\tilde{G}^*_1\tilde{G}^*_2 & 2\tilde{G}^*_1G_1 + 2G_2\tilde{G}^*_2 & 2G_2G_1 & \dots\\
					\vdots & \vdots & \vdots & \vdots & \ddots
				\end{bmatrix}.
			\end{aligned}
		\end{equation}
In order to verify $W_1W_2=W_2W_1$, we need to check the following opertor identities: \begin{equation}\label{G11}\tilde{G}^*_2D_{S_3}S_2 + 2G_1\tilde{G}^*_1D_{S_3} = 2\tilde{G}^*_1D_{S_3}S_1 + 2G_2\tilde{G}^*_2D_{S_3}.\end{equation}
It follows from \eqref{G11} and \eqref{s3} that 
\begin{equation}
			\begin{aligned}
				\tilde{G}^*_2D_{S_3}S_2 - 2\tilde{G}^*_1D_{S_3}S_1 &=\tilde{G}^*_2(2G_2D_{S_3}+2\tilde{G}^*_1D_{S_3}S_3)- 2\tilde{G}^*_1(G_1D_{S_3}+\tilde{G}^*_2D_{S_3}S_3)\\&=
				2(\tilde{G}^*_2G_2 - \tilde{G}^*_1G_1)D_{S_3}\\
				&=2(G_2\tilde{G}^*_2 - G_1\tilde{G}^*_1)D_{S_3}.
			\end{aligned}
		\end{equation}
Thus, we have $W_1W_2=W_2W_1.$ We will now show that $W_1\tilde{W}_2 = \tilde{W}_2W_1.$  Notice that
		\begin{equation}\label{w11}
			\begin{aligned}
				W_1\tilde{W}_2 &=
				\begin{bmatrix}
					S_1\tilde{S}_2 & 0 & 0 & 0 & \dots\\
					\tilde{G}^*_2D_{S_3}\tilde{S}_2 + G_1G^*_1D_{S_3} & G_1\tilde{G}_2 & 0 & 0 & \dots\\
					\tilde{G}^*_2G^*_1D_{S_3} & \tilde{G}^*_2\tilde{G}_2 + G_1G^*_1 & G_1\tilde{G}_2 & 0 & \dots\\
					0 & \tilde{G}^*_2G^*_1 & \tilde{G}^*_2\tilde{G}_2 + G_1G^*_1 & G_1\tilde{G}_2 & \dots\\
					\vdots & \vdots & \vdots & \vdots & \ddots
				\end{bmatrix}
			\end{aligned}
		\end{equation}
		and
		\begin{equation}\label{w12}
			\begin{aligned}
				\tilde{W}_2W_1 &=
				\begin{bmatrix}
					\tilde{S}_2S_1 & 0 & 0 & 0 & \dots\\
					G^*_1D_{S_3}S_1 + \tilde{G}_2\tilde{G}^*_2D_{S_3} & \tilde{G}_2G_1 & 0 & 0 & \dots\\
					G^*_1\tilde{G}^*_2D_{S_3} & G^*_1G_1 + \tilde{G}_2\tilde{G}^*_2 & \tilde{G}_2G_1 & 0 & \dots\\
					0 & G^*_1\tilde{G}^*_2 & G^*_1G_1 + \tilde{G}_2\tilde{G}^*_2 & \tilde{G}_2G_1 & \dots\\
					\vdots & \vdots & \vdots & \vdots & \ddots
				\end{bmatrix}.
			\end{aligned}
		\end{equation}
To prove $W_1\tilde{W}_2 = \tilde{W}_2W_1,$ it requires to demonstrate the following operator identities: \begin{equation}\label{w123}
		\tilde{G}^*_2D_{S_3}\tilde{S}_2 + G_1G^*_1D_{S_3} = G^*_1D_{S_3}S_1 + \tilde{G}_2\tilde{G}^*_2D_{S_3}.
		\end{equation}
It implies from \eqref{s3} and \eqref{w123} that 		
		\begin{equation}
			\begin{aligned}
				G^*_1D_{S_3}S_1 - \tilde{G}^*_2D_{S_3}\tilde{S}_2 
				&= G^*_1(G_1D_{S_3}+\tilde{G}^*_2D_{S_3}{S}_3)-\tilde{G}^*_2(\tilde{G}_2 D_{S_3}+G^*_1D_{S_3}S_3)\\&=(G^*_1G_1 - \tilde{G}^*_2\tilde{G}_2)D_{S_3}\\&=(G_1G^*_1 - \tilde{G}_2\tilde{G}^*_2)D_{S_3}.
			\end{aligned}
		\end{equation}
Similarly, we can also show that $W_2\tilde{W}_1=\tilde{W}_1W_2$ and $\tilde{W}_1\tilde{W}_2=\tilde{W}_2\tilde{W}_1.$

\noindent{\bf{Step-$2:$}} We now show that $W_1 = \tilde{W}^*_2W_3$. Observe that		
\begin{equation}\label{tildew}
			\begin{aligned}
				\tilde{W}^*_2W_3 &=
				\begin{bmatrix}
					\tilde{S}^*_2S_3 + D_{S_3}G_1D_{S_3} & 0 & 0 & 0 & \dots\\
					\tilde{G}^*_2D_{S_3} & G_1 & 0 & 0 & \dots\\
					0 & \tilde{G}^*_2 & G_1 & 0 & \dots\\
					0 & 0 & \tilde{G}^*_2 & G_1 & \dots\\
					\vdots & \vdots & \vdots & \vdots & \ddots
				\end{bmatrix}
				\end{aligned}
		\end{equation}
It follows from Theorem \ref{s1s2} and \eqref{tildew} that 
\begin{equation}\label{tildew1}
\begin{aligned}
				\tilde{W}^*_2W_3 &=
				\begin{bmatrix}
					S_1 & 0 & 0 & 0 & \dots\\
					\tilde{G}^*_2D_{S_3} & G_1 & 0 & 0 & \dots\\
					0 & \tilde{G}^*_2 & G_1 & 0 & \dots\\
					0 & 0 & \tilde{G}^*_2 & G_1 & \dots\\
					\vdots & \vdots & \vdots & \vdots & \ddots
				\end{bmatrix}
				\\&= W_1.
			\end{aligned}
		\end{equation}
Similarly, we can also show that $W_2 = \tilde{W}^*_1W_3$.
		
\noindent{\bf{Step-$3:$}} In order to complete the proof, it is essential to show the spectral radii of $W_1, \frac{W_2}{2}, \frac{\tilde{W}_1}{2}$, and $\tilde{W}_2$ do not exceed  $1$.  According to the definition, we can express $W_i,1\leq i \leq 2$ and $\tilde{W}_j,1\leq j \leq 2$ in the following form:
		\begin{equation*}
			\begin{aligned}
				W_i &=
				\begin{bmatrix}
					S_i & 0\\
					C_i & D_i
				\end{bmatrix}, 1\leq i \leq 2,
				\,\, and \,\,
				\tilde{W}_2 =
				\begin{bmatrix}
					\tilde{S}_j & 0\\
					\tilde{C}_j & \tilde{D}_j
				\end{bmatrix}, 1\leq j \leq 2,
			\end{aligned}
		\end{equation*}
		where
		\begin{equation*}
			\begin{aligned}
				&C_1 =
				\begin{bmatrix}
					\tilde{G}^*_2D_{S_3}\\
					0\\
					0\\
					\vdots
				\end{bmatrix},C_2 =
				\begin{bmatrix}
					2\tilde{G}^*_1D_{S_3}\\
					0\\
					0\\
					\vdots
				\end{bmatrix},\tilde{C}_1 =
				\begin{bmatrix}
					2G^*_2D_{S_3}\\
					0\\
					0\\
					\vdots
				\end{bmatrix}~\text{and}~\tilde{C}_2 =
				\begin{bmatrix}
					G^*_1D_{S_3}\\
					0\\
					0\\
					\vdots
				\end{bmatrix}
				~\text{and}~D_1 =
				\begin{bmatrix}
					G_1 & 0 & 0 & \dots\\
					\tilde{G}^*_2 & G_1 & 0 & \dots\\
					0 & \tilde{G}^*_2 & G_1 & \dots\\
					\vdots & \vdots & \vdots & \ddots
				\end{bmatrix},\\&D_2 =
				\begin{bmatrix}
					2G_2 & 0 & 0 & \dots\\
					2\tilde{G}^*_1& 2G_2 & 0 & \dots\\
					0 &2 \tilde{G}^*_1& 2G_2 & \dots\\
					\vdots & \vdots & \vdots & \ddots
				\end{bmatrix},\tilde{D}_1 =
				\begin{bmatrix}
					2\tilde{G}_1 & 0 & 0 & \dots\\
					2G^*_2 & 2\tilde{G}_1 & 0 & \dots\\
					0 & 2G^*_2& 2\tilde{G}_1 & \dots\\
					\vdots & \vdots & \vdots & \ddots
				\end{bmatrix} ~\text{and}~
				\tilde{D}_2 =
				\begin{bmatrix}
					\tilde{G}_2 & 0 & 0 & \dots\\
					G^*_1 & \tilde{G}_2 & 0 & \dots\\
					0 & G^*_1 & \tilde{G}_2 & \dots\\
					\vdots & \vdots & \vdots & \ddots
				\end{bmatrix}.
			\end{aligned}
		\end{equation*}
It follows from [Lemma $1$,  \cite{Du}] that  $\sigma(W_i) \subseteq \sigma(S_i) \cup \sigma(D_i),1\leq i \leq 2,$ and $\sigma(\tilde{W}_j)\subseteq \sigma(\tilde{S}_j) \cup \sigma(\tilde{D}_j)$ for $1\leq j \leq 2.$  We will arrive at the conclusion if we determine that $r(S_1),r(\frac{S_2}{2}),r(\frac{\tilde{S}_1}{2}),r(\tilde{S}_2),r(D_1),r(\frac{D_2}{2}),r(\frac{\tilde{D}_1}{2})$ and $r(\tilde{D}_2)$  do not exceed $1.$  As a result, we will demonstrate that the numerical radii of $\omega(D_1),\omega(\frac{D_2}{2}),\omega(\frac{\tilde{D}_1}{2})$ and $\omega(\tilde{D}_2)$ do not exceed $1$; because the spectral radius is not bigger than the numerical radius, this will complete our proof. Let $\tilde{\varphi}_i : \mathbb{D} \to \mathcal{B}(\mathcal{D}_{S_3}) $ for $1\leq i \leq 2 $ and  $\psi_j : \mathbb{D} \to \mathcal{B}(\mathcal{D}_{S_3}) $ for $1\leq j\leq 2, $ be the maps defined by $$\tilde{\varphi}_1(z) = G_1+ z\tilde{G}^*_2,~~\tilde{\varphi}_2(z)=\tilde{G}_2+zG_1^*,~~\psi_1(z)=G_2+z\tilde{G}^*_1,~~\psi_2(z)=\tilde{G}_1+zG_2^* .$$ Clearly, the maps $\tilde{\varphi}_i$ for $1\leq i \leq 2$ and $\psi_j$ for $1\leq j\leq 2$ are holomorphic on $\mathbb{D}$ and continuous on the boundary $\mathbb{T}$. The operators $D_1, \tilde{D}_2$ and $\frac{D_2}{2},\frac{\tilde{D}_1}{2}$ correspond to multiplication by the functions $\tilde{\varphi}_i,1\leq i \leq 2,$ and $\psi_j,1\leq j\leq 2,$ respectively, under the Hilbert space isomorphism that maps $\mathcal{D}_{S_3} \oplus \mathcal{D}_{S_3} \oplus \dots$ to $H^2(\mathbb{D}) \otimes \mathcal{D}_{S_3}.$ Because $\mathbf{S} = (S_1, S_2,S_3,\tilde{S}_1,\tilde{S}_2) $ is a $\Gamma_{E(3; 2; 1, 2)}$-contraction, it implies from Theorem \ref{s1s2} that 
$\omega(G_1 + z\tilde{G}_2)\leqslant 1$ and $\omega(G_2 + z\tilde{G}_1)\leqslant 1$ for all $z \in \mathbb{T}$ and hence we have $\omega(z_1G_1 + z_2\tilde{G}_2)\leqslant 1$ and $\omega(z_1G_2 + z_2\tilde{G}_1)\leqslant 1$ for all $z_1,z_2 \in \mathbb{T}.$
Let  $\tilde{Y}_1 = G_1 + z\tilde{G}^*_2$ and $\tilde{Y}_2=G_2+z\tilde{G}^*_1$ for all $z\in \mathbb T$ and $Z_1=e^{i\theta_1}\tilde{Y}_1 + e^{-i\theta_1}\tilde{Y}^*_1$ and $Z_2=e^{i\theta_2}\tilde{Y}_2 + e^{-i\theta_2}\tilde{Y}^*_2.$ Note that $Z_i$ for $1\leq i \leq 2$ is self-adjoint. By using an analogous argument as presented in Theorem \ref{conddilation}, we deduce that $\omega(G_1+ z\tilde{G}^*_2)\leq 1,$ $\omega(\tilde{G}_2+zG_1^*)\leq 1,$ $\omega(G_2+z\tilde{G}^*_1)\leq 1$ and $\omega(\tilde{G}_1+zG_2^*)\leq 1$ for all $z\in \mathbb T.$ This indicates that $\omega(D_1)\leq 1,\omega(\frac{D_2}{2})\leq 1,\omega(\frac{\tilde{D}_1}{2})\leq 1$ and $\omega(\tilde{D}_2)\leq 1$ and hence we have $r(D_1)\leq 1,r(\frac{D_2}{2})\leq 1,r(\frac{\tilde{D}_1}{2})\leq 1$ and $r(\tilde{D}_2)\leq 1.$ By using the aforementioned facts, we conclude that $\textbf{W}$ is a minimal $\Gamma_{E(3; 2; 1, 2)}$-isometric dilation of $\textbf{S}$.

\noindent{\bf{Proof of unitary equivalence:}} Suppose that $\textbf{S} = (S_1, S_2, S_3, \tilde{S}_1, \tilde{S}_2)$ possesses another $\Gamma_{E(3; 2; 1, 2)}$-isometric dilation \mbox{$\textbf{X} = (X_1, X_2, X_3, \tilde{X}_1, \tilde{X}_2)$} on $\mathcal{\tilde{K}}$ such that $X_3$ is a minimal isometric dilation of $S_3$. As $X_3$ is an isometry, it can be expressed in the following form \begin{equation}
			\begin{aligned}
				X_3 &=
				\begin{bmatrix}
					S_3 & 0\\
					C_3 & E_3
				\end{bmatrix}
			\end{aligned}
		\end{equation} with respect to the decomposition $\mathcal{\tilde{K}} = \mathcal{H} \oplus l^2(\mathcal{D}_{S_3}),$ where
		\begin{equation}
			\begin{aligned}
				C_3 &=
				\begin{bmatrix}
					D_{S_3}\\
					0\\
					0\\
					\vdots
				\end{bmatrix} : \mathcal{H} \to \ell^2(\mathcal{D}_{S_3}), \,\, \text{and} \,\,
				E_3 =
				\begin{bmatrix}
					0 & 0 & 0 & \dots\\
					I & 0 & 0 & \dots\\
					0 & I & 0 & \dots\\
					\vdots & \vdots & \vdots & \ddots
				\end{bmatrix} : \ell^2(\mathcal{D}_{S_3}) \to \ell^2(\mathcal{D}_{S_3}).
			\end{aligned}
		\end{equation}
Since $X_iX_3=X_3X_i$ for $1\leq i \leq 2$ and $\tilde{X}_jX_3=X_3\tilde{X}_j$ for $1\leq j \leq 2,$ it follows by a straightforward calculation that the operators $X_i, 1\leq i \leq 2,$ and $\tilde{X}_j, 1\leq j\leq 2,$ have the following operator matrix forms: 
		\begin{equation}\label{xi}
			\begin{aligned}
				X_i &=
				\begin{bmatrix}
					S_i & 0\\
					C_i & E_i
				\end{bmatrix} \,\, \,\, \,\, \text{and} \,\,
				\tilde{X}_j =
				\begin{bmatrix}
					\tilde{S}_j & 0\\
					\tilde{C}_j & \tilde{E}_j
				\end{bmatrix} \,\, 
			\end{aligned}
		\end{equation}
for some bounded linear operators $C_i$ and $E_i, 1\leq i \leq 2,$ and $\tilde{C}_j$ and $\tilde{E}_j,1\leq j \leq 2$,  with respect to the decomposition $\mathcal{\tilde{K}} = \mathcal{H} \oplus l^2(\mathcal{D}_{S_3}).$ There is a natural isomorphism between Hardy space $H^2(\mathcal{D}_{S_3}),$ consisting of functions that take values in $\mathcal{D}_{S_3}$ on the unit disk and $\ell^2(\mathcal{D}_{S_3})= \mathcal{D}_{S_3}\oplus \mathcal{D}_{S_3}\oplus\mathcal{D}_{S_3}\oplus \dots. $  The $\mathcal{D}_{S_3}$-valued multiplication operator $M^{\mathcal{D}_{S_3}}_z$ on $H^2(\mathcal{D}_{S_3})$ under the above Hilbert space isomorphism, is the operator $E_3$.  Since $X_iX_3=X_3X_i$  and $\tilde{X}_jX_3=X_3\tilde{X}_j, 1\leq i, j \leq 2,$  it yields that $E_iM^{\mathcal{D}_{S_3}}_z=M^{\mathcal{D}_{S_3}}_zE_i$ and $\tilde{E}_jM^{\mathcal{D}_{S_3}}_z=M^{\mathcal{D}_{S_3}}_z\tilde{E}_j$ for $1\leq i, j \leq 2$ which implies that  $E_i$'s are of the form $M^{\mathcal{D}_{S_3}}_{\tilde{\varphi}_i}$ for some $\tilde{\varphi}_i \in H^{\infty}(\mathcal{D}_{S_3})$ and $\tilde{E}_j$'s are of the form $M^{\mathcal{D}_{S_3}}_{\psi_j}$ for some $\psi_j \in H^{\infty}(\mathcal{D}_{S_3}).$

As $\textbf{X}$ is a $\Gamma_{E(3; 2; 1, 2)}$-isometry, it follows from  the characterization given in [Theorem $4.5$, \cite{apal2}] that $X_1 = \tilde{X}^*_2X_3$ and $X_2 = \tilde{X}^*_1X_3$. By using the similar argument as in Theorem \ref{conddilation}, we deduce the following operator identities:
\begin{equation}\label{s111}
			\begin{aligned}
				\begin{cases}
					(i) &S_1 - \tilde{S}^*_2S_3 = \tilde{C}^*_2C_3, \,\,\,\,
					C_1 = (M^{\mathcal{D}_{S_3}}_{\psi_2})^*C_3, \,\,\,\,
					M^{\mathcal{D}_{S_3}}_{\tilde{\varphi}_1} = (M^{\mathcal{D}_{S_3}}_{\psi_2})^*M^{\mathcal{D}_{S_3}}_z,\\
					(ii) &\tilde{S}_2 - S^*_1S_3 = C^*_1C_3, \,\,\,\,
					\tilde{C}_2 = (M^{\mathcal{D}_{S_3}}_{\tilde{\varphi}_1})^*C_3, \,\,
					M^{\mathcal{D}_{S_3}}_{{\psi}_2} = (M^{\mathcal{D}_{S_3}}_{\tilde{\varphi}_1})^*M^{\mathcal{D}_{S_3}}_z
				\end{cases}
			\end{aligned}
		\end{equation}
		$${\rm{and}}$$
		\begin{equation}\label{s1112}
			\begin{aligned}
				\begin{cases}
				(i) &S_2 - \tilde{S}^*_1S_3 = \tilde{C}^*_1C_3, \,\,\,\,
					C_2 = (M^{\mathcal{D}_{S_3}}_{{\psi}_1})^*C_3, \,\,\,\,
					M^{\mathcal{D}_{S_3}}_{\tilde{\varphi}_2} = (M^{\mathcal{D}_{S_3}}_{{\psi}_1})^*M^{\mathcal{D}_{S_3}}_z,\\
					(ii) &\tilde{S}_1 - S^*_2S_3 = C^*_2C_3, \,\,\,\,
					\tilde{C}_1 = (M^{\mathcal{D}_{S_3}}_{\tilde{\varphi}_2})^*C_3, \,\,\,\,
					M^{\mathcal{D}_{S_3}}_{{\psi}_1} = (M^{\mathcal{D}_{S_3}}_{\tilde{\varphi}_2})^*M^{\mathcal{D}_{S_3}}_z.
				\end{cases}
			\end{aligned}
		\end{equation}
By using the similar argument as in Theorem \ref{conddilation}	, it follows from 	\eqref{s111} and \eqref{s1112} that 
\begin{equation}\label{h11}\tilde{\varphi}_1(z) = H_1 + \tilde{H}^*_2z, \,\, \tilde{\varphi}_2(z) = 2H_2 + 2\tilde{H}^*_1z, \,\, \psi_1(z) =2 \tilde{H}_1 + 2H^*_2z, \,\, \psi_2(z) = \tilde{H}_2 + H^*_1z,\end{equation}
		where $H_1, H_2, \tilde{H}_1, \tilde{H}_2 \in \mathcal{B}(\mathcal{D}_{S_3})$. Hence, it follows from \eqref{xi}, \eqref{s111}, \eqref{s1112} and \eqref{h11} that 
		\begin{equation}\label{Eii}
			\begin{aligned}
				&M^{\mathcal{D}_{S_3}}_{\tilde{\varphi}_1}=E_1 =
				\begin{bmatrix}
					H_1 & 0 & 0 & \dots\\
					\tilde{H}^*_2 & H_1 & 0 & \dots\\
					0 & \tilde{H}^*_2 & H_1 & \dots\\
					\vdots & \vdots & \vdots & \ddots
				\end{bmatrix}, \,\,
				M^{\mathcal{D}_{S_3}}_{\tilde{\varphi}_2}=E_2 =
				\begin{bmatrix}
					2H_2 & 0 & 0 & \dots\\
					2\tilde{H}^*_1 & 2H_2 & 0 & \dots\\
					0 &2 \tilde{H}^*_1 & 2H_2 & \dots\\
					\vdots & \vdots & \vdots & \ddots
				\end{bmatrix},\\
				&M^{\mathcal{D}_{S_3}}_{\psi_1}=\tilde{E}_1 =
				\begin{bmatrix}
					2\tilde{H}_1 & 0 & 0 & \dots\\
					2H^*_2 & 2\tilde{H}_1 & 0 & \dots\\
					0 & 2H^*_2 & 2\tilde{H}_1 & \dots\\
					\vdots & \vdots & \vdots & \ddots
				\end{bmatrix}, \,\,
				M^{\mathcal{D}_{S_3}}_{\psi_2}=\tilde{E}_2 =
				\begin{bmatrix}
					\tilde{H}_2 & 0 & 0 & \dots\\
					H^*_1 & \tilde{H}_2 & 0 & \dots\\
					0 & H^*_1 & \tilde{H}_2 & \dots\\
					\vdots & \vdots & \vdots & \ddots
				\end{bmatrix}
			\end{aligned}
		\end{equation} on  $\mathcal{D}_{S_3} \oplus \mathcal{D}_{S_3} \oplus \dots.$
From \eqref{s111}, we have 
\begin{equation}\label{c111}
\begin{aligned}
C_1&= (M^{\mathcal{D}_{S_3}}_{\tilde{\psi}_2})^*C_3\\&=
				\begin{bmatrix}
					\tilde{H}^*_2D_{S_3}\\
					0\\
					0\\
					\vdots
				\end{bmatrix}.
			\end{aligned}
		\end{equation}
Similarly, from \eqref{s111},\eqref{s1112} and \eqref{Eii}, we deduce that 		
		\begin{equation}\label{c112}
			\begin{aligned}
			C_2 =\begin{bmatrix}
					2\tilde{H}^*_1D_{S_3}\\
					0\\
					0\\
					\vdots
				\end{bmatrix}, 
				\tilde{C}_1 =
				\begin{bmatrix}
					2H^*_2D_{S_3}\\
					0\\
					0\\
					\vdots
				\end{bmatrix}, 
				\tilde{C}_2 =
				\begin{bmatrix}
					H^*_1D_{S_3}\\
					0\\
					0\\
					\vdots
				\end{bmatrix},
			\end{aligned}
		\end{equation} respectively.
From \eqref{Eii}, \eqref{c111} and \eqref{c112}, we have
		\begin{equation*}
			\begin{aligned}
				&X_1 =
				\begin{bmatrix}
					S_1 & 0 & 0 & \dots\\
					\tilde{H}^*_2D_{S_3} & H_1 & 0 & \dots\\
					0 & \tilde{H}^*_2 & H_1 & \dots\\
					0 & 0 & \tilde{H}^*_2 & \dots\\
					\vdots & \vdots & \vdots & \ddots
				\end{bmatrix}, \,\,
				X_2 =
				\begin{bmatrix}
					S_2 & 0 & 0 & \dots\\
					2\tilde{H}^*_1D_{S_3} & 2H_2 & 0 & \dots\\
					0 & 2\tilde{H}^*_1 &2 H_2 & \dots\\
					0 & 0 & 2\tilde{H}^*_1 & \dots\\
					\vdots & \vdots & \vdots & \ddots
				\end{bmatrix},\\
				&\tilde{X}_1 =
				\begin{bmatrix}
					\tilde{S}_1 & 0 & 0 & \dots\\
					2H^*_2D_{S_3} & 2\tilde{H}_1 & 0 & \dots\\
					0 & 2H^*_2 & 2\tilde{H}_1 & \dots\\
					0 & 0 & 2H^*_2 & \dots\\
					\vdots & \vdots & \vdots & \ddots
				\end{bmatrix}, \,\,
				\tilde{X}_2 =
				\begin{bmatrix}
					\tilde{S}_2 & 0 & 0 & \dots\\
					H^*_1D_{S_3} & \tilde{H}_2 & 0 & \dots\\
					0 & H^*_1 & \tilde{H}_2 & \dots\\
					0 & 0 & H^*_1 & \dots\\
					\vdots & \vdots & \vdots & \ddots
				\end{bmatrix}
			\end{aligned}
		\end{equation*} with respect to the decomposition $\mathcal{\tilde{K}} = \mathcal{H} \oplus l^2(\mathcal{D}_{S_3}).$
As $X_1X_3=X_3X_1,$ we obtain
		\begin{equation}\label{s11s33}
			\begin{aligned}
				\begin{bmatrix}
					S_1 & 0 & 0 & \dots\\
					\tilde{H}^*_2D_{S_3} & G_1 & 0 & \dots\\
					0 & \tilde{H}^*_2 & G_1 & \dots\\
					0 & 0 & \tilde{H}^*_2 & \dots\\
					\vdots & \vdots & \vdots & \ddots
				\end{bmatrix}
				\begin{bmatrix}
					S_3 & 0 & 0 & \dots\\
					D_{S_3} & 0 & 0 & \dots\\
					0 & I & 0 & \dots\\
					0 & 0 & I & \dots\\
					\vdots & \vdots & \vdots & \ddots
				\end{bmatrix}
				&=
				\begin{bmatrix}
					S_3 & 0 & 0 & \dots\\
					D_{S_3} & 0 & 0 & \dots\\
					0 & I & 0 & \dots\\
					0 & 0 & I & \dots\\
					\vdots & \vdots & \vdots & \ddots
				\end{bmatrix}
				\begin{bmatrix}
					S_1 & 0 & 0 & \dots\\
					\tilde{H}^*_2D_{S_3} & G_1 & 0 & \dots\\
					0 & \tilde{H}^*_2 & G_1 & \dots\\
					0 & 0 & \tilde{H}^*_2 & \dots\\
					\vdots & \vdots & \vdots & \ddots
				\end{bmatrix}.
			\end{aligned}
		\end{equation}
By equating the $(2,1)$ entries on both sides of the aforementioned equation, we get
		\begin{equation}\label{h123}
			\begin{aligned}
				D_{S_3}S_1 = H_1D_{S_3} + \tilde{H}^*_2D_{S_3}S_3.
			\end{aligned}
		\end{equation}
As $X_2X_3=X_3X_2$ and  $\tilde{X}_jX_3=X_3\tilde{X}_j$ for $1\leq j \leq 2,$  similarly we can also deduce that 
		\begin{equation}\label{h124}
			\begin{aligned}
				D_{S_3}\tilde{S}_2 = \tilde{H}_2D_{S_3} + H^*_1D_{S_3}S_3, ~~D_{S_3}\frac{S_2}{2} = H_2D_{S_3} + \tilde{H}^*_1D_{S_3}S_3 \,\, \text{and} \,\,
				D_{S_3}\frac{\tilde{S}_1}{2} = \tilde{H}_1D_{S_3} + H^*_2D_{S_3}S_3.
			\end{aligned}
		\end{equation}
It follows from Lemma \ref{s1s3}, \eqref{h123} and \eqref{h124} that 	$$H_1 = G_1, H_2 = G_2, \tilde{H}_1 = \tilde{G}_1~{\rm{and}}~~\tilde{H}_2 = \tilde{G}_2.$$  As $X_1, X_2, X_3, \tilde{X}_1$ and $ \tilde{X}_2$ commute with each other,  we can also derive the conditions $(i)$ and $(ii)$ as well. Combining all the aforementioned arguments, we conclude that  $\textbf{X}$ and $\textbf{W}$ are unitarily equivalent.  This completes the proof.
	\end{proof}
	
	\section{Functional model for $\Gamma_{E(3; 3; 1, 1, 1)}$-Contractions and $\Gamma_{E(3; 2; 1, 2)}$-Contractions}
	
	In this section, we construct a concrete and explicit functional model for a  $\Gamma_{E(3; 3; 1, 1, 1)}$-contraction $\textbf{T}= (T_1, \dots, T_7)$ and for a $\Gamma_{E(3; 2; 1, 2)}$- contraction $\textbf{S}= (S_1, S_2, S_3, \tilde{S}_1, \tilde{S}_2).$ Let $T\in \mathcal B(\mathcal H)$ such that  $\|T\|\leq 1.$ The dimensions of the defect spaces $\mathcal{D}_T$ and $\mathcal{D}_{T^*}$ for operators $T$ and $T^*$ are denoted by $\delta_T$ and $\delta_{T^*}$, respectively. It is important to observe that $\delta_T = 0$ signifies that $T$ is an isometry, $\delta_{T^*} = 0$ implies that $T$ is a co-isometry, and the condition $\delta_T = \delta_{T^*} = 0$ indicates that $T$ is a unitary operator. We  state the following proposition without proof. This proposition is useful  in establishing the functional model for a  $\Gamma_{E(3; 3; 1, 1, 1)} $-contraction $\textbf{T}$ and for a $\Gamma_{E(3; 2; 1, 2)}$-contraction $\textbf{S}.$ 	
\begin{prop}[\cite{Spal 2014}, Proposition 5.2.]\label{vstar}
		Let $T$ be a contraction and $V$ be the minimal isometric dilation of $T$. Then the defect spaces of $T^*$ and $V^*$ are of equal dimension.
	\end{prop}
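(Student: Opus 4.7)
The plan is to pass to the explicit Sch\"affer realization of the minimal isometric dilation, where the claim reduces to a direct computation identifying the wandering subspace of $V$ with $\mathcal{D}_{T^*}$. Realize $V$ on $\mathcal{K} = \mathcal{H} \oplus \ell^2(\mathcal{D}_T)$ by
\begin{equation*}
V(h, d_0, d_1, d_2, \ldots) = (Th,\, D_T h,\, d_0,\, d_1, \ldots),
\end{equation*}
so that $V^*(h, d_0, d_1, d_2, \ldots) = (T^* h + D_T d_0,\, d_1,\, d_2, \ldots)$. Since $V$ is an isometry, $I - VV^*$ is already the orthogonal projection onto $(V\mathcal{K})^\perp$, whence $\mathcal{D}_{V^*} = \operatorname{Ran}(I - VV^*) = \mathcal{K} \ominus V\mathcal{K}$, and the task reduces to exhibiting a unitary between this wandering subspace and $\mathcal{D}_{T^*}$.

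A coordinate-wise expansion, together with the intertwining relations $T D_T = D_{T^*} T$ and $D_T T^* = T^* D_{T^*}$ and the defect identities $D_{T^*}^2 = I - TT^*$, $T^*T = I - D_T^2$, yields
\begin{equation*}
(I - VV^*)(h, d_0, d_1, \ldots) = \bigl(D_{T^*} u,\; -T^* u,\; 0,\; 0,\, \ldots\bigr),
\end{equation*}
where $u := D_{T^*} h - T d_0$. One checks that $u \in \mathcal{D}_{T^*}$ (using $T \mathcal{D}_T \subseteq \mathcal{D}_{T^*}$) and that $T^* u \in \mathcal{D}_T$ (using $T^* \mathcal{D}_{T^*} \subseteq \mathcal{D}_T$), so the right-hand side genuinely lies in $\mathcal{K}$. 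This motivates defining
\begin{equation*}
\Psi \colon \mathcal{D}_{T^*} \longrightarrow \mathcal{D}_{V^*}, \qquad \Psi(u) = (D_{T^*} u,\, -T^* u,\, 0,\, 0,\, \ldots).
\end{equation*}

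It remains to verify that $\Psi$ is a unitary, which will give $\dim \mathcal{D}_{T^*} = \dim \mathcal{D}_{V^*}$. Isometry is immediate from $\|\Psi u\|^2 = \langle D_{T^*}^2 u, u\rangle + \langle TT^* u, u\rangle = \langle (D_{T^*}^2 + TT^*)u, u\rangle = \|u\|^2$. Surjectivity follows from the formula for $I - VV^*$ displayed above: specializing to $d_0 = 0$ yields $\Psi(D_{T^*} h) \in \operatorname{Ran}(I - VV^*)$ for every $h \in \mathcal{H}$, so $\Psi(\operatorname{Ran} D_{T^*}) \subseteq \mathcal{D}_{V^*}$; since $\operatorname{Ran} D_{T^*}$ is dense in $\mathcal{D}_{T^*}$, $\Psi$ is isometric, and $\mathcal{D}_{V^*}$ is closed, the range of $\Psi$ fills out all of $\mathcal{D}_{V^*}$. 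Uniqueness up to unitary equivalence of the minimal isometric dilation ensures that the conclusion is independent of the Sch\"affer model. The main obstacle is purely computational: one has to expand $(I - VV^*)\xi$ entry by entry and keep track of several defect/intertwining identities carefully enough to recognize the common factor $u = D_{T^*} h - T d_0$, after which the map $\Psi$ and its unitarity read off immediately.
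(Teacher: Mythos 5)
The paper does not prove this proposition; it is quoted from \cite{Spal 2014} and explicitly ``stated without proof,'' so there is no in-paper argument to compare against. Your computation in the Sch\"affer model is correct: the identity $(I-VV^*)(h,d_0,d_1,\dots)=(D_{T^*}u,-T^*u,0,\dots)$ with $u=D_{T^*}h-Td_0$ checks out using $TD_T=D_{T^*}T$ and $D_TT^*=T^*D_{T^*}$, and the map $\Psi$ is indeed a unitary from $\mathcal{D}_{T^*}$ onto $\mathcal{K}\ominus V\mathcal{K}=\mathcal{D}_{V^*}$, which is the standard wandering-subspace identification. One organizational quibble: the sentence you label ``surjectivity'' actually proves the containment $\Psi(\mathcal{D}_{T^*})\subseteq\mathcal{D}_{V^*}$; surjectivity is the reverse inclusion $\operatorname{Ran}(I-VV^*)\subseteq\Psi(\mathcal{D}_{T^*})$, which follows from your displayed formula with arbitrary $d_0$ together with $T\mathcal{D}_T\subseteq\mathcal{D}_{T^*}$ (so that $u\in\mathcal{D}_{T^*}$) and the fact that both sets are closed --- all ingredients you already have, just assembled in the wrong order.
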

We now state a functional model for  pure $\Gamma_{E(3; 3; 1, 1, 1)}$-isometry.
	\begin{thm}\label{thm-14}[Theorem $4.6$, \cite{apal2}]\label{pure}
		Let $\textbf{V} = (V_1, \dots, V_7)$ be a commuting $7$-tuple of bounded operators on a separable Hilbert space $\mathcal{H}$. Then $\textbf{V}$ is a pure $\Gamma_{E(3; 3; 1, 1, 1)}$-isometry if and only if there exists a separable Hilbert space $\mathcal{E}$, a unitary $U : \mathcal{H} \to H^2(\mathcal{E})$ , functions $\Phi_1, \dots, \Phi_6$ in $H^{\infty}(\mathcal B(\mathcal{E}))$ and bounded operators $F_1,F_2,\ldots,F_6$ such that
		\begin{enumerate}
			\item $V_7 = U^*M^{\mathcal{E}}_zU$ and $V_i = U^*M^{\mathcal{E}}_{\Phi_i}U,$ where $\Phi_i(z) = F_i + F^*_{7-i}z,  1\leq i \leq  6$;
			
			\item the $H^{\infty}$ norm of the operator valued functions $F_i+F_{7-i}^*z$, $1\leq i \leq  6$, is at most $1$ for all $z\in \mathbb T;$ 
			
			\item $[F_i, F_j] = 0$ and $ [F_i, F^*_{7-j}] = [F_j, F^*_{7-i}]$ for $1\leq i, j \leq  6$.
		\end{enumerate}
	\end{thm}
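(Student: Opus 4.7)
The plan is to establish both implications by leveraging the characterization of $\Gamma_{E(3;3;1,1,1)}$-isometries from Theorem \ref{thm-12}, in particular the identities $V_i=V_{7-i}^*V_7$ among the components.

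For the forward direction, I would first use that $V_7$ is a pure isometry to produce, via the standard Wold decomposition applied to a pure shift, a separable Hilbert space $\mathcal E:=\ker V_7^*$ and a unitary $U:\mathcal H\to H^2(\mathcal E)$ with $UV_7U^*=M_z^{\mathcal E}$. Because each $V_i$ commutes with $V_7$, the conjugate $UV_iU^*$ commutes with $M_z^{\mathcal E}$, so by the classical analytic Toeplitz theorem it has the form $M_{\Phi_i}$ for some $\Phi_i\in H^\infty(\mathcal B(\mathcal E))$. To extract the precise affine form $\Phi_i(z)=F_i+F_{7-i}^*z$, I would translate the relation $V_i=V_{7-i}^*V_7$ (Theorem \ref{thm-12}) into $M_{\Phi_i}=M_{\Phi_{7-i}}^*M_z$. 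Representing these operators as lower-triangular Toeplitz matrices on $\ell^2(\mathcal E)$ with Taylor coefficients $A_n^{(i)}$ of $\Phi_i$, a direct block-matrix computation shows that the lower-triangular Toeplitz structure of the left-hand side forces $A_n^{(i)}=0$ for all $n\ge 2$, together with $A_0^{(i)}=A_1^{(7-i)*}$ and $A_1^{(i)}=A_0^{(7-i)*}$; setting $F_i:=A_0^{(i)}$ yields (1). Condition (2) is then immediate, since each $V_i$ is a contraction by Theorem \ref{thm-12} (so $\|V_i\|=\|V_{7-i}^*V_7\|\le 1$) and $\|M_{\Phi_i}\|=\|\Phi_i\|_{H^\infty}$. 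For (3), the commutativity of $V_i$ and $V_j$ translates to $\Phi_i(z)\Phi_j(z)=\Phi_j(z)\Phi_i(z)$; matching the coefficients of $1$, $z$, and $z^2$ gives $[F_i,F_j]=0$ and $[F_i,F_{7-j}^*]=[F_j,F_{7-i}^*]$ (the quadratic coefficient merely reproduces the constant-term commutator after taking adjoints).

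For the converse, given bounded operators $F_1,\dots,F_6$ on a separable Hilbert space $\mathcal E$ satisfying (2) and (3), I would set $\Phi_i(z):=F_i+F_{7-i}^*z$ and consider the tuple $(M_{\Phi_1},\dots,M_{\Phi_6},M_z)$ on $H^2(\mathcal E)$. Condition (3) ensures $\Phi_i(z)\Phi_j(z)=\Phi_j(z)\Phi_i(z)$ pointwise, so the $M_{\Phi_i}$'s mutually commute (and plainly commute with $M_z$). A straightforward coefficient check verifies $M_{\Phi_i}=M_{\Phi_{7-i}}^*M_z$, while (2) gives $\|M_{\Phi_i}\|\le 1$. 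Invoking Theorem \ref{thm-12}, the tuple $(M_{\Phi_1},\dots,M_{\Phi_6},M_z)$ is a $\Gamma_{E(3;3;1,1,1)}$-isometry on $H^2(\mathcal E)$, and since $M_z$ is a pure isometry, the tuple is pure. Conjugation by $U^*$ transports this structure back to $\mathbf V$ on $\mathcal H$.

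The main obstacle is the forward-direction step of pinning down the affine form of the symbols $\Phi_i$: from the a priori non-analytic identity $M_{\Phi_i}=M_{\Phi_{7-i}}^*M_z$ one must simultaneously deduce that all Taylor coefficients of order $\ge 2$ vanish for every $i$, and that the remaining coefficients interlock via $A_0^{(i)}=A_1^{(7-i)*}$. This is the only place where the lower-triangular Toeplitz bookkeeping is genuinely needed; the remaining ingredients are either structural (Wold decomposition, analytic Toeplitz characterization, Theorem \ref{thm-12}) or coefficient comparison in a polynomial of degree at most two in $z$.
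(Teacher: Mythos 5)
Your proof is correct. Note, however, that the paper does not actually prove this statement: it is quoted verbatim as [Theorem 4.6, \cite{apal2}] from the companion paper, so there is no internal proof to compare against. Your argument — Wold decomposition of the pure isometry $V_7$, the commutant-of-the-shift characterization to get $M_{\Phi_i}$, the identity $M_{\Phi_i}=M_{\Phi_{7-i}}^*M_z$ from Theorem \ref{thm-12} forcing the affine form of the symbols by Fourier-coefficient comparison, and coefficient matching in $\Phi_i\Phi_j=\Phi_j\Phi_i$ for condition (3) — is the standard route, and the key coefficient computation is exactly the one the paper itself carries out in the proof of Theorem \ref{conddilation} (see the argument surrounding \eqref{phipsi} and \eqref{A_i}), so your proposal is fully consistent with the techniques the authors rely on.
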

We will construct a concrete and explicit functional model for a  $\Gamma_{E(3; 3; 1, 1, 1)} $-contraction $\mathbf{T} = (T_1, \dots, T_7)$, under the assumption that the adjoint $\mathbf{T}^* = (T_1^*, \dots, T_7^*)$ has fundamental operators $F_i$ and $F_{7-i},$ for $1\leq i \leq 6,$ which satisfy the following conditions:
\begin{equation}
[F_i,F_j]=0 ~~{\rm{and}}~~[F_{7-i}^*,F_j]=[F_{7-j}^*,F_i], 1\leq i,j \leq 6.
\end{equation}
\begin{thm}
		Let $\textbf{T} = (T_1, \dots, T_7)$ be a $\Gamma_{E(3; 3; 1, 1, 1)}$-contraction on some Hilbert space $\mathcal{H}$ such that the adjoint $\textbf{T}^* = (T^*_1, \dots, T^*_7)$  has fundamental operators $F_i$ and $F_{7-i}, 1\leq i \leq 6,$ which satisfy the following conditions:
\begin{enumerate}
			\item[$(i)$] $[F_i, F_j] = 0$ for $ 1\leq i,j \leq 6$,
			
			\item[$(ii)$] $[F^*_{7-i}, F_j] = [F^*_{7-j}, F_i]$ for $1 \leq i,j \leq 6$.
		\end{enumerate}
		Let  $\mathcal{K}^* = \mathcal{H} \oplus \mathcal{D}_{T^*_7} \oplus \mathcal{D}_{T^*_7} \oplus \dots =  \mathcal{H} \oplus \ell^2(\mathcal{D}_{T^*_7}).$ Let $\tilde{\textbf{V}} = (\tilde{V}_1, \dots, \tilde{V}_7)$ be a $7$-tuple of operators defined on $\mathcal{K}^*$ by
		\begin{equation}\label{tildev}
			\begin{aligned}
				\tilde{V}_i =
				\begin{bmatrix}
					T_i & D_{T^*_7}F_{7-i} & 0 & 0 & \dots\\
					0 & F^*_i & F_{7-i} & 0 & \dots\\
					0 & 0 & F^*_i & F_{7-i} & \dots\\
					0 & 0 & 0 & F^*_i & \dots\\
					\vdots & \vdots & \vdots & \vdots & \ddots
				\end{bmatrix}
				  ~~~~\text{and}~~~~
				\tilde{V}_7 =
				\begin{bmatrix}
					T_7 & D_{T^*_7} & 0 & \dots\\
					0 & 0 & I & 0 & \dots\\
					0 & 0 & 0 & I & \dots\\
					\vdots & \vdots & \vdots & \vdots & \ddots
				\end{bmatrix}.
			\end{aligned}
		\end{equation}
		Then
		\begin{enumerate}
			\item $\tilde{\textbf{V}}$ is a $\Gamma_{E(3; 3; 1, 1, 1)} $-co-isometric dilation of $\textbf{T}$, $\mathcal{H}$ serves as a common invariant subspace of $\tilde{V}_1, \dots, \tilde{V}_7$ and $\tilde{V}_i|_{\mathcal{H}} = T_i$ for $1\leq i \leq 7;$
			
			\item there exists an orthogonal decomposition $\mathcal{K}^* = \mathcal{K}_1 \oplus \mathcal{K}_2$ of $\mathcal{K}^*$ into reducing subspaces of $\tilde{V}_1, \dots, \tilde{V}_7$ such that $(\tilde{V}_1|_{\mathcal{K}_1}, \dots, \tilde{V}_7|_{\mathcal{K}_1})$ is a $\Gamma_{E(3; 3; 1, 1, 1)}$-unitary and $(\tilde{V}_1|_{\mathcal{K}_2}, \dots, \tilde{V}_7|_{\mathcal{K}_2})$ is a \textit{pure} $\Gamma_{E(3; 3; 1, 1, 1)}$-co-isometry;

			\item $\mathcal{K}_2$ can be identified as $H^2({\mathcal{D}_{\tilde{V}_7}})$, where $\mathcal{D}_{\tilde{V}_7}$ has same dimension as of $\mathcal{D}_{T_7}$. The operators $\tilde{V}_1|_{\mathcal{K}_2}, \dots, \tilde{V}_7|_{\mathcal{K}_2}$ are unitarily equivalent to $M_{A_1 + A^*_6\overline{z}}, \dots, M_{A_6 + A^*_1\overline{z}}, M_{\overline{z}}$  on the vector-valued Hardy space $H^2({\mathcal{D}_{\tilde{V}_7}})$, where $A_i,1\leq i \leq 6$, are the fundamental operators of $\tilde{\textbf{V}} = (\tilde{V}_1, \dots, \tilde{V}_7)$.		\end{enumerate}
	\end{thm}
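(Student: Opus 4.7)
The plan is to apply the conditional dilation Theorem~\ref{conddilation} to the adjoint tuple $\mathbf{T}^* = (T_1^*, \dots, T_7^*)$ and then take adjoints throughout. Since $\mathbf{T}^*$ is itself a $\Gamma_{E(3;3;1,1,1)}$-contraction and its fundamental operators $F_i, F_{7-i}$ satisfy conditions $(i)$--$(ii)$ by hypothesis, Theorem~\ref{conddilation} yields a minimal $\Gamma_{E(3;3;1,1,1)}$-isometric dilation $\mathbf{V} = (V_1, \dots, V_7)$ of $\mathbf{T}^*$ on $\mathcal{H} \oplus \ell^2(\mathcal{D}_{T_7^*}) = \mathcal{K}^*$, presented as the lower-triangular Sch\"{a}ffer matrices of \eqref{v7}. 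Setting $\tilde{V}_i := V_i^*$ and computing the adjoints of those matrices reproduces exactly the upper-triangular formulas of \eqref{tildev}. The invariance of $\mathcal{H}$ and the identities $\tilde{V}_i|_{\mathcal{H}} = T_i$ are read off directly from the block form, and the adjoint of a $\Gamma_{E(3;3;1,1,1)}$-isometric dilation of $\mathbf{T}^*$ is by definition a $\Gamma_{E(3;3;1,1,1)}$-co-isometric dilation of $\mathbf{T}$; this settles $(1)$.

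For $(2)$, I would apply a Wold-type decomposition to the $\Gamma_{E(3;3;1,1,1)}$-isometry $\mathbf{V}$. Decomposing $V_7$ by the classical Wold theorem gives reducing subspaces $\mathcal{K}^* = \mathcal{K}_1 \oplus \mathcal{K}_2$ for $V_7$ with $V_7|_{\mathcal{K}_1}$ unitary and $V_7|_{\mathcal{K}_2}$ a pure isometry. The relations $V_i = V_{7-i}^* V_7$ of Theorem~\ref{thm-12} then force $\mathcal{K}_1, \mathcal{K}_2$ to reduce every $V_i$, so $\mathbf{V}|_{\mathcal{K}_1}$ is a $\Gamma_{E(3;3;1,1,1)}$-unitary and $\mathbf{V}|_{\mathcal{K}_2}$ is a pure $\Gamma_{E(3;3;1,1,1)}$-isometry. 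Since $\mathcal{K}_1, \mathcal{K}_2$ also reduce $\tilde{\mathbf{V}} = \mathbf{V}^*$, taking adjoints produces the desired decomposition of $\tilde{\mathbf{V}}$ into a $\Gamma$-unitary part (a tuple of normals is self-adjointly closed) and a pure $\Gamma$-co-isometry.

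For $(3)$, apply the functional model Theorem~\ref{pure} to $\mathbf{V}|_{\mathcal{K}_2}$: there exist a separable Hilbert space $\mathcal{E}$, operators $B_1, \dots, B_6 \in \mathcal{B}(\mathcal{E})$, and a unitary $U \colon \mathcal{K}_2 \to H^2(\mathcal{E})$ with $U V_7|_{\mathcal{K}_2} U^* = M_z$ and $U V_i|_{\mathcal{K}_2} U^* = M_{B_i + B_{7-i}^* z}$ for $1 \leq i \leq 6$. Because the $\Gamma$-unitary part contributes no defect for $V_7$, one has $\mathcal{E} \cong \mathcal{D}_{V_7^*}$, and Proposition~\ref{vstar} applied to the minimal isometric dilation $V_7$ of $T_7^*$ gives $\dim \mathcal{D}_{V_7^*} = \dim \mathcal{D}_{(T_7^*)^*} = \dim \mathcal{D}_{T_7} = \dim \mathcal{D}_{\tilde{V}_7}$. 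Taking adjoints under $U$ converts $M_z$ into $M_z^* = M_{\bar z}$ and $M_{B_i + B_{7-i}^* z}$ into $M_{B_i^* + B_{7-i} \bar z}$; setting $A_i := B_i^*$ puts these in the required form $M_{A_i + A_{7-i}^* \bar z}$ on $H^2(\mathcal{D}_{\tilde{V}_7})$.

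The main technical obstacle will be the final identification of the $A_i$ with the fundamental operators of $\tilde{\mathbf{V}}$: one must transport the operator identities satisfied by the $B_i$ (coming from Theorem~\ref{pure} applied to $\mathbf{V}|_{\mathcal{K}_2}$) through the adjoint correspondence to the defining equations of the fundamental operators of the $\Gamma$-co-isometry $\tilde{\mathbf{V}}|_{\mathcal{K}_2}$, and then invoke the uniqueness clause of Lemma~\ref{FiFj} to conclude that $A_i = B_i^*$ are precisely those fundamental operators. Care is needed because the roles of $F_i$ and $F_{7-i}^*$ get interchanged under the adjoint, and the identification $\mathcal{E} \cong \mathcal{D}_{\tilde V_7}$ must be made explicit so that the equations of Lemma~\ref{FiFj} hold literally on $\mathcal{D}_{\tilde V_7}$ rather than merely up to a Hilbert space isomorphism.
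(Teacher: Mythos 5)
Your proposal is correct and follows essentially the same route as the paper: dualize the conditional dilation of Theorem \ref{conddilation} applied to $\mathbf{T}^*$, invoke the Wold decomposition for $\Gamma_{E(3;3;1,1,1)}$-isometries to split off the unitary part, and read off the pure part from the functional model of Theorem \ref{pure} together with Proposition \ref{vstar}. The one step you flag as the main technical obstacle --- identifying the model symbols with the fundamental operators of $\tilde{\mathbf{V}}$ --- is dispatched in the paper by the observation that the $\Gamma_{E(3;3;1,1,1)}$-unitary summand contributes nothing to $\mathcal{D}_{\tilde{V}_7}$, so $\tilde{\mathbf{V}}$ and its pure co-isometric part share the same fundamental operators $A_i$, and Theorem \ref{pure} already expresses the model in terms of these, making your proposed detour through the uniqueness clause of Lemma \ref{FiFj} unnecessary.
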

	
	\begin{proof}
It follows from  \eqref{tildev} that $\tilde{V}^*_7$ on $\mathcal{K}^*$ is the minimal isometric dilation of $T^*_7.$  Observe that for $1\leq i \leq 6$ 
		\begin{equation*}
			\begin{aligned}
				&\tilde{V}^*_i =
				\begin{bmatrix}
					T^*_i & 0 & 0 & \dots\\
					F^*_{7-i}D_{T^*_7} & F_i & 0 & \dots\\
					0 & F^*_{7-i} & F_i & \dots\\
					0 & 0 & F^*_{7-i} & \dots\\
					\vdots & \vdots & \vdots & \ddots
				\end{bmatrix}, \,\,
				\tilde{V}^*_7 =
				\begin{bmatrix}
					T^*_7 & 0 & 0 & 0& \dots\\
					D_{T^*_7} & 0 & 0 & 0&\dots\\
					0 &  I& 0& 0 &\dots\\
					0 & 0 & I & 0 &\dots\\
					\vdots & \vdots & \vdots &\vdots & \ddots
				\end{bmatrix}.
			\end{aligned}
		\end{equation*}
		By Theorem \ref{conddilation}, it follows that $\tilde{V}^* = (\tilde{V}^*_1, \dots, \tilde{V}^*_7)$ is minimal $\Gamma_{E(3; 3; 1, 1, 1)}$-isometric dilation of $\textbf{T}^* = (T_1^*, \dots, T_7^*).$ Consequently,  we conclude that $\tilde{\textbf{V}}$ is a $\Gamma_{E(3; 3; 1, 1, 1)}$-co-isometric dilation of $\textbf{T}$. Clearly, $\mathcal{H}$ is a common invariant subspace of $\tilde{V}_1, \dots, \tilde{V}_7,$ and  $\tilde{V}_i|_{\mathcal{H}} = T_i$ for $1\leq i \leq 7$. This proves $(1)$.
		
As ${\tilde{V}}^*$ is a $\Gamma_{E(3; 3; 1, 1,1)}$-isometry,  by \textit{Wold decomposition} [Theorem $4.1$,\cite{apal2}], there exists an orthogonal decomposition of  $\mathcal{K}^* = \mathcal{K}_1 \oplus \mathcal{K}_2$ into two reducing subspaces $\mathcal{K}_1$ and $\mathcal{K}_2$ such that $(\tilde{V}^*_1|_{\mathcal{K}_1}, \dots, \tilde{V}^*_7|_{\mathcal{K}_1})$ is a $\Gamma_{E(3; 3; 1, 1, 1)}$-unitary and $(\tilde{V}^*_1|_{\mathcal{K}_2}, \dots, \tilde{V}^*_7|_{\mathcal{K}_2})$ is a pure $\Gamma_{E(3; 3; 1, 1, 1)}$-isometry. Thus $(\tilde{V}_1|_{\mathcal{K}_2}, \dots, \tilde{V}_7|_{\mathcal{K}_2})$ is a pure $\Gamma_{E(3; 3; 1, 1, 1)}$-co-isometry. As $(\tilde{V}^*_1|_{\mathcal{K}_1}, \dots, \tilde{V}^*_7|_{\mathcal{K}_1})$ is a $\Gamma_{E(3; 3; 1, 1, 1)}$-unitary, it yields from [Theorem $3.2$,\cite{apal2}] that  $(\tilde{V}^*_1|_{\mathcal{K}_1}, \dots, \tilde{V}^*_7|_{\mathcal{K}_1})$ is a $\Gamma_{E(3; 3; 1, 1, 1)}$-contraction and  $\tilde{V}^*_7|_{\mathcal{K}_1}$ is unitary, and so $(\tilde{V}_1|_{\mathcal{K}_1}, \dots, \tilde{V}_7|_{\mathcal{K}_1})$ is a $\Gamma_{E(3; 3; 1, 1, 1)}$-contraction and  $\tilde{V}_7|_{\mathcal{K}_1}$ is a unitary. This indicates that $(\tilde{V}_1|_{\mathcal{K}_1}, \dots, \tilde{V}_7|_{\mathcal{K}_1})$ is a $\Gamma_{E(3; 3; 1, 1, 1)}$-unitary.  Set $\tilde{V}^{(1)}_{i}:=\tilde{V}_i|_{\mathcal{K}_1}$ and $\tilde{V}^{(2)}_{i}:=\tilde{V}_i|_{\mathcal{K}_2}$ for $1\leq i\leq 7$. Thus, $\tilde{V}_i$ have the following form  
		\begin{equation*}
			\begin{aligned}
				&\tilde{V}_i =
				\begin{bmatrix}
					\tilde{V}^{(1)}_{i} & 0\\
					0 & \tilde{V}^{(2)}_{i}
				\end{bmatrix},\;\; 1\leq i\leq 7,
			\end{aligned}
		\end{equation*}
 with respect to the decomposition $\mathcal{K}^* = \mathcal{K}_1 \oplus \mathcal{K}_2$. 
  As $(\tilde{V}^{(1)}_{1}, \dots, \tilde{V}^{(1)}_{7})$ is a $\Gamma_{E(3; 3; 1, 1, 1)}$-unitary,  we deduce that  $(\tilde{V}_1, \dots, \tilde{V}_7)$ and $(\tilde{V}_{1}^{(2)}, \dots, \tilde{V}_{7}^{(2)})$ have the same fundamental operators $A_i,1\leq i \leq 6.$ It follows from Theorem \ref{pure} that 
		\begin{enumerate}
			\item[$(i)$]  $\mathcal{K}_2$ can be identified with $H^2({\mathcal{D}_{\tilde{V}_7}})$;
			
			\item[$(ii)$] the operators $\tilde{V}_1|_{\mathcal{K}_2}, \dots, \tilde{V}_7|_{\mathcal{K}_2}$ are unitarily equivalent to $M_{A_1 + A^*_6\overline{z}}, \dots, M_{A_6 + A^*_1\overline{z}}, M_{\overline{z}},$ respectively, on the vector-valued Hardy space $H^2({\mathcal{D}_{\tilde{V}_7}})$.
		\end{enumerate}
Since $\tilde{V}^*_7$ is the  minimal isometric dilation of $T^*_7$, it follows from  Proposition \ref{vstar} that the dimensions of $\mathcal{D}_{T_7}$ and $\mathcal{D}_{\tilde{V}_7}$ are equal. This completes the proof.
	\end{proof}
We now state the model for a pure $\Gamma_{E(3; 2; 1, 2)}$ isometry.
\begin{thm}\label{thm-15} [Theorem $4.7$, \cite{apal2}]
		Let $\textbf{W} = (W_1, W_2, W_3, \tilde{W}_1, \tilde{W}_2)$ be a $5$-tuple of commuting bounded operators on a separable Hilbert space $\mathcal{H}$. Then $\textbf{W}$ is a pure $\Gamma_{E(3; 2; 1, 2)}$ isometry if and only if there exist a separable Hilbert space $\mathcal{F}$, a unitary $\tilde{U} : \mathcal{H} \to H^2(\mathcal{F})$, functions $\tilde{\Phi}_i$ in $H^{\infty}(\mathcal B(\mathcal{F}))$ and $\tilde{\Psi}_j$ in $H^{\infty}(\mathcal B(\mathcal{F}))$ for $1 \leq i, j\leq 2$  and bounded operators $G_1, 2G_2,2\tilde{G}_1,\tilde{G}_2$ on $\mathcal{F }$ such that
		\begin{enumerate}
			\item $W_3 = \tilde{U}^*M^{\mathcal{F}}_z\tilde{U}, W_1 = \tilde{U}^*M^{\mathcal{F}}_{\tilde{\Phi}_1}\tilde{U}, W_2 = \tilde{U}^*M^{\mathcal{F}}_{\tilde{\Psi}_1}\tilde{U}, \tilde{W}_1 = \tilde{U}^*M^{\mathcal{F}}_{\tilde{\Psi}_2}\tilde{U}$ and $ \tilde{W}_2 =\tilde{U}^*M^{\mathcal{F}}_{\tilde{\Phi}_2}\tilde{U},$ where $\tilde{\Phi}_1(z)=G_1+\tilde{G}_2^*z, \tilde{\Phi}_2(z)=\tilde{G}_2+zG_1^*$, $\tilde{\Psi}_1(z)=2G_2+2\tilde{G}_1^*z$ and $\tilde{\Psi}_2(z)=2\tilde{G}_1+2G_2^*z$ for all $z\in \mathbb T;$		
			\item the $H^{\infty}$ norm of the operator functions $G_1+\tilde{G}_{2}^*z, \tilde{G}_2+G_{1}^*z,G_2+\tilde{G}_{1}^*z$ and $\tilde{G}_1+G_{2}^*z$ are at most $1$;
			
			\item
			\begin{enumerate}
				\item $[G_1, \tilde{G}_2] = 0, [G_1, G_1^*] = [\tilde{G}_2, \tilde{G}_2^*]$;
				
				\item $[G_1,G_2]=[G_2,G_1],[G_1,\tilde{G}_1^*]=[G_2,\tilde{G}_2^*],[\tilde{G}_1,\tilde{G}_2]=[\tilde{G}_2,\tilde{G}_1]$;
				
				\item $[G_1, \tilde{G}_1] = 0, [G_1,G_2^*] = [\tilde{G}_1,\tilde{G}_2^*],[G_2,\tilde{G}_2]=0$;
				
				\item $[G_2,\tilde{G}_1] = 0, [G_2, G_2^*] = [\tilde{G}_1,\tilde{G}_1^*]$.
			\end{enumerate}
		\end{enumerate}
	\end{thm}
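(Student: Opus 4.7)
The plan is to imitate the argument that establishes Theorem \ref{thm-14}, whose reasoning is parallel in the $\Gamma_{E(3;3;1,1,1)}$ setting, but with the added complication of two coupled pairs of symbols rather than one. First I would exploit that, by definition of a pure $\Gamma_{E(3;2;1,2)}$-isometry, $W_3$ is a pure isometry, so the Wold structure theorem identifies $\mathcal{H}$ unitarily with the Hardy space $H^{2}(\mathcal{F})$, where $\mathcal{F} := \ker W_{3}^{*}$, via a unitary $\tilde{U}$ that conjugates $W_3$ into $M_{z}^{\mathcal{F}}$. Since $W_1, W_2, \tilde{W}_1, \tilde{W}_2$ pairwise commute with $W_3$, after transport under $\tilde{U}$ they lie in the commutant of $M_{z}^{\mathcal{F}}$, which is exactly the algebra of analytic Toeplitz operators $\{M_{\varphi}^{\mathcal{F}} : \varphi \in H^{\infty}(\mathcal{B}(\mathcal{F}))\}$. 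This produces symbols $\tilde{\Phi}_1, \tilde{\Phi}_2, \tilde{\Psi}_1, \tilde{\Psi}_2 \in H^{\infty}(\mathcal{B}(\mathcal{F}))$ implementing the required representations in (1).

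The second step is to pin down the explicit linear form of these symbols. By Theorem \ref{thm-13}, a $\Gamma_{E(3;2;1,2)}$-isometry satisfies $W_1 = \tilde{W}_2^{*} W_3$ and $W_2 = \tilde{W}_1^{*} W_3$, which transport to
\[
M_{\tilde{\Phi}_1}^{\mathcal{F}} = \bigl(M_{\tilde{\Phi}_2}^{\mathcal{F}}\bigr)^{*} M_{z}^{\mathcal{F}}, \qquad M_{\tilde{\Psi}_1}^{\mathcal{F}} = \bigl(M_{\tilde{\Psi}_2}^{\mathcal{F}}\bigr)^{*} M_{z}^{\mathcal{F}}.
\]
Expanding $\tilde{\Phi}_2(z) = \sum_{k \geq 0} A_{k} z^{k}$ and equating Fourier coefficients on both sides of the first identity forces $A_k = 0$ for $k \geq 2$. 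Setting $\tilde{G}_2 := A_0$ and $G_1 := A_1^{*}$ then yields $\tilde{\Phi}_2(z) = \tilde{G}_2 + G_1^{*} z$ and $\tilde{\Phi}_1(z) = G_1 + \tilde{G}_2^{*} z$. The same computation applied to the $\tilde{\Psi}$-pair, after absorbing the natural factor of $2$ arising from equations \eqref{funda11}, produces $\tilde{\Psi}_1(z) = 2G_2 + 2\tilde{G}_1^{*} z$ and $\tilde{\Psi}_2(z) = 2\tilde{G}_1 + 2G_2^{*} z$, as required. The $H^{\infty}$-norm bounds in (2) are then immediate from $\|M_{\varphi}^{\mathcal{F}}\| = \|\varphi\|_{\infty}$ combined with the contractivity of $W_1, \tilde{W}_2, W_2/2, \tilde{W}_1/2$, which is part of Theorem \ref{thm-13}.

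For (3), I would use the pairwise commutativity of $\{W_1, W_2, \tilde{W}_1, \tilde{W}_2\}$: each identity $M_{\alpha}^{\mathcal{F}} M_{\beta}^{\mathcal{F}} = M_{\beta}^{\mathcal{F}} M_{\alpha}^{\mathcal{F}}$ is equivalent to the pointwise polynomial identity $\alpha(z) \beta(z) = \beta(z) \alpha(z)$, which expands and splits into its $z^{0}$, $z^{1}$, $z^{2}$ coefficients. For instance, $\tilde{\Phi}_1 \tilde{\Phi}_2 = \tilde{\Phi}_2 \tilde{\Phi}_1$ yields $[G_1, \tilde{G}_2] = 0$ at degree $0$ and $[G_1, G_1^{*}] = [\tilde{G}_2, \tilde{G}_2^{*}]$ at degree $1$ (the $z^{2}$-equation being the adjoint of the degree-$0$ one), producing (a); the remaining five commutation identities yield (b), (c), (d) by the same coefficient comparison. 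The converse direction is routine: starting from operators satisfying (2) and (3), define the five multiplication operators via (1), verify $M_{\tilde{\Phi}_1}^{\mathcal{F}} = (M_{\tilde{\Phi}_2}^{\mathcal{F}})^{*} M_{z}^{\mathcal{F}}$ and $M_{\tilde{\Psi}_1}^{\mathcal{F}} = (M_{\tilde{\Psi}_2}^{\mathcal{F}})^{*} M_{z}^{\mathcal{F}}$ by direct computation, recover commutativity by reversing the coefficient matching, and invoke condition (4) of Theorem \ref{thm-13} to conclude the tuple is a $\Gamma_{E(3;2;1,2)}$-isometry; purity of $M_z$ is automatic. The main obstacle is the combinatorial bookkeeping in step three: there are six commuting pairs among the four operators, each contributing several coefficient equations, and one must verify carefully that they repackage exactly as (a)--(d) with no missing or spurious constraint, in particular that the degree-$2$ relations are automatically the adjoints of degree-$0$ relations already listed.
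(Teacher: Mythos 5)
The paper does not actually prove this statement: it is imported verbatim as [Theorem 4.7, \cite{apal2}] and used as a black box, so there is no in-paper proof to compare yours against. That said, your argument is sound and is the standard one for such functional models: Wold decomposition of the pure isometry $W_3$ to transport everything to $H^2(\mathcal{F})$, the commutant of $M_z$ to get analytic Toeplitz symbols, the identities $W_1=\tilde W_2^*W_3$ and $W_2=\tilde W_1^*W_3$ from Theorem \ref{thm-13} together with the Fourier-coefficient argument $M_{\Phi}=M_{\Psi}^*M_z \Rightarrow \Phi,\Psi$ linear (exactly the computation the present paper carries out around \eqref{phipsi} in the proof of Theorem \ref{conddilation}), and finally coefficient-matching of the six symbol commutators to extract (3)(a)--(d). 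I checked a representative pair ($\tilde\Phi_1\tilde\Phi_2=\tilde\Phi_2\tilde\Phi_1$ giving $[G_1,\tilde G_2]=0$ and $[G_1,G_1^*]=[\tilde G_2,\tilde G_2^*]$, with the degree-$2$ relation the adjoint of the degree-$0$ one) and the bookkeeping closes as you describe; the converse via condition (4) of Theorem \ref{thm-13} is likewise correct.
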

		
We will construct a concrete and explicit functional model for a  $\Gamma_{E(3; 2; 1, 2)} $-contraction $\textbf{S} = (S_1, S_2, S_3, \tilde{S}_1, \tilde{S}_2)$, under the assumption that the adjoint $\textbf{S}^* = (S_1^*, S_2^*, S_3^*, \tilde{S}_1^*, \tilde{S}_2^*)$ has fundamental operators.
	\begin{thm}
		Let $\textbf{S} = (S_1, S_2, S_3, \tilde{S}_1, \tilde{S}_2)$ be a $\Gamma_{E(3; 2; 1, 2)}$-contraction on a Hilbert space $\mathcal{H}$ such that the adjoint $\textbf{S}^* = (S^*_1, S^*_2, S^*_3, \tilde{S}^*_1, \tilde{S}^*_2)$ has fundamental operators  $G_1, 2G_2, 2\tilde{G}_1, \tilde{G}_2$ which satisfy the following conditions:
		\begin{enumerate}
			\item[$(i)$] $[G_1,\tilde{G}_i]=0$ for $1 \leq i \leq 2,$ $[G_2,\tilde{G}_j]=0$ for $1 \leq j \leq 2,$ and $[G_1, G_2] = [\tilde{G}_1, \tilde{G}_2]  = 0$;
			
			\item[$(ii)$] $[G_1, G^*_1] = [\tilde{G}_2, \tilde{G}^*_2], [G_2, G^*_2] = [\tilde{G}_1, \tilde{G}^*_1], [G_1, \tilde{G}^*_1] = [G_2, \tilde{G}^*_2], [\tilde{G}_1, G^*_1] = [\tilde{G}_2, G^*_2],$\\$ [G_1, G^*_2] = [\tilde{G}_1, \tilde{G}^*_2], [G^*_1, G_2] = [\tilde{G}^*_1, \tilde{G}_2]$.
		\end{enumerate}	
Let  $\mathcal{\tilde{K}}^* = \mathcal{H} \oplus \mathcal{D}_{S^*_3} \oplus \mathcal{D}_{S^*_3} \oplus \dots = \mathcal{H} \oplus \ell^2(\mathcal{D}_{S^*_3}).$ Let $\hat{\textbf{W}} = (\hat{W}_1, \hat{W}_2, \hat{W}_3, \hat{\tilde{W}}_1,\hat{\tilde{W}}_2)$ be a $5$-tuple of operators defined on $\mathcal{\tilde{K}}^*$ by
		\begin{equation}\label{tildeWW}
			\begin{aligned}
				&\hat{W}_1 =
				\begin{bmatrix}
					S_1 & D_{S^*_3}\tilde{G}_2 & 0 & 0 & \dots\\
					0 & G^*_1 & \tilde{G}_2 & 0 & \dots\\
					0 & 0 & G^*_1 & \tilde{G}_2 & \dots\\
					0 & 0 & 0 & G^*_1 & \dots\\
					\vdots & \vdots & \vdots & \vdots & \ddots
				\end{bmatrix}, \,\,
				\hat{W}_2 =
				\begin{bmatrix}
					S_2 & 2D_{S^*_3}\tilde{G}_1 & 0 & 0 & \dots\\
					0 & 2G^*_2 & 2\tilde{G}_1 & 0 & \dots\\
					0 & 0 & 2G^*_2 & 2\tilde{G}_1 & \dots\\
					0 & 0 & 0 & 2G^*_2 & \dots\\
					\vdots & \vdots & \vdots & \vdots & \ddots
				\end{bmatrix},
				\hat{W}_3 =
				\begin{bmatrix}
					S_3 & D_{S^*_3} & 0 & 0 & \dots\\
					0 & 0 & I & 0 & \dots\\
					0 & 0 & 0 & I & \dots\\
					0 & 0 & 0 & 0 & \dots\\
					\vdots & \vdots & \vdots & \vdots & \ddots
				\end{bmatrix},\\
				&\hspace{2cm}
				\hat{\tilde{W}}_1 =
				\begin{bmatrix}
					\tilde{S}_1 & 2D_{S^*_3}G_2 & 0 & 0 & \dots\\
					0 & 2\tilde{G}^*_1 & 2G_2 & 0 & \dots\\
					0 & 0 & 2\tilde{G}^*_1 & 2G_2 & \dots\\
					0 & 0 & 0 & 2\tilde{G}^*_1 & \dots\\
					\vdots & \vdots & \vdots & \vdots & \ddots
				\end{bmatrix}, \,\,
				\hat{\tilde{W}}_2 =
				\begin{bmatrix}
					\tilde{S}_2 & D_{S^*_3}G_1 & 0 & 0 & \dots\\
					0 & \tilde{G}^*_2 & G_1 & 0 & \dots\\
					0 & 0 & \tilde{G}^*_2 & G_1 & \dots\\
					0 & 0 & 0 & \tilde{G}^*_2 & \dots\\
					\vdots & \vdots & \vdots & \vdots & \ddots
				\end{bmatrix}.
			\end{aligned}
		\end{equation}
		Then
		\begin{enumerate}
			\item $\hat{\textbf{W}}$ is a $\Gamma_{E(3; 2; 1, 2)}$-co-isometric dilation of $\textbf{S}$,  $\mathcal{H}$ is a common invariant subspace of the operators $\hat{W}_1, \hat{W}_2, \hat{W}_3, \hat{\tilde{W}}_1,\hat{\tilde{W}}_2$ and $\hat{W}_1|_{\mathcal{H}} = S_1, \hat{W}_2|_{\mathcal{H}} = S_2, \hat{W}_3|_{\mathcal{H}} = S_3, \hat{\tilde{W}}_1|_{\mathcal{H}} = \tilde{S}_1, \hat{\tilde{W}}_2|_{\mathcal{H}} = \tilde{S}_2$,
			
			\item there exists an orthogonal decomposition $\mathcal{\tilde{K}}^* = \mathcal{\tilde{K}}_1 \oplus \mathcal{\tilde{K}}_2$ of $\mathcal{\tilde{K}}^*$ into reducing subspaces such that $(\hat{W}_1|_{\mathcal{\tilde{K}}_1}, \hat{W}_2|_{\mathcal{\tilde{K}}_1}, \hat{W}_3|_{\mathcal{\tilde{K}}_1}, \hat{\tilde{W}}_1|_{\mathcal{\tilde{K}}_1}, \hat{\tilde{W}}_2|_{\mathcal{\tilde{K}}_1})$ is a $\Gamma_{E(3; 2; 1, 2)}$-unitary and $(\hat{W}_1|_{\mathcal{\tilde{K}}_2}, \hat{W}_2|_{\mathcal{\tilde{K}}_2}, \hat{W}_3|_{\mathcal{\tilde{K}}_2}, \hat{\tilde{W}}_1|_{\mathcal{\tilde{K}}_2}, \hat{\tilde{W}}_2|_{\mathcal{\tilde{K}}_2})$ is a \textit{pure} $\Gamma_{E(3; 2; 1, 2)}$-co-isometry.
			
			\item $\mathcal{\tilde{K}}_2$ can be identified as $H^2 (\mathcal{D}_{\hat{W}_3})$, where $\mathcal{D}_{\hat{W}_3}$ has same dimension as of $\mathcal{D}_{S_3}$. The operators  $\hat{W}_1|_{\mathcal{\tilde{K}}_2}, \hat{W}_2|_{\mathcal{\tilde{K}}_2},$\\$ \hat{W}_3|_{\mathcal{\tilde{K}}_2}, \hat{\tilde{W}}_1|_{\mathcal{\tilde{K}}_2}, \hat{\tilde{W}}_2|_{\mathcal{\tilde{K}}_2}$ are unitarily equivalent to the multiplication operators $M_{B_1 + \tilde{B}^*_2\overline{z}}, M_{2B_2 + 2\tilde{B}^*_1\overline{z}}, M_{\overline{z}}, $\\$M_{2\tilde{B}_1 + 2B^*_2\overline{z}}, M_{\tilde{B}_2 + B^*_1\overline{z}},$ respectively, on the vector-valued Hardy space $H^2 (\mathcal{D}_{\hat{W}_3})$, where $B_1,2B_2,2\tilde{B}_1,\tilde{B}_2$ are the fundamental operators of   $\hat{\textbf{W}} = (\hat{W}_1, \hat{W}_2, \hat{W}_3, \hat{\tilde{W}}_1,\hat{\tilde{W}}_2)$.		\end{enumerate}
	\end{thm}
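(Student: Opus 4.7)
The plan is to mirror the strategy used in the preceding functional model theorem for $\Gamma_{E(3;3;1,1,1)}$-contractions, transported to the $\Gamma_{E(3;2;1,2)}$-setting. First, I would compute the adjoints of the five operators in \eqref{tildeWW} and observe that together with the fact that $\hat{W}_3^*$ on $\mathcal{\tilde K}^*$ is precisely the Sch\"affer minimal isometric dilation of $S_3^*$, the tuple $\hat{\textbf{W}}^* = (\hat W_1^*, \hat W_2^*, \hat W_3^*, \hat{\tilde W}_1^*, \hat{\tilde W}_2^*)$ takes exactly the block lower-triangular form produced by Theorem \ref{condilation1} applied to the $\Gamma_{E(3;2;1,2)}$-contraction $\textbf{S}^* = (S_1^*, S_2^*, S_3^*, \tilde S_1^*, \tilde S_2^*)$ with fundamental operators $G_1, 2G_2, 2\tilde G_1, \tilde G_2$. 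The commutation hypotheses (i)--(ii) are precisely those required by Theorem \ref{condilation1}, so that theorem yields at once that $\hat{\textbf{W}}^*$ is a minimal $\Gamma_{E(3;2;1,2)}$-isometric dilation of $\textbf{S}^*$. Dualizing, $\hat{\textbf{W}}$ is a $\Gamma_{E(3;2;1,2)}$-co-isometric dilation of $\textbf{S}$; the joint invariance of $\mathcal H$ and the identities $\hat W_i|_{\mathcal H} = S_i$, $\hat{\tilde W}_j|_{\mathcal H} = \tilde S_j$ are visible by reading off the $(1,1)$ blocks in \eqref{tildeWW}. This establishes (1).

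For (2), since $\hat{\textbf{W}}^*$ is a $\Gamma_{E(3;2;1,2)}$-isometry, I would invoke the Wold-type decomposition for such tuples in \cite{apal2}, producing an orthogonal splitting $\mathcal{\tilde K}^* = \mathcal{\tilde K}_1 \oplus \mathcal{\tilde K}_2$ into reducing subspaces on which $\hat W_3^*|_{\mathcal{\tilde K}_1}$ is unitary and $\hat W_3^*|_{\mathcal{\tilde K}_2}$ is a pure isometry. By Theorem \ref{thm-13}(2), a $\Gamma_{E(3;2;1,2)}$-contraction whose last coordinate is unitary is automatically a $\Gamma_{E(3;2;1,2)}$-unitary; applying this to $\hat{\textbf{W}}^*|_{\mathcal{\tilde K}_1}$ and taking adjoints gives that $\hat{\textbf{W}}|_{\mathcal{\tilde K}_1}$ is a $\Gamma_{E(3;2;1,2)}$-unitary. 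Analogously, $\hat{\textbf{W}}^*|_{\mathcal{\tilde K}_2}$ is a pure $\Gamma_{E(3;2;1,2)}$-isometry, so $\hat{\textbf{W}}|_{\mathcal{\tilde K}_2}$ is a pure $\Gamma_{E(3;2;1,2)}$-co-isometry.

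For (3), I would apply the model theorem for pure $\Gamma_{E(3;2;1,2)}$-isometries, Theorem \ref{thm-15}, to $\hat{\textbf{W}}^*|_{\mathcal{\tilde K}_2}$. This yields a separable Hilbert space $\mathcal F$ and a unitary $\mathcal{\tilde K}_2 \cong H^2(\mathcal F)$ under which $\hat W_3^*|_{\mathcal{\tilde K}_2}, \hat W_1^*|_{\mathcal{\tilde K}_2}, \hat W_2^*|_{\mathcal{\tilde K}_2}, \hat{\tilde W}_1^*|_{\mathcal{\tilde K}_2}, \hat{\tilde W}_2^*|_{\mathcal{\tilde K}_2}$ become, respectively, $M_z, M_{B_1 + \tilde B_2^* z}, M_{2B_2 + 2\tilde B_1^* z}, M_{2\tilde B_1 + 2B_2^* z}, M_{\tilde B_2 + B_1^* z}$, where $B_1, 2B_2, 2\tilde B_1, \tilde B_2$ are the fundamental operators of $\hat{\textbf{W}}^*|_{\mathcal{\tilde K}_2}$. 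Because the unitary summand $\hat{\textbf{W}}^*|_{\mathcal{\tilde K}_1}$ has $\hat W_3^*|_{\mathcal{\tilde K}_1}$ unitary and hence contributes zero defect, the defect space $\mathcal D_{\hat W_3^*|_{\mathcal{\tilde K}_2}} = \mathcal F$ can be identified with $\mathcal D_{\hat W_3^*} = \mathcal D_{\hat W_3}$, and $B_1, 2B_2, 2\tilde B_1, \tilde B_2$ are then the fundamental operators of the full tuple $\hat{\textbf{W}}^*$ (equivalently of $\hat{\textbf{W}}$). Taking adjoints in the multiplier picture converts each symbol $A + A'^* z$ into $A^* + A' \bar z$ acting on $H^2(\mathcal F)^* \cong H^2(\mathcal F)$, which after relabeling produces exactly $M_{B_1 + \tilde B_2^* \bar z}, M_{2B_2 + 2\tilde B_1^* \bar z}, M_{\bar z}, M_{2\tilde B_1 + 2 B_2^* \bar z}, M_{\tilde B_2 + B_1^* \bar z}$ as claimed. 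Finally, applying Proposition \ref{vstar} to the contraction $S_3$ and its minimal isometric dilation $\hat W_3^*$ gives $\dim \mathcal D_{\hat W_3} = \dim \mathcal D_{S_3}$.

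The main obstacle, as in the companion theorem for $\Gamma_{E(3;3;1,1,1)}$, is the bookkeeping in (3): one must carefully track how the Wold splitting interacts with the defect space of $\hat W_3$, verify that the fundamental operators of the pure compression coincide (after the unitary identification) with those of $\hat{\textbf{W}}^*$, and convert adjoints of multiplication operators with symbols linear in $z$ into multiplication operators with symbols linear in $\bar z$ in the correct positions. The commutation conditions (i)--(ii) enter twice in the argument, first to invoke Theorem \ref{condilation1} for dilation and commutativity of $\hat{\textbf{W}}^*$, and second implicitly through Theorem \ref{thm-15}, since they are precisely the compatibility relations ensuring the model multiplication tuple is a genuine pure $\Gamma_{E(3;2;1,2)}$-isometry.
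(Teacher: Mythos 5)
Your proposal follows essentially the same route as the paper's proof: recognize $\hat{\textbf{W}}^*$ as the Sch\"affer-type minimal $\Gamma_{E(3;2;1,2)}$-isometric dilation of $\textbf{S}^*$ from Theorem \ref{condilation1}, dualize for part (1), apply the Wold decomposition and the characterization of $\Gamma_{E(3;2;1,2)}$-unitaries for part (2), and invoke the pure-isometry model theorem together with Proposition \ref{vstar} for part (3). The only quibble is that the fact that a $\Gamma_{E(3;2;1,2)}$-contraction with unitary last coordinate is a $\Gamma_{E(3;2;1,2)}$-unitary comes from Theorem 3.7 of \cite{apal2} rather than from Theorem \ref{thm-13}(2), which characterizes isometries, not unitaries.
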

	
	\begin{proof}
It follows from \eqref{tildeWW} that $\hat{W}^*_3$ on $\mathcal{\tilde{K}}^*$ is the minimal isometric dilation of $S^*_3$. Note that
		\begin{equation}\label{tildeWWW}
			\begin{aligned}
				&\hat{W}^*_1 =
				\begin{bmatrix}
					S^*_1 & 0 & 0 & \dots\\
					\tilde{G}^*_2D_{S^*_3} & G_1 & 0 & \dots\\
					0 & \tilde{G}^*_2 & G_1 & \dots\\
					0 & 0 & \tilde{G}^*_2 & \dots\\
					\vdots & \vdots & \vdots & \ddots
				\end{bmatrix}, \,\,
				\hat{W}^*_2 =
				\begin{bmatrix}
					S^*_2 & 0 & 0 & \dots\\
					2\tilde{G}^*_1D_{S^*_3} & 2G_2 & 0 & \dots\\
					0 & 2\tilde{G}^*_1 & 2G_2 & \dots\\
					0 & 0 & 2\tilde{G}^*_1 & \dots\\
					\vdots & \vdots & \vdots & \ddots
				\end{bmatrix},
				\hat{W}^*_3 =
				\begin{bmatrix}
					S^*_3 & 0 & 0 & \dots\\
					D_{S^*_3} & 0 & 0 & \dots\\
					0 & I & 0 & \dots\\
					0 & 0 & I & \dots\\
					\vdots & \vdots & \vdots & \ddots
				\end{bmatrix},\\
				&\hspace{2cm}
				\hat{\tilde{W}}^*_1 =
				\begin{bmatrix}
					\tilde{S}^*_1 & 0 & 0 & \dots\\
					2G^*_2D_{S^*_3} & 2\tilde{G}_1 & 0 & \dots\\
					0 & 2G^*_2 & 2\tilde{G}_1 & \dots\\
					0 & 0 & 2G^*_2 & \dots\\
					\vdots & \vdots & \vdots & \ddots
				\end{bmatrix}, \,\,
				\hat{\tilde{W}}^*_2 =
				\begin{bmatrix}
					\tilde{S}^*_2 & 0 & 0 & \dots\\
					G^*_1D_{S^*_3} & \tilde{G}_2 & 0 & \dots\\
					0 & G^*_1 & \tilde{G}_2 & \dots\\
					0 & 0 & G^*_1 & \dots\\
					\vdots & \vdots & \vdots & \ddots
				\end{bmatrix}.
			\end{aligned}
		\end{equation}
It yields from Theorem \ref{condilation1} that  $(\hat{W}^*_1, \hat{W}^*_2, \hat{W}^*_3, \hat{\tilde{W}}^*_1, \hat{\tilde{W}}^*_2)$ is a minimal $\Gamma_{E(3; 2; 1, 2)}$-isometric dilation of $\textbf{S}^* = (S^*_1, S^*_2, S^*_3, \tilde{S}^*_1, \tilde{S}^*_2)$.  Therefore,  $\hat{\textbf{W}}$ is $\Gamma_{E(3; 2; 1, 2)}$-co-isometric dilation of $\textbf{S}$. It is evident that  $\mathcal{H}$ is a common invariant subspace for $\hat{W}_1, \hat{W}_2, \hat{W}_3, \hat{\tilde{W}}_1, \hat{\tilde{W}}_2$ and  $\hat{W}_1|_{\mathcal{H}} = S_1, \hat{W}_2|_{\mathcal{H}} = S_2, \hat{W}_3|_{\mathcal{H}} = S_3, \hat{\tilde{W}}_1|_{\mathcal{H}} = \tilde{S}_2, \hat{\tilde{W}}_2|_{\mathcal{H}} = \tilde{S}_2$. As $\hat{\textbf{W}}^*$ is a $\Gamma_{E(3; 2; 1, 2)}$-isometry, it implies  from \textit{Wold decomposition theorem} that there exists an orthogonal decompositon of  $\mathcal{\tilde{K}}^* = \mathcal{\tilde{K}}_1 \oplus \mathcal{\tilde{K}}_2$ into  two reducing subspaces $\mathcal{\tilde{K}}_1$ and $\mathcal{\tilde{K}}_2$ of $\mathcal{\tilde{K}}^*$ such that $(\hat{W}_1^*|_{\mathcal{\tilde{K}}_1}, \hat{W}_2^*|_{\mathcal{\tilde{K}}_1}, \hat{W}_3^*|_{\mathcal{\tilde{K}}_1}, \hat{\tilde{W}}_1^*|_{\mathcal{\tilde{K}}_1}, \\\hat{\tilde{W}}_2^*|_{\mathcal{\tilde{K}}_1})$ is a $\Gamma_{E(3; 2; 1, 2)}$-unitary and $(\hat{W}_1^*|_{\mathcal{\tilde{K}}_2}, \hat{W}_2^*|_{\mathcal{\tilde{K}}_2}, \hat{W}_3^*|_{\mathcal{\tilde{K}}_2}, \hat{\tilde{W}}_1^*|_{\mathcal{\tilde{K}}_2}, \hat{\tilde{W}}_2^*|_{\mathcal{\tilde{K}}_2})$ is a \textit{pure} $\Gamma_{E(3; 2; 1, 2)}$-isometry. \\Since $(\hat{W}_1^*|_{\mathcal{\tilde{K}}_1}, \hat{W}_2^*|_{\mathcal{\tilde{K}}_1}, \hat{W}_3^*|_{\mathcal{\tilde{K}}_1}, \hat{\tilde{W}}_1^*|_{\mathcal{\tilde{K}}_1}, \hat{\tilde{W}}_2^*|_{\mathcal{\tilde{K}}_1})$ is a $\Gamma_{E(3; 2; 1, 2)}$-unitary, it follows from [Theorem $3.7$, \cite{apal2}] that $(\hat{W}_1^*|_{\mathcal{\tilde{K}}_1}, \hat{W}_2^*|_{\mathcal{\tilde{K}}_1},\hat{W}_3^*|_{\mathcal{\tilde{K}}_1}, \hat{\tilde{W}}_1^*|_{\mathcal{\tilde{K}}_1}, \hat{\tilde{W}}_2^*|_{\mathcal{\tilde{K}}_1})$ is a $\Gamma_{E(3; 2; 1, 2)}$-contraction and $\hat{W}_3^*|_{\mathcal{\tilde{K}}_1}$ is unitary, and hence $(\hat{W}_1|_{\mathcal{\tilde{K}}_1}, \hat{W}_2|_{\mathcal{\tilde{K}}_1}, \\\hat{W}_3|_{\mathcal{\tilde{K}}_1}, \hat{\tilde{W}}_1|_{\mathcal{\tilde{K}}_1}, \hat{\tilde{W}}_2|_{\mathcal{\tilde{K}}_1})$ is a $\Gamma_{E(3; 2; 1, 2)}$-contraction and  $\hat{W}_3|_{\mathcal{\tilde{K}}_1}$ is unitary. Thus, by [Theorem $3.7$, \cite{apal2}], we conclude that $(\hat{W}_1|_{\mathcal{\tilde{K}}_1}, \hat{W}_2|_{\mathcal{\tilde{K}}_1}, \hat{W}_3|_{\mathcal{\tilde{K}}_1}, \hat{\tilde{W}}_1|_{\mathcal{\tilde{K}}_1}, \hat{\tilde{W}}_2|_{\mathcal{\tilde{K}}_1})$ is a $\Gamma_{E(3; 2; 1, 2)}$-unitary.  Set  $\hat{W}_{i}^{(1)}:= \hat{W}_i|_{\mathcal{\tilde{K}}_1}, \hat{W}_{i}^{(2)}:= \hat{W}_i|_{\mathcal{\tilde{K}}_2}$, $\hat{\tilde{W}}^{(1)}_j:= \hat{\tilde{W}}_j|_{\mathcal{\tilde{K}}_1}$ and $\hat{\tilde{W}}^{(2)}_j:= \hat{\tilde{W}}_j|_{\mathcal{\tilde{K}}_2}$ for $1\leq i \leq 3, 1\leq j \leq 2$.
Therefore, $(\hat{W}_1, \hat{W}_2, \hat{W}_3, \hat{\tilde{W}}_1,\hat{\tilde{W}}_2)$ have the following form:
		\begin{equation}
			\begin{aligned}
				&\hat{W}_1 =
				\begin{bmatrix}
					\hat{W}_{1}^{(1)} & 0\\
					0 & \hat{W}^{(2)}_{1}
				\end{bmatrix},
				\hat{W}_2 =
				\begin{bmatrix}
					\hat{W}^{(1)}_2 & 0\\
					0 & \hat{W}^{(2)}_2
				\end{bmatrix},
				\hat{W}_3 =
				\begin{bmatrix}
					\hat{W}^{(1)} _3& 0\\
					0 & \hat{W}^{(2)}_3
				\end{bmatrix},\\
				&\hspace{2cm}\hat{\tilde{W}}_1 =
				\begin{bmatrix}
					\hat{\tilde{W}}^{(1)}_1& 0\\
					0 & \hat{\tilde{W}}^{(2)}_1
				\end{bmatrix},
				\hat{\tilde{W}}_2 =
				\begin{bmatrix}
					\hat{\tilde{W}}^{(1)}_2 & 0\\
					0 & \hat{\tilde{W}}^{(2)}_2
				\end{bmatrix}
			\end{aligned}
		\end{equation}
with respect to the decomposition $\mathcal{\tilde{K}}^* = \mathcal{\tilde{K}}_1 \oplus \mathcal{\tilde{K}}_2$. 

 Since $(\hat{W}_{1}^{(1)}, \hat{W}_{2}^{(1)}, \hat{W}_{3}^{(1)}, \hat{\tilde{W}}^{(1)}_1,\hat{\tilde{W}}^{(1)}_2$) is a $\Gamma_{E(3; 2; 1, 2)}$-unitary, it yields that $(\hat{W}_1, \hat{W}_2, \hat{W}_3, \hat{\tilde{W}}_1, \hat{\tilde{W}}_2)$ and \\ $(\hat{W}^{(1)}_2, \hat{W}^{(2)}_2, \hat{W}^{(3)}_2, \hat{\tilde{W}}^{(1)}_2, \hat{\tilde{W}}^{(2)}_2)$ have the same fundamental operators $B_1,2B_2,2\tilde{B}_1,\tilde{B}_2.$ From Theorem \ref{thm-15}, we also conclude that
		\begin{enumerate}
			\item $\mathcal{\tilde{K}}_2$ can be identified with $H^2 (\mathcal{D}_{\hat{W}_3})$;
			
			\item the operators $\hat{W}_1|_{\mathcal{\tilde{K}}_2}, \hat{W}_2|_{\mathcal{\tilde{K}}_2}, \hat{W}_3|_{\mathcal{\tilde{K}}_2}, \hat{\tilde{W}}_1|_{\mathcal{\tilde{K}}_2}, \hat{\tilde{W}}_2|_{\mathcal{\tilde{K}}_2}$ are unitarily equivalent to the multiplication operators $M_{B_1 + \tilde{B}^*_2\overline{z}}, M_{2B_2 + 2\tilde{B}^*_1\overline{z}}, M_{\overline{z}},M_{2\tilde{B}_1 + 2B^*_2\overline{z}}, M_{\tilde{B}_2 + B^*_1\overline{z}},$ respectively, on the vector-valued Hardy space $H^2 (\mathcal{D}_{\hat{W}_3})$, where $B_1,2B_2,2\tilde{B}_1,\tilde{B}_2$ are the fundamental operators of   $\hat{\textbf{W}} = (\hat{W}_1, \hat{W}_2, \hat{W}_3, \hat{\tilde{W}}_1,\hat{\tilde{W}}_2)$.
		\end{enumerate}
		Since $\hat{W}^*_3$ is a minimal isometric dilation of $S^*_3$, by Proposition \ref{vstar}, we conclude that $\mathcal{D}_{S_3}$ and $\mathcal{D}_{\hat{W}_3}$ have same dimension. This completes the proof.
	\end{proof}
	
	\section{Canonical Decomposition of $\Gamma_{E(3; 3; 1, 1, 1)}$-contraction and $\Gamma_{E(3; 2; 1, 2)}$-contraction}
	
We recall the definition of completely non-unitary contraction from \cite{Nagy}. A contraction $T$  on a Hilbert space $\mathcal H$  is said to be  completely non-unitary (c.n.u.) contractions if there exists no nontrivial reducing subspace $\mathcal L$  for $T$ such that $T |_{\mathcal L}$ is a unitary operator. This section presents the canonical decomposition of the $\Gamma_{E(3; 3; 1, 1, 1)}$-contraction and the $\Gamma_{E(3; 2; 1, 2)}$-contraction.  Any contraction $T$ on a Hilbert space $\mathcal{H}$ can be expressed as the orthogonal direct sum of a unitary and a completely non-unitary contraction. The details can be found in [Theorem 3.2, \cite{Nagy}]. We start with the following definition, which will be essential for the canonical decomposition of the $\Gamma_{E(3; 3; 1, 1, 1)}$-contraction and the $\Gamma_{E(3; 2; 1, 2)}$-contraction.	
	\begin{defn}
		\begin{enumerate}
			\item A $\Gamma_{E(3; 3; 1, 1, 1)}$-contraction $\textbf{T} = (T_1, \dots, T_7)$ is said to be completely non-unitary  {$\Gamma_{E(3; 3; 1, 1, 1)}$-contraction} if $T_7$ is a completely non-unitary contraction.
			
			\item A $\Gamma_{E(3; 2; 1, 2)}$-contraction $\textbf{S} = (S_1, S_2, S_3, \tilde{S}_1, \tilde{S}_2)$ is said to be completely non-unitary {$\Gamma_{E(3; 2; 1, 2)}$-contraction} if $S_3$ is a completely non-unitary contraction.		\end{enumerate}
	\end{defn}
We only state the following lemma from [Proposition $1.3.2$, \cite{Bhatia}].
\begin{lem}\label{bhatia}
Let $M=\begin{pmatrix}
A& X\\
X^* & B
\end{pmatrix}$ with $A\geq 0$ and $B\geq 0.$ Then $M\geq 0$ if and only if $X=A^{\frac{1}{2}}KB^{\frac{1}{2}}$ for some contraction $K.$
\end{lem}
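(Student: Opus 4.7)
The plan is to prove the two implications separately. For the sufficiency $(\Leftarrow)$, if $X = A^{1/2} K B^{1/2}$ with $\|K\|\leq 1$, I would factor
\begin{equation*}
M = \begin{pmatrix} A^{1/2} & 0 \\ 0 & B^{1/2} \end{pmatrix} \begin{pmatrix} I & K \\ K^* & I \end{pmatrix} \begin{pmatrix} A^{1/2} & 0 \\ 0 & B^{1/2} \end{pmatrix}.
\end{equation*}
The middle factor is positive precisely because $\|K\|\leq 1$ (a direct quadratic-form computation, or equivalently a $2\times 2$ Schur-complement observation), and $M$ is then a congruence of a positive operator by a self-adjoint factor, hence positive.

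For the necessity $(\Rightarrow)$, assuming $M\geq 0$, I would first extract the scalar inequality $|\langle Xy, x\rangle|^2 \leq \langle Ax, x\rangle\,\langle By, y\rangle$ for all $x,y$. One gets this by choosing a phase $e^{i\theta}$ that makes $e^{-i\theta}\langle Xy, x\rangle$ real, evaluating the quadratic form of $M$ at the vector with components $t e^{i\theta} x$ and $y$ for real $t$, and invoking the nonnegative-discriminant condition of the resulting quadratic in $t$. Rewriting as $|\langle Xy, x\rangle| \leq \|A^{1/2} x\|\cdot \|B^{1/2} y\|$ shows that the sesquilinear form $(A^{1/2} x, B^{1/2} y) \mapsto \langle Xy, x\rangle$ is well defined and contractive on $\overline{\mathrm{Ran}(A^{1/2})} \times \overline{\mathrm{Ran}(B^{1/2})}$. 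The Riesz representation theorem then produces a contraction $K_0 : \overline{\mathrm{Ran}(B^{1/2})} \to \overline{\mathrm{Ran}(A^{1/2})}$ satisfying $\langle Xy, x\rangle = \langle A^{1/2} K_0 B^{1/2} y, x\rangle$, and extending $K_0$ by zero on the orthogonal complement gives the required contraction $K$.

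The main obstacle, such as it is, lies in the well-definedness step: one must check that if $A^{1/2} x_1 = A^{1/2} x_2$ then $\langle Xy, x_1\rangle = \langle Xy, x_2\rangle$ (and symmetrically for $B^{1/2}$), so that the form descends to vectors in the ranges. This is immediate from the derived inequality applied to $x_1-x_2$, but it is essentially a Douglas-lemma consideration and is the only step that requires any genuine care. Everything else reduces to routine manipulation of $2\times 2$ block operators.
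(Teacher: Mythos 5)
Your argument is correct, but there is nothing in the paper to compare it against: the authors explicitly state this lemma without proof, citing it as Proposition 1.3.2 of Bhatia's \emph{Positive Definite Matrices}. Your proposal is the standard self-contained proof of that result. The sufficiency direction via the congruence
$M=\mathrm{diag}(A^{1/2},B^{1/2})\left(\begin{smallmatrix} I & K\\ K^* & I\end{smallmatrix}\right)\mathrm{diag}(A^{1/2},B^{1/2})$
is exactly right, and the necessity direction via the Cauchy--Schwarz-type bound $|\langle Xy,x\rangle|^2\leq\langle Ax,x\rangle\langle By,y\rangle$ followed by the Riesz/Douglas factorization is the usual route. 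Two small points worth making explicit if you write this up: (i) when $\langle Ax,x\rangle=0$ the quadratic in $t$ degenerates to an affine function, and the inequality for all real $t$ forces $\langle Xy,x\rangle=0$ directly rather than via the discriminant; this also shows $\langle Xy,x\rangle=0$ for $x\in\ker A^{1/2}$, which is what lets you conclude $Xy=A^{1/2}K_0B^{1/2}y$ as an identity of vectors and not merely of inner products against $x\in\overline{\Ran}(A^{1/2})$; (ii) after extending $K_0$ by zero on $\bigl(\overline{\Ran}(B^{1/2})\bigr)^{\perp}$ you should note $\|K\|=\|K_0\|\leq 1$, which is immediate since the extension is by an orthogonal direct sum. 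With those remarks your proof is complete and supplies a detail the paper delegates entirely to the reference.
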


We will now demonstrate that for each $\Gamma_{E(3; 3; 1, 1, 1)}$-contraction $\textbf{T} = (T_1, \dots, T_7),$ if we decompose $T_7$ into the same way as described before, then $T_i,1\leq i \leq 6,$ decomposes into the direct sum of operators on the same subspaces.

\begin{thm}[Canonical Decomposition for $\Gamma_{E(3; 3; 1, 1, 1)}$-Contraction]\label{thm-16}

		Let $\textbf{T} = (T_1, \dots, T_7)$ be a $\Gamma_{E(3; 3; 1, 1, 1)} $-contraction on a Hilbert space $\mathcal{H},$ and $\mathcal{H}_1$ be the maximal subspace of $\mathcal{H}$ that reduces $T_7$ and on which $T_7$ is unitary. Let $\mathcal{H}_2 = \mathcal{H} \ominus \mathcal{H}_1$. Then
\begin{enumerate}
\item $\mathcal{H}_1, \mathcal{H}_2$ are reducing subspaces of $T_1, \dots, T_6$;
\item $(T_1|_{\mathcal{H}_1}, \dots, T_6|_{\mathcal{H}_1}, T_7|_{\mathcal{H}_1}) $ is a $\Gamma_{E(3; 3; 1, 1, 1)} $-unitary and $(T_1|_{\mathcal{H}_2}, \dots, T_6|_{\mathcal{H}_2}, T_7|_{\mathcal{H}_2})$ is a completely non-unitary $\Gamma_{E(3; 3; 1, 1, 1)} $-contraction;
\item the subspaces $\mathcal{H}_1$ or $\mathcal{H}_2$ may be equal to $\{0\}$, the trivial subspace.

\end{enumerate}

\end{thm}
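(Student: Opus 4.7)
The plan is to leverage the canonical decomposition for tetrablock contractions from \cite{Bhattacharyya} and transport it through the slicing of a $\Gamma_{E(3;3;1,1,1)}$-contraction into three tetrablock contractions. By Theorem \ref{fundam}, each triple $(T_i, T_{7-i}, T_7)$ for $1 \leq i \leq 3$ is a $\Gamma_{E(2;2;1,1)}$-contraction. Applying the tetrablock canonical decomposition individually to each of these triples shows that the maximal subspace reducing $T_7$ on which $T_7$ acts unitarily -- a subspace intrinsically attached to $T_7$ alone -- reduces both $T_i$ and $T_{7-i}$. Because this subspace $\mathcal{H}_1$ coincides across all three applications, it simultaneously reduces $T_1,\dots,T_6$, which establishes (1). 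The restriction $\mathbf{T}|_{\mathcal{H}_2}$ is automatically a $\Gamma_{E(3;3;1,1,1)}$-contraction, since spectral set membership passes to reducing subspaces, and $T_7|_{\mathcal{H}_2}$ is completely non-unitary by the maximality of $\mathcal{H}_1$; hence $\mathbf{T}|_{\mathcal{H}_2}$ is a completely non-unitary $\Gamma_{E(3;3;1,1,1)}$-contraction by definition.

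For the unitary part of (2), I would verify that $\mathbf{T}|_{\mathcal{H}_1}$ is a $\Gamma_{E(3;3;1,1,1)}$-unitary. The tetrablock decomposition further tells us that each restricted triple $(T_i|_{\mathcal{H}_1}, T_{7-i}|_{\mathcal{H}_1}, T_7|_{\mathcal{H}_1})$ is a tetrablock unitary, so each $T_i|_{\mathcal{H}_1}$ is normal, $T_7|_{\mathcal{H}_1}$ is unitary, and
\begin{equation*}
T_i|_{\mathcal{H}_1} = (T_{7-i}|_{\mathcal{H}_1})^{*}\, T_7|_{\mathcal{H}_1}, \quad 1 \leq i \leq 3.
\end{equation*}
The seven restrictions thus form a commuting family of normal operators generating an abelian $C^{*}$-algebra, and evaluating a character at the above operator identities shows that any $(\lambda_1,\dots,\lambda_7)$ in the Taylor joint spectrum satisfies $\lambda_1 = \bar{\lambda}_6 \lambda_7$, $\lambda_3 = \bar{\lambda}_4 \lambda_7$, and, using $|\lambda_7| = 1$, $\lambda_5 = \bar{\lambda}_2 \lambda_7$. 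Combined with the containment in $\Gamma_{E(3;3;1,1,1)}$ inherited from $\mathbf{T}$, this yields $\sigma(\mathbf{T}|_{\mathcal{H}_1}) \subseteq K$, which is the defining property of a $\Gamma_{E(3;3;1,1,1)}$-unitary. Item (3) is immediate from the extreme cases where $T_7$ is itself either unitary or completely non-unitary.

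The step I expect to be most delicate is the passage from operator identities to the pointwise spectral relations defining $K$. In the commuting normal setting this is ultimately a consequence of the Gelfand representation together with the identification of the Taylor spectrum with the character space of the generated abelian $C^{*}$-algebra, but the argument requires one to invoke the full strength of the tetrablock canonical decomposition of \cite{Bhattacharyya} -- in particular the normality of each $T_i|_{\mathcal{H}_1}$ -- rather than merely the algebraic identity $T_i|_{\mathcal{H}_1} = (T_{7-i}|_{\mathcal{H}_1})^{*} T_7|_{\mathcal{H}_1}$, since normality is essential for the character-evaluation step to produce genuine spectral identities.
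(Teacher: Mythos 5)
Your proposal is correct, but it takes a genuinely different and more modular route than the paper. The paper does not invoke the tetrablock canonical decomposition as a black box; it reproves the whole thing inline for the $7$-tuple: it writes each $T_i$ as a $2\times 2$ block operator with respect to $\mathcal{H}_1\oplus\mathcal{H}_2$, adds the two rotated positivity conditions $\rho_{G_{E(2;2;1,1)}}(z_1T_i,z_1T_{7-i},z_1^2T_7)\geq 0$ and $\rho_{G_{E(2;2;1,1)}}(z_2T_{7-i},z_2T_i,z_2^2T_7)\geq 0$ to produce a positive block operator $M_i$ as in \eqref{eqa99}, varies $z_1,z_2\in\mathbb{T}$ to extract the corner identity $T_{11}^{(i)}={T_{11}^{(7-i)}}^*T_1^{(7)}$, kills the off-diagonal entries of $M_i$ via Lemma \ref{bhatia}, and then iterates the resulting intertwining relations against the completely non-unitary part $T_2^{(7)}$ to force $T_{12}^{(i)}=T_{21}^{(i)}=0$. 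Your argument outsources exactly these steps to the canonical decomposition of a tetrablock contraction applied to the three triples $(T_i,T_{7-i},T_7)$, using that the subspace $\mathcal{H}_1$ depends on $T_7$ alone; this is legitimate, but note that the canonical decomposition of tetrablock contractions is due to S.~Pal \cite{Pal}, not \cite{Bhattacharyya}, so your citation should be corrected (the result is in the paper's bibliography). For the identification of the unitary part the two proofs also diverge: the paper applies its own characterization of $\Gamma_{E(3;3;1,1,1)}$-unitaries [Theorem 3.2, \cite{apal2}] directly from the identities $T_{11}^{(i)}={T_{11}^{(7-i)}}^*T_1^{(7)}$ together with $\|T_{11}^{(i)}\|\leq 1$ and the unitarity of $T_1^{(7)}$, whereas you rerun the Gelfand/character computation of the Taylor spectrum from the normality supplied by the tetrablock-unitary structure; both are valid, and you could shorten your argument by citing [Theorem 3.2, \cite{apal2}] in place of the character evaluation. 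What your approach buys is brevity and reuse of an existing theorem; what the paper's approach buys is self-containedness and a computation that transfers verbatim to the $\Gamma_{E(3;2;1,2)}$ case in Theorem \ref{thm-17}.
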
	
	\begin{proof}
Let $\textbf{T} = (T_1, \dots, T_7)$ be a $\Gamma_{E(3; 3; 1, 1, 1)}$-contraction. It is evident that if $T_7$ is a completely non-unitary contraction, then $\mathcal H_1 = \{0\}$. If $T_7$ is unitary, then $\mathcal H = \mathcal H_1$, which implies that $\mathcal H_2 = \{0\}$. In either of these cases, the theorem follows easily. Let us consider the case where $T_7$ is neither a unitary operator nor a completely non-unitary contraction. Let
\begin{equation}\label{matrixr123}
\begin{aligned}
T_i &= \begin{pmatrix}
T_{11}^{(i)} & T_{12}^{(i)}\\
T_{21}^{(i)} & T_{22}^{(i)}
\end{pmatrix},1\leq i \leq 6~{\rm{and}},~~	
T_7 &= \begin{pmatrix}
T_{1} ^{(7)}& 0\\
0 & T_{2}^{(7)}
\end{pmatrix},
\end{aligned}
\end{equation}
with respect to the decomposition $\mathcal{H} = \mathcal{H}_1 \oplus \mathcal{H}_2$. Here, $T_{1} ^{(7)}$ is a unitary and $T_{2}^{(7)}$ is a completely non-unitary. Since  $T_{2}^{(7)}$ is completely non-unitary, it implies that if $x\in \mathcal H_2$ and 
\begin{equation}
			\begin{aligned}
				\Big|\Big|(T^{(7)}_2)^{*n}x\Big|\Big| &= ||x|| = \Big|\Big|(T^{(7)}_2)^{n}x\Big|\Big|,~n=1,2,\dots
			\end{aligned}
		\end{equation}
then it must be that $x = 0$. Because $T_iT_7=T_7T_i$ for $1\leq i \leq 6,$ we have 
		\begin{equation}\label{result23}
			\begin{aligned}
				&T^{(i)}_{11}T_{1}^{(7)} = T_{1}^{(7)}T^{(i)}_{11}, \hspace{0.5cm}
				T_{12}^{(i)}T_{2}^{(7)} = T_{1}^{(7)}T_{12}^{(i)},\\
				&T_{21}^{(i)}T_{1}^{(7)} = T_{2}^{(7)}T_{21}^{(i)}, \hspace{0.5cm}
				T_{22}^{(i)}T_{2}^{(7)} = T_{2}^{(7)}T_{22}^{(i)}.
			\end{aligned}
		\end{equation}	
As $\textbf{T} = (T_1, \dots, T_7)$ is a $\Gamma_{E(3; 3; 1, 1, 1)}$-contraction, it follows from Theorem \ref{fundam} that for $1\leq i \leq 6$, the following holds:	\begin{equation}
\begin{aligned}
\rho_{G_{E(2; 2; 1, 1)}}(z_1T_i, z_1T_{7-i}, z^2_1T_7) \geqslant 0 \,\, \text{and} \,\, \rho_{G_{E(2; 2; 1, 1)}}(z_2T_{7-i}, z_2T_i, z^2_2T_7) \geqslant 0 \,\, \text{for all} \,\, z_1,z_2 \in \mathbb{T}.
\end{aligned}
\end{equation}
Adding $\rho_{G_{E(2; 2; 1, 1)}}(z_1T_i, z_1T_{7-i}, z^2_1T_7)$ and   $\rho_{G_{E(2; 2; 1, 1)}}(z_2T_{7-i}, z_2T_i, z^2_2T_7)$ for $1\leq i \leq 6,$ we get
		\begin{equation}\label{eqa99}
			\begin{aligned}
				(I - T^*_7T_7) - \operatorname{Re} {z_1}(T_{7-i} - T^*_iT_7)- \operatorname{Re} z_2 (T_i - T^*_{7-i}T_7) &\geqslant 0.
			\end{aligned}
		\end{equation}
	Set $M_i:=(I - T^*_7T_7) - \operatorname{Re} {z_1}(T_{7-i} - T^*_iT_7)- \operatorname{Re} z_2 (T_i - T^*_{7-i}T_7)$ for $1\leq i\leq 6$.	
From \eqref{matrixr123} and \eqref{eqa99}, we have 
		\begin{equation}\label{matrixrr}
			\begin{aligned}
				0 &\leq M_i\\
				&=\begin{pmatrix}
					0 & 0\\
					0 & I - {T^{(7)}_{2}}^*T^{(7)}_{2}
				\end{pmatrix}
				-
				\operatorname{Re} z_1
				\begin{pmatrix}
					T_{11}^{(7-i)} - {T_{11}^{(i)}}^*T_{1} ^{(7)}& T_{12}^{(7-i)} - {T_{21}^{(i)}}^*T_{2} ^{(7)}\\
					T_{21}^{(7-i)} - {T_{12}^{(i)}}^*T_{1} ^{(7)}& T_{22}^{(7-i)} - {T_{22}^{(i)}}^*T_{2} ^{(7)}
				\end{pmatrix}\\
				&- \operatorname{Re} z_2
				\begin{pmatrix}
					T_{11}^{(i)} - {T_{11}^{(7-i)}}^*T_{1} ^{(7)}& T_{12}^{(i)} - {T_{21}^{(7-i)}}^*T_{2} ^{(7)}\\
					T_{21}^{(i)} - {T_{12}^{(7-i)}}^*T_{1} ^{(7)}& T_{22}^{(i)} - {T_{22}^{(7-i)}}^*T_{2} ^{(7)} 
					\end{pmatrix}.\\
			\end{aligned}
		\end{equation}
From \eqref{matrixrr}, the $(1,1)$ entry gives that 	
\begin{equation}\label{result}
\begin{aligned}
\operatorname{Re} z_1(T_{11}^{(7-i)} - {T_{11}^{(i)}}^*T_{1} ^{(7)})+\operatorname{Re} z_2(T_{11}^{(i)} - {T_{11}^{(7-i)}}^*T_{1} ^{(7)})\leq 0, 1\leq i \leq 6,
\end{aligned}
\end{equation} for all $z_1,z_2\in \mathbb T$.
Putting $z_1=\pm 1$ in \eqref{result}, we get 		
\begin{equation}\label{result1}
\begin{aligned}
\operatorname{Re} (T_{11}^{(7-i)} - {T_{11}^{(i)}}^*T_{1} ^{(7)})+\operatorname{Re} z_2(T_{11}^{(i)} - {T_{11}^{(7-i)}}^*T_{1} ^{(7)})\leq 0,1\leq i \leq 6,
\end{aligned}
\end{equation}
$${\rm{and }}$$		
\begin{equation}\label{result2}
\begin{aligned}
-\operatorname{Re} (T_{11}^{(7-i)} - {T_{11}^{(i)}}^*T_{1} ^{(7)})+\operatorname{Re} z_2(T_{11}^{(i)} - {T_{11}^{(7-i)}}^*T_{1} ^{(7)})\leq 0, 1\leq i \leq 6,
\end{aligned}
\end{equation}	for all $z_2\in \mathbb T.$	
From \eqref{result1} and \eqref{result2}, we have 		
\begin{equation}
\operatorname{Re} z_2(T_{11}^{(i)} - {T_{11}^{(7-i)}}^*T_{1} ^{(7)})\leq 0, 1\leq i \leq 6,
\end{equation}
for all $z_2\in \mathbb T,$ and hence we deduce that $T_{11}^{(i)} ={T_{11}^{(7-i)}}^*T_{1} ^{(7)}$ for $1\leq i \leq 6.$ Similarly we can also show that $T_{11}^{(7-i)} ={T_{11}^{(i)}}^*T_{1} ^{(7)}$ for $1\leq i \leq 6.$ As $T_{1} ^{(7)}$ is unitary and $\| T_{11}^{(i)}\|\leq 1$ for $1\leq i \leq 6,$ it follows from [Theorem $3.2$, \cite{apal2}] that $(T_{11}^{(1)}, \dots, T_{11}^{(6)}, T_{1} ^{(7)}) $ is a $\Gamma_{E(3; 3; 1, 1, 1)} $-unitary.
		
The matrix \( M_i \) is positive semi-definite, so it can be expressed as $M_i=\begin{pmatrix} A_i& X_i\\ X_i^* & B_i \end{pmatrix}$ for $1\leq i\leq 6$. Since $T_{11}^{(i)} ={T_{11}^{(7-i)}}^*T_{1} ^{(7)}$,  we deduce that $A_i=0$ for $1\leq i \leq 6$. It yields from Lemma \ref{bhatia} that $X_i=0.$
As $X_i=X_i^*=0,$ we derive  the following equation from \eqref{matrixrr}
\begin{equation}\label{result11}
\begin{aligned}
\operatorname{Re} z_1(T_{12}^{(7-i)} - {T_{21}^{(i)}}^*T_{2} ^{(7)})+\operatorname{Re} z_2(T_{12}^{(i)} - {T_{21}^{(7-i)}}^*T_{2} ^{(7)})=0
\end{aligned}
\end{equation}
$${\rm{and}}$$ 
\begin{equation}\label{result12}
\begin{aligned}
\operatorname{Re} z_1(T_{21}^{(7-i)} - {T_{12}^{(i)}}^*T_{1} ^{(7)})+\operatorname{Re} z_2(T_{21}^{(i)} - {T_{12}^{(7-i)}}^*T_{1} ^{(7)})=0
\end{aligned}
\end{equation}		
for all $z_1,z_2\in \mathbb T$ and for $1\leq i \leq 6$. By using similar argument as above, from \eqref{result11} and \eqref{result12}, we have for $1\leq i \leq 6,$
\begin{equation}\label{result22}
\begin{aligned}
T_{12}^{(7-i)} = {T_{21}^{(i)}}^*T_{2} ^{(7)}, T_{12}^{(i)} ={T_{21}^{(7-i)}}^*T_{2} ^{(7)}, T_{21}^{(7-i)} = {T_{12}^{(i)}}^*T_{1} ^{(7)} ~{\rm{and}}~T_{21}^{(i)} = {T_{12}^{(7-i)}}^*T_{1} ^{(7)}.
\end{aligned}
\end{equation}		
For $1\leq i \leq 6,$ it follows from \eqref{result23} and \eqref{result22} that 
\begin{equation}\label{t1}
\begin{aligned}
{T_{12}^{(7-i)}}^*(T_{1} ^{(7)})^2&=T_{21}^{(i)}T_{1} ^{(7)}\\&=T_{2} ^{(7)}T_{21}^{(i)}\\&=T_{2} ^{(7)} {T_{12}^{(7-i)}}^*T_{1} ^{(7)}
\end{aligned}
\end{equation}		
and hence 
\begin{equation}\label{t2}
{T_{12}^{(7-i)}}^*T_{1} ^{(7)}=T_{2} ^{(7)} {T_{12}^{(7-i)}}^*~{\rm{for}}~1\leq i \leq 6.
\end{equation}
By iterating the equations in \eqref{result23} and \eqref{t2}	 we deduce that, for any $n\geq 1,$
\begin{equation}\label{t22}
(T_{1} ^{(7)})^n{T_{12}^{(i)}}={T_{12}^{(i)}}(T_{2} ^{(7)} )^n ~{\rm{and}}~(T_{1} ^{(7)*})^n{T_{12}^{(7-i)}}={T_{12}^{(7-i)}}(T_{2} ^{(7)*} )^n~{\rm{for}}~1\leq i \leq 6.
\end{equation}
For $1\leq i \leq 6$, it follows from \eqref{t22} that  
\begin{equation}\label{t122}
\begin{aligned}
{T_{12}^{(i)}}(T_{2} ^{(7)} )^n(T_{2} ^{(7)*})^n&=(T_{1} ^{(7)})^n{T_{12}^{(i)}}(T_{2} ^{(7)*})^n\\&=(T_{1} ^{(7)})^n(T_{1} ^{(7)*})^n{T_{12}^{(i)}}\\&={T_{12}^{(i)}}.
\end{aligned}
\end{equation}		
Similarly, we can also show that 
\begin{equation}\label{1222}{T_{12}^{(i)}}(T_{2} ^{(7)*})^n(T_{2} ^{(7)} )^n={T_{12}^{(i)}} ~{\rm{for}}~ 1\leq i \leq 6.
\end{equation}
For  $x\in \mathcal H_1$ and  $n\geq 1,$ it follows from \eqref{t122} and \eqref{1222} that 	
\begin{equation}
\begin{aligned}
\Big|\Big|(T_{2} ^{(7)*} )^n{T_{12}^{(i)*}}x\Big|\Big|=\Big|\Big|{T_{12}^{(i)*}}x\Big|\Big|=\Big|\Big|(T_{2} ^{(7)} )^n{T_{12}^{(i)*}}x\Big|\Big|~{\rm{for}}~1\leq i \leq 6.
\end{aligned}
\end{equation}		
Since $T_{2}^{(7)}$ is completely non-unitary, we get ${T_{12}^{(i)*}}x=0,$ and so ${T_{12}^{(i)}}=0$ for $1\leq i \leq 6.$ Similarly, we can also prove that ${T_{21}^{(i)}}=0$ for $1\leq i \leq 6.$ Thus, $T_i$ has the following form \begin{equation}\label{matrixr}
\begin{aligned}
T_i &= \begin{pmatrix}
T_{11}^{(i)} & 0\\
0 & T_{22}^{(i)}
\end{pmatrix},1\leq i \leq 6,\end{aligned}
\end{equation}
with respect to the decomposition $\mathcal{H} = \mathcal{H}_1 \oplus \mathcal{H}_2$. Consequently, we conclude that $\mathcal{H}_1, \mathcal{H}_2$ are reducing subspaces of $T_1, \dots, T_6$. Moreover, $(T_{22}^{(1)}, T_{22}^{(2)},T_{22}^{(3)},T_{22}^{(4)},T_{22}^{(5)}, T_{22}^{(6)}, T_{2}^{(7)}) $, being a restriction of a  $\Gamma_{E(3; 3; 1, 1, 1)} $-contraction $\textbf{T} = (T_1, \dots, T_7)$ to the joint invariant subspace $\mathcal{H}_2,$  is a $\Gamma_{E(3; 3; 1, 1, 1)}$-contraction. As $T_{2}^{(7)}$ is a completely non-unitary contraction, we conclude that $(T_1|_{\mathcal{H}_2}, \dots, T_6|_{\mathcal{H}_2}, T_7|_{\mathcal{H}_2})$ is a completely non-unitary $\Gamma_{E(3; 3; 1, 1, 1)} $-contraction. We have already shown that $(T_1|_{\mathcal{H}_1}, \dots, T_6|_{\mathcal{H}_1}, T_7|_{\mathcal{H}_1})$ is a $\Gamma_{E(3; 3; 1, 1, 1)} $-unitary. This completes the proof.
	\end{proof}
We will now describe that for any $\Gamma_{E(3; 2; 1, 2)} $-contraction $\textbf{S} = (S_1, S_2, S_3, \tilde{S}_1, \tilde{S}_2)$, if we decompose $S_3$ as previously mentioned in this section, then $S_1, S_2, \tilde{S}_1$, and $\tilde{S}_2$ decompose into the direct sum of operators on the same subspaces.
	
	\begin{thm}[Canonical Decomposition of $\Gamma_{E(3; 2; 1, 2)}$-Contraction]\label{thm-17}
		Let $\textbf{S} = (S_1, S_2, S_3, \tilde{S}_1, \tilde{S}_2)$ be a $\Gamma_{E(3; 2; 1, 2)}$-contraction on a Hilbert space $\mathcal{H}$, and $\mathcal{H}_1$ be the maximal subspace of $\mathcal{H}$ that reduces $S_3$ and on which $S_3$ is unitary. Let $\mathcal{H}_2 = \mathcal{H} \ominus \mathcal{H}_1$. Then
		\begin{enumerate}
			\item $\mathcal{H}_1, \mathcal{H}_2$ are reducing subspaces of $S_1, S_2, \tilde{S}_1, \tilde{S}_2$;
			
			\item $(S_1|_{\mathcal{H}_1}, S_2|_{\mathcal{H}_1}, S_3|_{\mathcal{H}_1}, \tilde{S}_1|_{\mathcal{H}_1}, \tilde{S}_2|_{\mathcal{H}_1})$ is $\Gamma_{E(3; 2; 1, 2)}$-unitary and $(S_1|_{\mathcal{H}_2}, S_2|_{\mathcal{H}_2}, S_3|_{\mathcal{H}_2}, \tilde{S}_1|_{\mathcal{H}_2}, \tilde{S}_2|_{\mathcal{H}_2})$ is  a completely non-unitary $\Gamma_{E(3; 2; 1, 2)}$-contraction;
			
			\item the subspaces $\mathcal{H}_1$ or $\mathcal{H}_2$ may be equal to $\{0\}$, the trivial subspace.
		\end{enumerate}
	\end{thm}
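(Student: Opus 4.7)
The plan is to mirror the strategy of Theorem \ref{thm-16} but now carried out twice in parallel, once for the triple $(S_1,\tilde{S}_2,S_3)$ and once for $(S_2/2,\tilde{S}_1/2,S_3)$, since by Theorem \ref{s1s2} both are $\Gamma_{E(2;2;1,1)}$-contractions. First, I would dispose of the trivial cases: if $S_3$ is unitary then $\mathcal{H}_1=\mathcal{H}$, and if $S_3$ is completely non-unitary then $\mathcal{H}_1=\{0\}$; in either situation the conclusions are immediate. So assume $S_3$ is neither, and write
\[
S_3=\begin{pmatrix} S_3^{(1)} & 0\\ 0 & S_3^{(2)}\end{pmatrix}
\]
with $S_3^{(1)}$ unitary on $\mathcal{H}_1$ and $S_3^{(2)}$ completely non-unitary on $\mathcal{H}_2$. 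Each of $S_1,\tilde{S}_2,S_2,\tilde{S}_1$ commutes with $S_3$, so a routine computation yields block forms
\[
S_1=\begin{pmatrix} S_{11}^{(1)} & S_{12}^{(1)}\\ S_{21}^{(1)} & S_{22}^{(1)}\end{pmatrix},\quad
\tilde{S}_2=\begin{pmatrix} \tilde{S}_{11}^{(2)} & \tilde{S}_{12}^{(2)}\\ \tilde{S}_{21}^{(2)} & \tilde{S}_{22}^{(2)}\end{pmatrix},
\]
with analogous forms for $S_2,\tilde{S}_1$, each satisfying the four scalar intertwining identities of type $S_{12}S_3^{(2)}=S_3^{(1)}S_{12}$ and $S_{21}S_3^{(1)}=S_3^{(2)}S_{21}$.

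Next, I would use the positivity $\rho_{G_{E(2;2;1,1)}}(S_1,z\tilde{S}_2,zS_3)\geq 0$ together with its twin $\rho_{G_{E(2;2;1,1)}}(\tilde{S}_2,zS_1,zS_3)\geq 0$ from Theorem \ref{s1s2}, and sum them to obtain, for all $z_1,z_2\in\mathbb{T}$,
\[
(I-S_3^*S_3)-\operatorname{Re}z_1(\tilde{S}_2-S_1^*S_3)-\operatorname{Re}z_2(S_1-\tilde{S}_2^*S_3)\geq 0.
\]
Plugging in the block decomposition, the $(1,1)$-entry on $\mathcal{H}_1$ reduces to a real linear expression in $\operatorname{Re}z_1,\operatorname{Re}z_2$ that must be non-positive for all choices; by separately taking $z_1=\pm 1$ and varying $z_2$, this forces
\[
S_{11}^{(1)}=(\tilde{S}_{11}^{(2)})^{*}S_3^{(1)},\qquad \tilde{S}_{11}^{(2)}=(S_{11}^{(1)})^{*}S_3^{(1)},
\]
so $(S_{11}^{(1)},\tilde{S}_{11}^{(2)},S_3^{(1)})$ is a $\Gamma_{E(2;2;1,1)}$-unitary by the analogue of [Theorem 3.2, \cite{apal2}]. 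Since the $(1,1)$ block of the positive matrix is now zero, Lemma \ref{bhatia} forces the entire first row and column of off-diagonal entries to vanish, yielding relations of the form $S_{12}^{(1)}=(\tilde{S}_{21}^{(2)})^{*}S_3^{(2)}$ and $S_{21}^{(1)}=(\tilde{S}_{12}^{(2)})^{*}S_3^{(1)}$ (and the symmetric versions).

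The decisive step is then to combine these relations with the commutation identities to derive $(S_3^{(1)})^n S_{12}^{(1)}=S_{12}^{(1)}(S_3^{(2)})^n$ and $(S_3^{(1)*})^n S_{12}^{(1)}=S_{12}^{(1)}(S_3^{(2)*})^n$ for every $n\geq 1$. For $x\in\mathcal{H}_1$ these give $\|(S_3^{(2)})^n (S_{12}^{(1)})^{*}x\|=\|(S_3^{(2)*})^n(S_{12}^{(1)})^{*}x\|=\|(S_{12}^{(1)})^{*}x\|$, and the complete non-unitarity of $S_3^{(2)}$ forces $(S_{12}^{(1)})^{*}x=0$, so $S_{12}^{(1)}=0$; similarly $S_{21}^{(1)}=0$. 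Running the same argument verbatim with $(S_2/2,\tilde{S}_1/2,S_3)$ in place of $(S_1,\tilde{S}_2,S_3)$ — which is legitimate precisely because Theorem \ref{s1s2} supplies both positivity conditions — produces the corresponding zeros for $S_2$ and $\tilde{S}_1$. Thus all four operators are block-diagonal with respect to $\mathcal{H}_1\oplus\mathcal{H}_2$, so $\mathcal{H}_1$ and $\mathcal{H}_2$ are reducing. Finally, the restriction to $\mathcal{H}_2$ is a $\Gamma_{E(3;2;1,2)}$-contraction with $S_3|_{\mathcal{H}_2}=S_3^{(2)}$ completely non-unitary, hence is a c.n.u.\ $\Gamma_{E(3;2;1,2)}$-contraction; and the restriction to $\mathcal{H}_1$ is a $\Gamma_{E(3;2;1,2)}$-contraction whose last coordinate is unitary, which by [Theorem 3.7, \cite{apal2}] is a $\Gamma_{E(3;2;1,2)}$-unitary.

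The main obstacle I anticipate is bookkeeping: the positivity inequality from $\rho_{G_{E(2;2;1,1)}}$ involves cross terms, so when expanding in blocks one must carefully isolate which combinations of $\operatorname{Re}z_1$ and $\operatorname{Re}z_2$ force each coefficient to vanish. Once the $(1,1)$ block is zeroed out, Lemma \ref{bhatia} is what lets us promote that to off-diagonal vanishing, and then the completely non-unitary hypothesis on $S_3^{(2)}$ is the final lever. Running the argument twice (for the two pairs of fundamental operators) and checking that the block-diagonal decomposition obtained from each pair is consistent is largely mechanical but must be done explicitly.
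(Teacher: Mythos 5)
Your proposal is correct and follows essentially the same route as the paper: the paper's proof writes the same block decompositions, sums the two $\rho_{G_{E(2;2;1,1)}}$ positivity conditions from Theorem \ref{s1s2}, and then explicitly defers to "the similar argument as in Theorem \ref{thm-16}" — i.e., the $z_1=\pm 1$ variation to kill the $(1,1)$ block, Lemma \ref{bhatia} to kill the off-diagonal blocks of the positive matrix, the iterated intertwining identities plus complete non-unitarity of $S_3^{(2)}$ to force $S_{12}=S_{21}=0$, and [Theorem 3.7, \cite{apal2}] for the unitary part — all of which you reconstruct, run twice for the two triples exactly as required.
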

	
	\begin{proof}
Let $\textbf{S} = (S_1, S_2, S_3, \tilde{S}_1, \tilde{S}_2)$ be a $\Gamma_{E(3; 2; 1, 2)}$-contraction. It is evident that if $S_3$ is a completely non-unitary contraction, then $\mathcal H_1 = \{0\}$. If $S_3$ is unitary, then $\mathcal H = \mathcal H_1$, which implies that $\mathcal H_2 = \{0\}$. In either of these cases, the theorem follows easily. Let us consider the case where $S_3$ is neither a unitary operator nor a completely non-unitary contraction. 	Let \begin{equation}\label{s112}
\begin{aligned}
S_i &= \begin{pmatrix}
					S_{11} ^{(i)}& S_{12}^{(i)}\\
					S_{21}^{(i)} & S_{22}^{(i)}
				\end{pmatrix},\;\;\; S_3&=\begin{pmatrix}
					S_{1}^{(3)} & 0\\
					0 & S_{2}^{(3)}
				\end{pmatrix}
				~\text{and}~
				\tilde{S}_j = \begin{pmatrix}
					\tilde{S}_{11}^{(j)}& \tilde{S}_{12}^{(j)}\\
					\tilde{S}_{21}^{(j)} & \tilde{S}_{22}^{(j)}
				\end{pmatrix}, \, 1\leq i, j \leq 2,
			\end{aligned}
		\end{equation}
with respect to the decomposition $\mathcal{H} = \mathcal{H}_1 \oplus \mathcal{H}_2$, such that $S_{1} ^{(3)}$ is unitary and $S_{2}^{(3)}$ is completely non-unitary. Since $S_iS_3=S_3S_i$ for $1\leq i \leq 2$ and $ \tilde{S}_jS_3=S_3\tilde{S}_j$ for $1\leq j \leq 2,$ we have the following:
		\begin{equation}
			\begin{aligned}
				S_{11}^{(i)}S_{1} ^{(3)}= S_{1}^{(3)}S_{11}^{(i)}, 
				S_{12}^{(i)}S_{2}^{(3)} = S_{1}^{(3)}S_{12}^{(i)},
				S_{21}^{(i)}S_{1}^{(3)} = S_{2}^{(3)}S_{21}^{(i)},
				S_{22}^{(i)}S_{2}^{(3)} = S_{2}^{(3)}S_{22}^{(3)}~{\rm{for}}~1\leq i \leq 2,
			\end{aligned}
		\end{equation}
		$${\rm{and}}$$
		\begin{equation}
			\begin{aligned}
				\tilde{S}_{11}^{(j)}S_{1} ^{(3)}= S_{1} ^{(3)}\tilde{S}_{11}^{(j)}, 
				\tilde{S}_{12}^{(j)}S_{2}^{(3)}= S_{1} ^{(3)}\tilde{S}_{12}^{(j)},
				\tilde{S}_{(21}^{(j)}S_{1} ^{(3)} = S_{2}^{(3)}\tilde{S}_{21}^{(j)}, 
				\tilde{S}_{22}^{(j)}S_{2}^{(3)}= S_{2}^{(3)}\tilde{S}_{22}^{(j)}~{\rm{for}}~1\leq j \leq 2.
			\end{aligned}
		\end{equation}
Since $\textbf{S} = (S_1, S_2, S_3, \tilde{S}_1, \tilde{S}_2)$ is a $\Gamma_{E(3; 2; 1, 2)}$-contraction,  it yields from Theorem \ref{s1s2} that 		\begin{equation}
				\begin{aligned}
					&\rho_{G_{E(2; 2; 1, 1)}}(z_1S_1, z_1\tilde{S}_2, z_1^2S_3) \geqslant 0 ~\text{and}~
					\rho_{G_{E(2; 2; 1, 1)}}(z_2\tilde{S}_2, z_2S_1, z_2^2S_3) \geqslant 0,~{\rm{for~all}}~z_1,z_2 \in \mathbb T
				\end{aligned} \end{equation}
				$${\rm{and}}$$
				\begin{equation}
				\begin{aligned}
					&\rho_{G_{E(2; 2; 1, 1)}}\left(\omega_1\frac{S_2}{2}, \omega_1\frac{\tilde{S}_1}{2}, \omega_1^2S_3\right) \geqslant 0 ~\text{and}~
					\rho_{G_{E(2; 2; 1, 1)}}\left(\omega_2\frac{\tilde{S}_1}{2}, \omega_2\frac{S_2}{2}, \omega_2^2S_3\right) \geqslant 0, ~{\rm{for~all}}~\omega_1,\omega_2 \in \mathbb T
				\end{aligned}
			\end{equation}
			and the spectral radius of $\tilde{S}_z$  and $\hat{S}_z$ are not bigger than $2$.
By using the similar argument as in Theorem \ref{thm-16}, we demonstrate that  
\begin{equation}\label{s11s12}
S_{11}^{(1)}=\tilde{S}_{11}^{(2)*}S_1^{(3)}, \|S_{11}^{(1)}\|\leq 1,S_{11}^{(2)}=\tilde{S}_{11}^{(1)*}S_1^{(3)}, \|S_{11}^{(2)}\|\leq 2,S^{(i)}_{12}=0=S^{(i)}_{21}~{\rm{and}}~\tilde{S}_{12}^{(j)}=0=\tilde{S}_{21}^{(j)}~{\rm{for}}~1\leq i,j\leq 2.
\end{equation}
It is clear from \eqref{s112} and \eqref{s11s12} that $\mathcal{H}_1, \mathcal{H}_2$ are reducing subspace of $S_1, S_2, \tilde{S}_1, \tilde{S}_2$. As $S_{1} ^{(3)}$ is unitary, it yields from [Theorem $3.7$,\cite{apal2}] and \eqref{s11s12} that $$(S_{11}^{(1)}, S_{11}^{(2)}, S_{1}^{(3)}, \tilde{S}_{11}^{(1)}, \tilde{S}_{11}^{(2)}) = (S_1|_{\mathcal{H}_1}, S_2|_{\mathcal{H}_1}, S_3|_{\mathcal{H}_1}, \tilde{S}_1|_{\mathcal{H}_1}, \tilde{S}_2|_{\mathcal{H}_1})$$ is a  $\Gamma_{E(3; 2; 1, 2)}$-unitary.  Furthermore, $(S_{22}^{(1)}, S_{22}^{(2)}, S_{2}^{(3)}, \tilde{S}_{22}^{(1)}, \tilde{S}_{22}^{(2)})$, being the restriction of a $\Gamma_{E(3; 2; 1, 2)}$-contraction $\textbf{S} = (S_1, S_2, S_3, \tilde{S}_1, \tilde{S}_2)$ to the joint invariant subspace $\mathcal{H}_2,$  is a $\Gamma_{E(3; 2; 1, 2)}$-contraction.
Since $S_{2}^{(3)}$ is completely non-unitary, we conclude that  $(S_1|_{\mathcal{H}_2}, S_2|_{\mathcal{H}_2}, S_3|_{\mathcal{H}_2}, \tilde{S}_1|_{\mathcal{H}_2}, \tilde{S}_2|_{\mathcal{H}_2})$ is  a completely non-unitary $\Gamma_{E(3; 2; 1, 2)}$-contraction.
This completes the proof.
	\end{proof}
	
	\begin{rem}
It is important to note that the Wold decompositions for $\Gamma_{E(3; 3; 1, 1, 1)}$-isometries and $\Gamma_{E(3; 2; 1, 2)}$-isometries are specific examples of the canonical decomposition of $\Gamma_{E(3; 3; 1, 1, 1)}$-contractions and $\Gamma_{E(3; 2; 1, 2)}$-contractions, respectively.	\end{rem}
	
\textsl{Acknowledgements:}
The second-named author is supported by the research project of SERB with ANRF File Number: CRG/2022/003058, by the Science and Engineering Research Board (SERB), Department of Science and Technology (DST), Government of India. 
\vskip-1cm

	%
\end{document}